\newcommand{\RN}[1]{%
	\textup{\uppercase\expandafter{\romannumeral#1}}%
}
\def\Phiplus{\Phi^+}
\def\Phiminus{\Phi^-}
\def\bp{{\bar\partial}}
\def\bfs{\boldsymbol}
\def\ms{\medskip}
\def\pa{\partial}
\def\sm{\setminus}
\def\ti{\tilde}
\def\wh{\widehat}
\def\ve{\varepsilon}
\def\id{ \mathrm{id}}
\def\SLE{ \mathrm{SLE}}
\def\Re{ \mathrm{Re}}
\def\Im{ \mathrm{Im}}
\DeclareMathOperator{\Sing}{Sing}
\def\FF{\mathcal{F}}
\def\LL{\mathcal{L}}
\def\OO{\mathcal{O}}
\def\XX{\mathcal{X}}
\def\YY{\mathcal{Y}}
\def\CC{\mathcal{C}}
\def\C{\mathbb{C}}
\def\E{\mathbf{E}}
\def\H{\mathbb{H}}
\def\P{\mathbf{P}}
\def\R{\mathbb{R}}
\def\S{\mathbb{S}}
\def\A{\mathbb{A}}
\def\T{\mathbb{T}}
\def\q{\boldsymbol{q}}
\theoremstyle{plain}
\newtheorem*{thm*}{Theorem}
\newtheorem{thm}{Theorem}[section]
\newtheorem{lem}[thm]{Lemma}
\newtheorem{cor}[thm]{Corollary}
\newtheorem{prop}[thm]{Proposition}
\theoremstyle{definition}
\newtheorem*{eg*}{Example}
\newtheorem*{egs*}{Examples}
\newtheorem*{def*}{Definition}
\theoremstyle{remark}
\newtheorem*{rmk*}{Remark}
\newtheorem*{rmks*}{Remarks}
\newcounter{tmp}
\numberwithin{equation}{section}
\begin{document}
\title[Conformal field theory for annulus SLE]{Conformal field theory for annulus SLE: \\ partition functions and martingale-observables}

\author{Sung-Soo Byun}
\author{Nam-Gyu Kang}
\author{Hee-Joon Tak}

 \address{\noindent Department of Mathematical Sciences, Seoul National University,\newline Seoul, 151-747, Republic of Korea}

\email{sungsoobyun@snu.ac.kr}

\address{School of Mathematics, Korea Institute for Advanced Study, \newline 
	Seoul, 02455, Republic of Korea }

\email{namgyu@kias.re.kr}

\address{Department of Mathematical Sciences, Seoul National University,\newline Seoul, 151-747, Republic of Korea}

\email{tdd502@snu.ac.kr}

\begin{abstract}
We implement a version of conformal field theory in a doubly connected domain to connect it to the theory of annulus SLE of various types, including the standard annulus SLE, the reversible annulus SLE, and the annulus SLE with several force points. 
This implementation considers the statistical fields generated under the OPE multiplication by the Gaussian free field and its central/background charge modifications with a weighted combination of Dirichlet and excursion-reflected boundary conditions. 
We derive the Eguchi-Ooguri version of Ward's equations and Belavin–Polyakov–Zamolodchikov equations for those statistical fields and use them to show that the correlations of fields in the OPE family under the insertion of the one-leg operators are martingale-observables for variants of annulus SLEs.
We find Coulomb gas (Dotsenko-Fateev integral) solutions to the parabolic partial differential equations for partition functions of conformal field theory for the reversible annulus SLE. 

\end{abstract}

\thanks{
The authors were partially supported by Samsung Science and Technology Foundation (SSTF-BA1401-51). Furthermore, Sung-Soo Byun was partially supported by the DFG-NRF through the IRTG 2235. Nam-Gyu Kang was partially supported by a KIAS Individual Grant (MG058103) at Korea Institute for Advanced Study.
}
\keywords{Conformal field theory, annulus SLE, Gaussian free field, partition functions, martingale-observables}
\subjclass[2020]{Primary 60J67, 81T40; Secondary 30C35}

\maketitle
\tableofcontents

\section{Introduction}

\indent Since Schramm introduced Schramm-Loewner evolution (SLE) in a simply connected domain \cite{MR1776084}, SLE theory has successfully produced rigorous proofs of several remarkable conjectures in statistical physics over the past two decades. 
Examples include the proofs of conformal invariance of scaling limits of some important critical lattice models by employing SLE. 
Such results have been obtained by the fundamental fact that certain discrete observables converge to conformally covariant martingale processes. 
For instance, in \cite{MR1851632}, Smirnov used Cardy's observables to prove conformal invariance of scaling limit of critical site percolation in a hexagonal lattice. 
Schramm and Sheffield made use of a certain bosonic observable to prove that the level lines of discrete Gaussian free field (GFF) with specific height gaps converge to SLE(4), see \cite{MR2486487}. 
Chelkak, Duminil-Copin, Hongler, Kemppaninen, and Smirnov proved the existence of scaling limit of FK Ising model interfaces and its conformal invariance utilizing certain parafermionic observables, see \cite{MR3151886} and references therein. 
In addition, some geometric properties of SLE curves have been obtained through \emph{SLE martingale-observables}. 
Examples include the study of Hausdorff dimensions (Beffara's observables) \cite{MR2435854}, and left passage probabilities (Schramm's observables) \cite{MR1871700} of SLE.

Beyond the SLE theory in a simply connected domain, there have been several developments of the theory in a multiply connected domain \cite{lawler2011defining} that contains richer geometric structures. 
In a doubly connected domain, Zhan has made significant results on basic properties of SLE, see e.g., \cite{zhan2004random,zhan2004stochastic,MR3334276}. 
In particular, in \cite{MR3334276}, he considered the annulus $\SLE(\kappa,\Lambda)$ with two marked points and introduced specific parabolic PDE (known as the \emph{null-vector equation} in the physics literature) of $Z$ that provides a necessary condition for which the associated SLE trace is reversible, cf. \cite{MR3548530,MR2435856}. 
Here $\Lambda := \kappa (\log Z)'$ is the drift function of the driving process of SLE, where the function $Z$ is called \emph{annulus SLE partition function}. 
We remark that such an equation for some special cases with $\kappa=3,6$ also appeared in the literature \cite{dubedat2004critical,MR3602850} in connection with the study of discrete lattice models. 

In the physics literature, in particular, in the context of conformal field theory (CFT) \cite{MR1424041}, it is well known that under the insertion of the ratio of the \emph{one-leg operator} $\Psi$ to its correlation function $\E\Psi$, the correlators in a particular class of fields are martingale-observables for SLEs. 
In a doubly connected domain, Hagendorf, Bernard, and Bauer used a certain bosonic observable to propose the partition functions for annulus SLE with $\kappa=4$ associated with GFFs, see \cite{MR2651436}. 
Hagendorf presented physical arguments on producing martingale-observables for the Dirichlet boundary case in \cite{1742-5468-2009-02-P02033}.

In this paper, we use the framework built in \cite{MR3052311,KM17,AKM1,KM} to construct a version of CFT in a doubly connected domain with a plan to serve as a solid cornerstone to develop a theory in a general multiply connected domain. 
The scope of this paper includes Coulomb gas formalism and derivations of some important equations in CFT, such as the Eguchi-Ooguri version of Ward's equations and Belavin-Polyakov-Zamolodchikov (BPZ) equations. 

The simplest fields we consider in this paper form a one-parameter family of GFFs with a \emph{weighted combination} of Dirichlet and excursion reflected (ER) boundary conditions, see Figure~\ref{Fig_DGFF}. 
In the analysis literature, the Green's function with ER boundary condition is often called the \emph{modified Green's function} \cite{koebe1916abhandlungen}.
In probability theory, the associated stochastic process, ER Brownian motion, was introduced by Lawler and Drenning \cite{MR2992562,lawler2006laplacian}. 
As a characteristic feature of the ER Green's function, its normal derivatives on the inner boundary component have a vanishing mean, see e.g., \cite{MR2241034,MR2334725}. 
By combining this property with zero Dirichlet boundary condition under the (rational) weight, we introduce a one-parameter family of the Green's functions interpolating Dirichlet Green's function with ER Green's function and consider the associated GFFs. 
	\begin{figure}[h!]
	\begin{center}
		\begin{subfigure}[h]{0.48\textwidth}
			\centering
			\includegraphics[width=0.8\textwidth,height=0.8\textwidth]{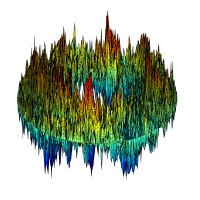}
			\caption{ER}
		\end{subfigure} 
		\begin{subfigure}[h]{0.48\textwidth}
		 	\centering
		 \includegraphics[width=0.8\textwidth,height=0.8\textwidth]{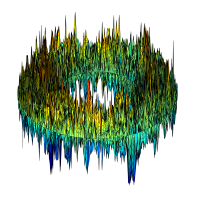}
			\caption{Dirichlet}
		\end{subfigure}
	\end{center}
	\caption{Discrete GFFs with ER and Dirichlet boundary conditions}\label{Fig_DGFF}
\end{figure}

One of the primary motivations for introducing such general boundary conditions is that this interpolation gives rise to infinitely many linearly independent solutions to BPZ equation. 
Our observation affirms the prediction that contrary to a simply connected domain where BPZ equation has a finite-dimensional solution space \cite{MR3294954,MR3294955,MR3296159,MR3296160}, the solution space is no longer finite-dimensional in a doubly connected domain. 
We also refer to \cite{MR4046010,MR4071342,kytola2016pure} for the analysis of solutions to various equations arising from CFT with a concrete understanding of the underlying algebraic structure.

The statistical fields satisfying BPZ equations, thus producing SLE martingale-observables form the OPE family $\FF_{\bfs \beta} \equiv \mathcal{F}_{(b),\bfs \beta}$ generated by central/background charge modifications $\Phi_{\bfs \beta} \equiv \Phi_{(b),\bfs \beta}$ of the GFFs under OPE multiplication. 
Here, $b$ is a real parameter related to the central charge $c$ as $c=1-12b^2$ and the SLE parameter $\kappa$ as $b = \sqrt{\kappa/8}-\sqrt{2/\kappa}$. 
We require that for a divisor $\bfs\beta = \sum \beta_j\cdot q_j$ called a background charge, the total sum of the charges should vanish, i.e., $\sum \beta_j=0$. 
On a complex torus (of genus one), this requirement is known as the \emph{neutrality condition} ($\mathrm{NC}_0$) in the physics literature, see e.g., \cite{MR1424041}. 
To derive BPZ equations, we need two main ingredients, the first is \emph{Ward's equation} and the second is the construction of the one-leg operator $\Psi$ satisfying \emph{level two degeneracy equation}.

Whereas Ward's equations in a simply connected domain describe the Virasoro fields in terms of Lie derivative operators within correlations, the statement of those in a doubly connected domain contains the derivative of correlation functions with respect to the modular (Teichm\"uller) parameter as well. 
These forms of Ward's equations are well known in the physics literature, e.g., Ward's equations in a doubly connected domain \cite{MR2651436} and Eguchi-Ooguri's version of Ward's equations on a complex torus of genus one \cite{MR873031}. 
We derive them not from the path integral formalism but the functional equations of classical special functions such as pseudo-addition theorem of Weierstrass zeta function. 
See \cite{KM17} for a mathematical derivation of Eguchi-Ooguri's version of Ward's equations.

For given another divisor $\bfs{\tau}=\sum_j \tau_j \cdot \xi_j$ satisfying $a+\int \bfs \tau=0$, we construct the one-leg operator $\Psi_{\bfs\beta}\equiv\Psi_{\bfs\beta}(p,\bfs \xi)$ from the GFF as the modified multi-vertex field (or the OPE exponential) with the charge $a=\sqrt{2/\kappa}$ at given marked point $p$. 
With this specific choice of the charge at $p$, the one-leg operator $\Psi_{\bfs\beta}$ satisfies the desired level two degeneracy equation. 
The correlation function $\E \Psi_{\bfs\beta}$ of the one-leg operator (boundary condition changing operator) is called the \emph{partition function} in CFT (up to a multiplicative constant depending only on a domain), see e.g., \cite[Section 11.3]{MR1424041}. The partition functions have different descriptions in other contexts, such as random matrix theory, statistical mechanics, and SLE theory. For example, Dub\'{e}dat introduced the partition functions of SLE and the partition functions of the free field with associated boundary data and then showed that these two definitions coincide, see \cite[Theorem~5.3]{MR2525778}. Unless stated otherwise, the partition function means the correlation function of the one-leg operator throughout the paper. 
In some typical examples, we relate the partition function in CFT with the partition of the associated lattice model.

We first study the annulus $\SLE(\kappa,\Lambda)$ with $\Lambda \equiv \Lambda_{\bfs\beta}(p,\bfs \xi) = \kappa \,\pa_\xi|_{\xi=p} \log \E\,\Psi_{\bfs\beta}(\xi,\bfs \xi)$, see \cite{KM} for this type of SLE in a simply connected domain from a viewpoint of Radon-Nikodym derivative (or coordinate change \cite{MR2188260}) between the laws of SLEs. 
Here, the marked boundary points $p, \bfs{q}, \bfs{\xi}$ play different roles: the point $p$ corresponds to the starting point of $\SLE(\kappa,\Lambda)$, whereas $\bfs \xi$ (resp., $\bfs q$) are force points having non-trivial (resp., trivial) conformal dimensions. 
We consider a collection of all correlation functions of fields in the OPE family $\FF_{\bfs \beta}$ under the insertion of the properly normalized one-leg operator. 
Then employing BPZ-Cardy equations, we show that this collection forms martingale-observables for $\SLE(\kappa,\Lambda)$. 
In the case of Dirichlet boundary condition, this statement generalizes a result of Izyurov and Kyt\"{o}l\"{a} in \cite{MR3010393}, where they showed that 1-and 2-point functions of GFF with piecewise Dirichlet boundary condition are martingale-observables for associated $\SLE(4,\Lambda)$. 
Here, the neutrality condition coincides with the requirement that jumps of the 1-point function on the boundary should add up to zero. 

However, the correlation function $Z_{\bfs\beta}:=\E\,\Psi_{\bfs\beta}$ of this type of $\SLE(\kappa,\Lambda)$ with one force point does not satisfies the null-vector equation (see \eqref{Lawler Zhan null} in Subsection~\ref{Main result sub}) for the reversible annulus SLE partition function (introduced in \cite{MR3334276}) unless $\kappa=4$. 
To find explicit solutions to the null-vector equation of this type for generic $\kappa>0$ and construct a class of $\SLE(\kappa,\Lambda)$ martingale-observables associated with these solutions, we apply the \emph{method of screening} (also known as the \emph{Coulomb gas} or \emph{Dotsenko-Fateev} integrals). 
If $\kappa > 4$, we can deform a Pochhammer contour in this methodology so that the solution can be represented in terms of the Euler type integral. 
In the case $\kappa \le 4$, we perform proper analysis on an analytic continuation (with respect to $\kappa$) of the Pochhammer contour integral to obtain a non-trivial real-valued solution. 

Beyond the method of screening, we present further constructions of SLE martingale-observables, which include the implementations of non-atomic background charges and the use of periodization (or chiral bosonization) of conformal fields.
These methods allow us to construct martingale-observables for continuum interface curves in statistical physics such as discrete GFF, loop-erased random walk, and critical Ising interfaces. 
Moreover, we derive some geometric properties such as hitting/left passage probabilities of certain annulus SLEs.

\section{Main results}

\begingroup
\setcounter{tmp}{\value{thm}}
\setcounter{thm}{0} 
\renewcommand\thethm{\Alph{thm}}

\subsection{Notation}

Throughout this paper, we use the following symbols and notations: let $\T= \{ z \in \C : |z|=1 \}$; for $r>0$ let $\mathbb A_r = \{z \in \C: e^{-r} < |z| <1 \}$, $\T_r= \{ z \in \C : |z|=e^{-r} \}$, $\S_r = \{z \in \C: 0 <\Im\,z < r \}$, and $\R_r= \{ z\in \C: \Im\,z=r \}$. 
We denote by $\mathcal C_r$ the cylinder $\S_r / \langle z \mapsto z+2 \pi \rangle$, and by $D_r$ a doubly connected domain with modulus $r$. 
A subset $K$ of a doubly connected domain $D$ is called a hull in $D$ if $D\sm K$ is a doubly connected domain and $K$ has positive distance from one boundary component of $D$. 
For any hull $K$ of $D$, let us define $\text{cap}_D(K) := \text{mod}(D)- \text{mod} (D \sm K)$ as the capacity of $K$ in $D$, where $\text{mod}(\cdot)$ denotes the modulus of a doubly connected domain. 
Following \cite{zhan2004stochastic,MR3334276}, we write $\Theta(r,z):=\theta (\frac{z}{2\pi},\frac{ir}{\pi} )$, where $\theta$ is a Jacobi theta function, see e.g., \cite[p.63]{MR808396}. 
In Subsection~\ref{Special_sub} we recall the definition of $\Theta$ and its basic properties. 
It is convenient to introduce 
$$
\Theta_{\chi}(r,z):=\Theta(r,z)\,\exp\Big( \frac{z^2}{4(r+\chi)} \Big) , \qquad \chi \in [0,\infty]
$$ 
to describe a one-parameter family of GFFs with a \emph{weighted combination} of Dirichlet and excursion reflected (ER) boundary conditions.
We write $\zeta_r$ for the Weierstrass zeta function with basic periods $(2\pi,2ir)$. 
In the sequel we sometimes omit the modular parameter $r$ and write for instance $\Theta_\chi(\cdot)\equiv\Theta_\chi(r,\cdot)$.

\subsection{Basic setup}
Let $\Phi_{(0)}$ be the GFF in $D_r$ with a weighted combination of Dirichlet and ER boundary conditions, see Subsection~\ref{Cor ftns_Sub} for details. Its 2-point function is given by
$$
\E\,\Phi_{(0)}(\zeta)\Phi_{(0)}(z) = 2\log\Big| \dfrac{\Theta_\chi(r,\zeta-\bar{z})}{\Theta_\chi(r,\zeta-z)}\Big|
$$
in the $\mathcal{C}_r$-uniformization.
For a real parameter $b$, we define the \emph{central charge modification} $\Phi_{(b)}$ of GFF by adding a non-random pre-pre-Schwarzian (PPS) form as follows:
\begin{equation*}
	\Phi_{(b)}:=\Phi_{(0)}-2b\arg w',
\end{equation*}
where $w$ is a conformal map from $D_r$ onto $\mathcal C_r$. 
See Subsection~\ref{Conformal Subsec} for the definition of PPS forms. 
We remark that $\arg w^{\prime}$ does not depend on the choice of a conformal map.

Given marked points $\q=\{ q_k\}$, we call a divisor $\bfs{\beta}= \sum_k \beta_k \cdot q_k$ a \emph{background charge} if it satisfies the neutrality condition $\int \bfs{\beta}=0$. 
We define the \emph{background charge modification} $\Phi_{ \bfs{\beta} } \equiv \Phi_{ \bfs{\beta},(b) }$ associated with $\bfs\beta$ as 
\begin{equation*}
\Phi_{ \bfs{\beta} }:= \Phi_{(b)}+\sum \beta_k \arg \Big\{ \Theta_\chi(w_k-w_z) \Theta_\chi(\bar{w}_k-w_z) \Big\},
\end{equation*}
where $w_z=w(z)$ and $w_k=w(q_k)$. 
Let us denote by $\FF_{\bfs\beta} \equiv \FF_{\bfs\beta,(b)}$ the OPE family of the $\Phi_{\bfs\beta}$, the algebra (over $\C$) spanned by the generators $1$, mixed derivatives of $\Phi_{\bfs\beta}$, those of OPE exponentials $e^{\ast \alpha \Phi_{\bfs\beta}}$ ($\alpha \in \C$), and their holomorphic/anti-holomorphic/mixed parts (with certain neutrality conditions) under the OPE multiplication $\ast$. 
We present a more precise definition for OPE exponentials in the following subsection.

We now recall the (covering) annulus SLE with $N$-force points $q_1,\cdots q_N$. 
Like the SLE in a simply connected domain, the annulus $\SLE(\kappa,\Lambda)$ can be described by the associated Loewner's differential equation \cite{zhan2004stochastic}.
Let $\xi$ be a (real-valued) continuous function defined on $[0,T)$ for some $0< T <r$. 
By definition, the Loewner kernel $S(r,z)$ on $\A_r=\{z: e^{-r}<|z|<1\}$ is given by
$$
S(r,z):=\lim_{N \to \infty}\sum_{n=-N}^{N} \frac{e^{2nr}+z}{e^{2nr}-z}.
$$
For each $z \in \A_r$, let $g_t(z)$ be the solution (which exists up to the first time $\tau_z\in(0,T]$ that $g_t(z)$ hits $\xi_t$) of the annulus Loewner equation 
\begin{equation*}
	\displaystyle	\pa_t g_t(z) =g_t(z) S(r-t, g_t(z)e^{-i\xi_t}), \qquad g_0(z)=z.
\end{equation*} 
For each time $t \in [0,T)$, let $K_t := \{ z \in \A_r : \tau_z\leq t\}$. 
Then $K_t$ is a hull in $\A_r$ with capacity $t$. 
Moreover, $g_t$ is a conformal map from $(\A_r \sm K_t,\T_r)$ onto $(\A_{r-t},\T_{r-t})$. 

It is convenient to describe the annulus SLE in the covering space $\S_r$, where the Loewner kernel $H$ is expressed in terms of $\Theta$ as 
$$
H(r,z):=-i \, S(r,e^{iz})=2 \, \pa_z \log \Theta(r,z).
$$
For each $z\in \S_r$, we denote by $\ti{g}_t(z)$ the solution of the equation
\begin{align*}
	\pa_t \ti{g}_t (z)=H(r-t,\ti{g}_t(z)-\xi_t), \qquad \ti{g}_0(z)=z,
\end{align*}
and set $\ti{K}_t := \{ z \in \S_r : \tau_z\leq t\}$. 
Then $\ti{g}_t$ is a conformal map from $(\S_r \sm \ti{K}_t; \R_r)$ onto $(\S_{r-t};\R_{r-t})$. 
Moreover for each $z \in \S_r \sm \ti{K}_t$, $e^{i\ti{g}_t(z)}=g_t(e^{iz})$, and $K_t=\{ e^{iz}\in \A_r : z\in \ti{K}_t \}$. 
In particular, $\tilde{K}_t$ is $2\pi$-periodic and for each $n \in \mathbb{Z}$, we have $\tilde{g}_t(z+2n\pi)=\tilde{g}_t(z)+2n\pi$. 
We call that $\xi$ generates a trace $\gamma$ if 
$\gamma_t=\gamma(t):=\lim_{z \to \xi_t} \ti{g}_t^{-1}(z)$. 
It was proved by Zhan in \cite{zhan2004stochastic} that $\xi_t:=\sqrt{\kappa}\,B_t$ ($t<r$) a.s. generates continuous trace, where $B_t$ is a standard one-dimensional Brownian motion. 

Suppose $\Lambda$ is a real-valued $C^{0,1}$ function on $(0,\infty)\times (\C \sm \{ z \in 2\pi\mathbb{Z} +2ir\mathbb{Z} \})^N$. Let $\xi_t$ be the solution, (which exists up to the first time $T$ such that $\xi_t-\tilde{g}_t(q_j) \in 2\pi \mathbb{Z}$,) of the SDE
\begin{equation*}
	d\xi_t =\sqrt{\kappa}\,dB_t +\Lambda (r-t,\xi_t-\ti{g}_t(q_1),\cdots,\xi_t- \ti{g}_t(q_N) )	\, dt, \qquad \xi_0=p.
\end{equation*}
Applying the Girsanov theorem, it can be shown that up to time $T$, $\xi_t$ a.s. generates continuous trace, see \cite[Section 3.2]{MR3334276} for further details. 
We call the trace generated by $\xi_t$ the (covering) annulus $\SLE(\kappa,\Lambda)$ trace started from $p$ with force points $q_1.\cdots,q_N$. 

Now we recall the notion of SLE martingale-observables. 
A non-random (conformal) field $f$ in a planar domain $D$ (or on a Riemann surface in general) is an assignment of smooth function
$ ( f \|\phi ) : \phi \, U \to \C$
for each local chart $\phi$. 
By definition, a non-random field $M$ is a \emph{martingale-observable} for annulus $\text{SLE}(\kappa,\Lambda)$ if for any $z_1, \cdots, z_n$, the process
$$
M_t(z_1, \cdots, z_n):=(M \| \ti{g}_t^{-1} )(z_1, \cdots, z_n), \qquad (t<T)
$$
(stopped when any $z_j$ or any $q_k$ is swallowed by the Loewner hull $\tilde{K}_t$) is a local martingale on the annulus SLE probability space.

\subsection{Statement of main results} \label{Main result sub}
For a smooth vector field $v$, let us denote by $\mathcal L^+_v$ (resp., $\mathcal L^-_v$) the $\C$-linear (resp., anti $\C$-linear) part of Lie derivative operator $\mathcal L_v$ with respect to $v$, i.e., $\mathcal L_v^\pm := (\mathcal L_v \mp i \mathcal L_{iv})/2$. 
We define a \emph{stress tensor} $A_{\bfs\beta} \equiv A_{\bfs\beta,(b)}$ in terms of the current fields $J_{(0)}:=\pa \Phi_{(0)}$ and $J_{\bfs\beta}=\pa \Phi_{\bfs\beta}$ by
$$
A_{\bfs\beta}:= -\frac{1}{2}J_{(0)}\odot J_{(0)}+\Big(ib\, \pa-\E J_{\bfs\beta} \Big)J_{(0)}, 
$$
where $\odot$ denotes the Wick's product. 
The Virasoro field $T_{\bfs\beta} \equiv T_{\bfs\beta,(b)}$ is given by
$$
T_{ \bfs{\beta} } :=-\frac{1}{2}J_{ \bfs{\beta} } \ast J_{ \bfs{\beta} }+ib\,\pa J_{ \bfs{\beta} }=A_{\bfs\beta}+\E T_{ \bfs{\beta} }. 
$$
See Subsection~~\ref{Subsec b.c.m.} for more details. 

We derive the following form of Ward's equation.
\begin{thm} \label{Thm_Ward}
	For any string $\XX$ of fields in the OPE family $\FF_{\bfs\beta}$, we have
	$$ 
	2\,\E A_{\bfs\beta}(\zeta) \XX =\Big(\LL_{v_\zeta}^+ +\LL_{v_{\bar{\zeta}}}^- \Big)\E \XX +\pa_r \E \XX,
	$$ 
	where all fields are evaluated in the identity chart of $\mathcal C_r$ and the Loewner vector field $v_{\zeta}$ is given by
	$$
	(v_\zeta \| \id_{\bar{\mathcal C}_r})(z):=H(r,\zeta-z).
	$$
\end{thm}

\begin{eg*}[Hadamard's variational formula] As a simplest non-trivial example of Theorem~\ref{Thm_Ward}, let us consider the case $\bfs \beta =\bfs 0$ and $\XX=\Phi(z_1)\Phi(z_2)$. Then Ward's equation is equivalent to the following functional relation of the Green's function $G$:
\begin{equation}
4 \, \pa_{\zeta} G(\zeta,z_1) \pa_\zeta G(\zeta,z_2)+\Big( v_\zeta(z_1)\pa_{z_1}+ v_\zeta(z_2)\pa_{z_2}+ v_{\zeta}(\bar{z}_1) \bp_{z_1} + v_{\zeta}(\bar{z}_2) \bp_{z_2}+\pa_r \Big) G(z_1,z_2)=0.
\end{equation}
Letting $\zeta \to p \in \R$, this equation gives rise to Hadamard's variational formula for annulus Loewner chains:
\begin{equation*}
\frac{d}{dt}G_{\Omega_t}( z_1,z_2 )\Big|_{t=0}=-P(z_1) P(z_2), \qquad P(z):=\frac{\pa}{\pa n}\Big|_{\zeta=p} G(\zeta,z),
\end{equation*} 
where $\Omega_t:=\CC_r \backslash \tilde{K}_t$. 
Such a variational formula was studied by Izyurov and Kyt\"{o}l\"{a} \cite{MR3010393}. 
We also refer to \cite{MR3201917,MR2169966} for variants of Hadamard's formula in a planar domain. 
\end{eg*}

We define the one-leg operator $\Psi$ in terms of a modified multi-vertex field (OPE exponential) below. For this purpose, it is convenient to introduce the formal (1-point) bosonic fields $\Phi_{(0)}^{\pm}$ in $D_r$. 
Although they are not Fock space fields, they can be interpreted as the ``holomorphic part" and the ``anti-holomorphic part" of the GFF $\Phi_{(0)}$ in the following sense: 
$\Phi_{(0)}=\Phiplus_{(0)}+\Phiminus_{(0)}$, $\Phiminus_{(0)}=\overline{\Phiplus_{(0)}}$. 
Given divisors $\bfs{\sigma}=\sum_{j=1}^n \sigma_j \cdot z_j$, $\bfs{\sigma_*}=\sum_{j=1}^n \sigma_{*j} \cdot z_j$, 
we set
$$
\Phi_{(0)}[\bfs{{\sigma}},\bfs{{\sigma_*}}] := \sum_{j=1}^n \Big( \sigma_j\Phiplus_{(0)}(z_j)-\sigma_{*j}\Phiminus_{(0)}(z_j) \Big). 
$$
Then $\Phi_{(0)}[\bfs{{\sigma}},\bfs{{\sigma_*}}]$ is a well-defined Fock space field if and only if the following neutrality condition $(\mathrm{NC}_0)$ holds:
$\sum_{j=1}^n (\sigma_j+\sigma_{*j} )=0$. 

Given a double divisor $(\bfs{{\sigma}},\bfs{{\sigma_*}})$ for $z_j \in \CC_r $ satisfying the neutrality condition $(\mathrm{NC}_0)$, we define the \emph{modified multi-vertex field (OPE exponentials)} $\OO_{\bfs\beta}{[\bfs{\sigma},\bfs{\sigma_*}]}\equiv \OO_{\bfs{\beta},(b)}{[\bfs{\sigma},\bfs{\sigma_*}]}$ by
\begin{equation*} 
\OO_{\bfs{\beta}}{[\bfs{\sigma},\bfs{\sigma_*}]} := \frac{ C_{(b)}[\bfs{\sigma}+\bfs{\beta}/2,\bfs{\sigma_*}+\bfs{\beta}/2] }{ C_{(b)}[\bfs{\beta}/2,\bfs{\beta}/2] } 
\, e^{\odot i\Phi_{(0)}[\bfs{{\sigma}},\bfs{\sigma_*}]}. \end{equation*}
(The reason for the terminology ``OPE exponentials" will become clear due to Proposition~\ref{V=O}.)
Here the Coulomb gas correlation function $C_{(b)}{[\bfs{\sigma},\bfs{\sigma_*}]}$ is a differential of conformal dimensions $(\lambda_j,\lambda_{*j})= ( \tfrac12 \, \sigma_j^2 -b\, \sigma_j \, , \, \tfrac12\, \sigma_{*j}^2 -b \, \sigma_{*j}) $ at each $z_j$ and its evaluation in the identity chart of $\CC_r$ is given by 
\begin{align*}
C_{(b)}{[\bfs{\sigma},\bfs{\sigma_*}]}=	&\,\Theta'(0)^{\frac12\sum_j (\sigma_j^2+\sigma_{*j}^2)} \exp \Big( -\frac{ (\sum \sigma_j z_j+\sigma_{*j} \bar{z}_j )^2 }{4(r+\chi)} \Big) \prod_j \Theta(z_j-\bar{z}_j )^{\sigma_j\sigma_{*j}} 
\\
\times& \prod_{j<k} \Theta(z_j-z_k)^{\sigma_j\sigma_k}\Theta(\bar{z}_{j}-z_k)^{\sigma_{*j}\sigma_k} \Theta(z_j-\bar{z}_k)^{\sigma_j\sigma_{*k}}\Theta(\bar{z}_j-\bar{z}_k)^{\sigma_{*j}\sigma_{*k}}.
\end{align*}
Given a divisor $\bfs{\tau}= \sum \tau_j \cdot \xi_j$ satisfying $ a+\sum \tau_j =0$, we define the \emph{one-leg operator} $\Psi$ by 
$$
\Psi(z) \big(\equiv \Psi_{\bfs{\beta}}(z, \bfs{z})\equiv \Psi_{\bfs{\beta}}[ a \cdot z+ \bfs{\tau} ] \, \big)  :=\OO_{\bfs\beta} [ a\cdot z + \tfrac12 \bfs{\tau} \, , \, \tfrac12 \bfs{\tau} ]. 
$$
Then the following form of BPZ equation holds.

\begin{thm} \label{BPZ_ER}
	Suppose that $2a(a+b)=1$. Then for any $\XX \in \FF_{\bfs\beta}$, we have
\begin{equation}
\frac{1}{a^2}\pa^2_{z}\E \Psi(z) \XX = \Big( \LL_{v_{z}}^++\LL_{v_{\bar{z}}}^-+\pa_r+2h_{1,2} \frac{\zeta_r(\pi)}{\pi}+2\E T_{\bfs\beta}(z) \Big) \E \Psi(z) \XX,
\end{equation}
where all fields are evaluated in the identity chart of $\mathcal C_r$.
Here, Lie derivative operators do not apply to $z$ and $h_{1,2}=a^2/2-ab$. 
\end{thm}

To connect CFT with SLE theory, we choose real parameters $a$ and $b$ in terms of SLE parameter $\kappa$ as 
\begin{equation*} \label{a,b,kappa}
	a=\sqrt{2/\kappa}, \qquad b=\sqrt{\kappa/8}-\sqrt{2/\kappa}.
\end{equation*} 
Now we consider $\SLE(\kappa,\Lambda)$, where $\Lambda$ is defined by 
\begin{equation*} \label{Lambda_main}
	\Lambda(r,p,\bfs{\xi}):=\kappa \frac{ \pa_{\xi}|_{\xi=p} \E \Psi_{\bfs\beta}(\xi,\bfs{\xi}) }{Z_{\bfs\beta}(p,\bfs{\xi})}, \qquad Z_{\bfs\beta}(p,\bfs{\xi}):=\E \Psi_{\bfs\beta}(p,\bfs{\xi}).
\end{equation*} 

The insertion of the one-leg operator produces an operator that changes the values of Fock space fields. 
See \cite[Section~2.3]{MR3052311} for its probabilistic counterpart: insertion of Wick's exponential $e^{\odot \Phi(f)}$ of the GFF in application to a test function $f$ results in the change of the law of $\Phi(f)$ in terms of the Green potential. 
We now consider the following insertion procedure:
\begin{equation*} \label{modi cor_main}
	\wh{\E}[\mathcal X]:=\frac{\E \Psi_{ \bfs{\beta} }(p , \bfs{\xi})\XX }{ Z_{\bfs\beta}(p,\bfs{\xi}) }.
\end{equation*}
By the BPZ-Cardy type equations (Proposition~\ref{BPZ-C_ER}), we construct a family of SLE martingale-observables.

\begin{thm} \label{main}
	For any string $\XX$ of fields in the OPE family $\FF_{\bfs\beta}$, a non-random field 
	$M=\wh{\E}[\XX]$ is a martingale-observable for $\SLE(\kappa,\Lambda)$. 
\end{thm}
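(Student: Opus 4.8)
The plan is to combine the Ward equation (Theorem~\ref{Ward eq_main}) with the level-two degeneracy of the one-leg operator and It\^o's formula. Fix a string $\XX$ of fields in $\FF_{(b)}$, and consider the process $M_t = \wh{\E}[\XX]_t$ obtained by transporting everything to the shrinking cylinder $\CC_{r-t}$ via the covering Loewner map $\ti w_t = \ti g_t - \xi_t$. The goal is to show $dM_t$ has vanishing drift. First I would write $M_t$ explicitly as a ratio whose numerator is $\E[\Psi_{\bfs\beta}(\xi_t, \ti g_t(\bfs q))\,\XX_t]$ (with all fields pushed forward to the identity chart of $\CC_{r-t}$) and whose denominator is the partition function $\E[\Psi_{\bfs\beta}(\xi_t, \ti g_t(\bfs q))] = Z_{\bfs\beta}(r-t, \xi_t, \ti g_t(\bfs q))$. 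Since $\Psi$ is itself a multi-vertex field in $\FF_{(b)}$, the product $\Psi \XX$ lies in the OPE family too, so both numerator and denominator are correlation functions to which Theorem~\ref{Ward eq_main} applies.

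The key computation is It\^o's formula applied to $M_t$. The map $\ti w_t$ evolves by the Loewner equation, so the time derivative of any correlator $\E[\YY_t]$ of a string $\YY$ splits into two pieces: a deterministic part coming from the flow of the marked points and the modulus $r-t$, which is exactly the Lie-derivative-plus-$\pa_r$ combination appearing on the right side of Ward's equation, and a stochastic part coming from $d\xi_t = \sqrt\kappa\,dB_t + \Lambda\,dt$. The drift contribution from the flow is, by Theorem~\ref{Ward eq_main}, governed by $2\E[A_{(b)}(\zeta)\YY]$ as $\zeta \to \xi_t$; combined with the It\^o correction $\tfrac\kappa2 \pa_\xi^2$ from the quadratic variation of $\sqrt\kappa B_t$ and the first-order drift $\Lambda\,\pa_\xi$, the total drift of the \emph{numerator} correlator acquires the action of the second-order differential operator $\tfrac\kappa2 \pa_\xi^2 + \Lambda\,\pa_\xi + \pa_r + (\text{Lie terms})$. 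The point of the charge choice $a = \sqrt{2/\kappa}$ at the growth point $p$ is that it makes $\Psi_{\bfs\beta}$ degenerate at level two, which is precisely the algebraic identity that relates $2A_{(b)}(\xi_t)$ inside the correlator to the second-order driving operator, so that this operator annihilates the inserted one-leg field up to the normalizing denominator. Equivalently, the neutrality condition and the definition $\Lambda = \kappa\,\pa_\xi \log Z_{\bfs\beta}$ are exactly what is needed so that the drift of the ratio $M_t$ cancels.

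Concretely, after applying It\^o to the quotient and using $\Lambda = \kappa Z_{\bfs\beta}^{-1}\pa_\xi Z_{\bfs\beta}$, the drift of $M_t$ collects into a single expression of the form $Z_{\bfs\beta}^{-1}\big(\tfrac\kappa2\pa_\xi^2 + \Lambda\pa_\xi + \pa_r + \LL\big)$ acting on numerator and denominator; the Ward equation converts the Virasoro-generator part into the insertion of $2A_{(b)}(\xi_t)$, and the level-two null-vector relation for $\Psi$ shows this combination vanishes identically. Hence $M_t$ has no $dt$ term and is a local martingale. I would carry this out first for the Dirichlet case, where $\wh\E$ is the genuine ratio above, and then indicate that the ER case follows by the same argument after the renormalization of $\Lambda$ and of $\wh\E$ described in Subsection~\ref{one-leg subsec}, since the renormalization only shifts the partition function by a $z$-independent (purely $r$-dependent) factor that is absorbed into $\pa_r\log Z_{\bfs\beta}$.

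The main obstacle I anticipate is twofold. The first is bookkeeping the extra $\pa_r$ term in Ward's equation together with the explicit $r$-dependence of the theta-function prefactors $C_{(b)}[\bfs\sigma,\bfs\sigma_*]$: unlike the simply connected case, the correlators depend on the modulus $r-t$ both through the conformal geometry and through the driving SDE, so the flow derivative and the modulus derivative must be disentangled carefully to match the operator on the right of Theorem~\ref{Ward eq_main}. The second, more delicate point is verifying the level-two degeneracy identity for $\Psi_{\bfs\beta}$ in the doubly connected setting, i.e. that the singular part of the OPE of $A_{(b)}$ with $\Psi_{\bfs\beta}$ at the seed point produces exactly the $\tfrac\kappa2\pa_\xi^2 + \Lambda\,\pa_\xi$ operator after imposing the neutrality condition $a + \sum_j\beta_j = 0$; this is where the pseudo-addition theorem for the Weierstrass zeta function (used to derive Ward's equation) and the precise form of $\Lambda$ enter, and it is the step that genuinely differs from the chordal/radial arguments.
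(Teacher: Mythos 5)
Your proposal follows essentially the same route as the paper: level-two degeneracy of $\Psi_{\bfs\beta}$ (via its current-primary property and the choice $2a(a+b)=1$) combined with Ward's equation yields the BPZ equation, dividing by the partition function with $\Lambda=\kappa\,\pa_\xi\log Z_{\bfs\beta}$ gives the BPZ--Cardy equation, and It\^o's formula applied to $M_t=m(r-t,\xi_t,\ti g_t(\bfs q))$ identifies the drift with exactly that equation, hence zero. The two obstacles you flag (the $\pa_r$ term and the degeneracy identity) are precisely the points the paper handles in Lemma~\ref{Bint_ER} and Propositions~\ref{level2_ER}--\ref{BPZ-C_ER}, so the plan is sound.
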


\begin{eg*} (\emph{Bosonic observables})
Let $\bfs\beta=0$ and $M(z):=\wh{\E} \Phi(z)$. Then $M$ is evaluated as 
\begin{equation}
M(z)=-2b\arg w'+2a \arg \Theta_\chi(r,w_z)+\sum \tau_k \arg \Big\{ \Theta_{\chi}(r,w_z-w_k) \Theta_{\chi}(w_z-\bar{w}_k) \Big\}. 
\end{equation}
In particular when $\chi=0$, the harmonic function $M$ satisfies piecewise Dirichlet boundary conditions having additional jump of $2\pi a$ at $p$ and $2\pi \tau_k$ at $\xi_k$'s. 

For $b=0$ (thus when $\kappa=4$) the associated bounded martingale plays an important role in the study of scaling limits of discrete GFF interface \cite{MR2486487} and GFF/SLE couplings \cite{MR3010393,MR2525778}. 
Further applications of such observables are presented in Propositions~\ref{Prop_LPP SLE4} and ~ \ref{Prob:hitting} in the context of left passage probability and hitting probability of SLE traces. 
\end{eg*}

\begin{eg*} (\emph{Vertex observables}) Let $\bfs{\tilde{\tau}}=\sum \tilde{\tau}_j\cdot \tilde{\xi}_j$ be a divisor satisfying $a+\int \bfs{\tilde{\tau}}=0$. 
Consider the vertex observable $M$ given by 
$$
M=\wh{\E} \OO_{\bfs\beta}[ \tilde{\bfs{\tau}}-\bfs{\tau} ]=\frac{ \E \Psi_{\bfs\beta}[a \cdot p+\bfs{\tilde{\tau}}] }{\E \Psi_{\bfs\beta}[a \cdot p+\bfs{\tau}]}.
$$	
This martingale-observable provides the Radon-Nikodym derivative between the law of two annulus SLEs associated with partition functions $\E \Psi_{\bfs\beta}[a \cdot p+\bfs{\tilde{\tau}}] $ and $\E \Psi_{\bfs\beta}[a \cdot p+\bfs{\tau}]$. 
Similar martingale-observables also appear in the study of restriction properties, see e.g., \cite{MR1992830,MR2118865}.
\end{eg*}

We emphasize that except for the ER case when $\chi=\infty$, the drift function $\Lambda$ is not $2\pi$-periodic with respect to space variables. 
To construct martingale-observables for annulus SLE with $2\pi$-periodic drift function, we use the weighted summation of the chiral fields, see \cite{MR896667} for a similar idea on compact Riemann surfaces. (cf. see also \cite{MR3334276} for implementation of this idea to the annulus SLE partition functions.)
To be more precise, for a weight function $\omega: \mathbb{Z}\to \R_{+}$, set 
$$
\Psi_{\bfs\beta}^\omega(p,\bfs{\xi}):=\sum_{ n \in \mathbb{Z} } \omega(n) \Psi_{\bfs\beta}(p+2n\pi,\bfs{\xi}).
$$ 
Here we assume that $\omega$ is suitably chosen so that the summation above converges within correlations. 
For instance, when $\chi<\infty$, we allow any weight function $\omega$ of polynomial type.
We then define 
$$
Z_{\bfs\beta}^\omega(p,\bfs{\xi}):=\E \Psi_{\bfs\beta}^\omega(p,\bfs{\xi}), \qquad \Lambda^\omega(r,p,\bfs{\xi}):=\kappa \, \pa_p \log Z_{\bfs\beta}^\omega(p,\bfs{\xi}). 
$$
Extending Theorem~\ref{main}, we obtain the following. 

\begin{thm} \label{main_period}
	For any string $\XX$ of fields in the OPE family $\FF_{\bfs\beta}$, a non-random field 
	\begin{equation}
	M=\wh{\E}[\XX]:=\frac{\E \Psi^\omega_{ \bfs{\beta} }(p , \bfs{\xi})\XX }{ Z_{\bfs\beta}^\omega(p,\bfs{\xi}) } 
	\end{equation}
	is a martingale-observable for $\SLE(\kappa,\Lambda^\omega)$. 
\end{thm}

\begin{eg*}
Let us consider the case that $\bfs\beta=\bfs 0$, $\bfs \tau=-a \cdot q$ for $q \in \R_r$ and $\omega(n)\equiv 1$. 
Then up to a multiplicative constant, the partition functions $Z \equiv Z_{\bfs\beta}(p,\bfs{\xi})$ and $Z^\omega \equiv Z_{\bfs\beta}^\omega(p,\bfs{\xi})$ are given by ($x=p-\Re \, q$) 
$$
Z(x)=\Theta_I(r,x)^{ \frac{2}{\kappa} } \exp\Big( \frac{x^2}{2\kappa(r+\chi)} \Big), \qquad 
Z^\omega(x)=\Theta_I(r,x)^{ \frac{2}{\kappa} }\Theta_I\Big(\frac{\kappa}{2}(r+\chi),x+\pi \Big), 
$$
where 
$$
\Theta_I(r,z)=i \, \Theta(r,z-ir) \exp\Big(-\frac{r}{4}-\frac{iz}{2} \Big).
$$
This leads to
$$
\Lambda(x)=H_I(r,x)+\frac{x}{r+\chi}, \qquad \Lambda^\omega(x)=H_I(r,x)+\frac{\kappa}{2} H_I\Big(\frac{\kappa}{2}(r+\chi),x+\pi \Big),
$$
where $H_I(r,z):=2 \, \pa_z \log \Theta_I(r,z)$.
Notice here that $\Lambda^\omega$ is $2\pi$-periodic, whereas $\Lambda$ is not. 

In particular when $\chi=0$, it can be shown that the associated SLE$(\kappa,\Lambda^\omega)$ trace ends at $\{ q+2n\pi : n \in \mathbb{Z} \}$. 
Then by virtue of vertex observables in the previous example, one can see that for each $m \in \mathbb{Z}$, 
$$
M_m(x):=\frac{Z(x-2m\pi)}{Z^\omega(x)}=\exp\Big( \frac{(x-2m\pi)^2}{2\kappa\,r} \Big)\Theta_I\Big(\frac{\kappa \, r}{2},x+\pi \Big)^{-1}
$$
gives rise to the probability that SLE$(\kappa,\Lambda^\omega)$ trace ends at $q+2m\pi$.
\end{eg*}

We now focus on the \emph{chordal type} annulus $\SLE(\kappa,\Lambda)$ with one force point $q$ starting from $p$, where both of the marked points lie on the same boundary component. 
It was shown in \cite{MR3334276} that in order for the SLE process to be reversible, the associated partition function $Z$ should satisfy the null-vector equation 
\begin{equation}
\label{Lawler Zhan null}
	\pa_r Z = \frac{\kappa}{2} Z'' + H\, Z' + h_{1,2}(\kappa)H'\, Z+C(r) Z, \qquad h_{1,2}(\kappa) := \frac{6-\kappa}{2\kappa},
\end{equation}
where $C(r)$ is a constant depending only on the modular parameter $r$. 

To construct CFT for such chordal type annulus SLE, we use the method of \emph{screening}. 
Due to the local commutations of annulus SLE$(\kappa,\Lambda)$, the partition function should have the same conformal dimensions at $p$ and $q$. 
In the case of chordal SLE in the upper-half plane, it can be achieved by the \emph{effective one-leg operator} with the total charge $2b-a$ (including the background charge $2b$) placed at $q$ and with the charge $a$ placed at $p$. 
For more details we refer the reader to \cite{KM}. 
However, this choice of partition function cannot be realized in the annulus because the neutrality conditions depend on the connectivity. 
See \cite{KM17} for the neutrality conditions for background charges on a compact Riemann surface of genus $g$. 
Instead we consider 
$$
\Psi_\beta(p,q):=C(\kappa) \oint_{\mathcal{P}(p,q)} \OO_{\bfs\beta}[a\cdot p +a\cdot q +s \cdot \zeta, \, \bfs{0}] \, d\zeta, \qquad (s = -2a),
$$
where $\mathcal{P}(p,q)$ is the Pochhammer contour entwining $p$ and $q$, see Figure~\ref{Pochh}. 
Here $C(\kappa)$ is a universal constant depending only on $\kappa$ and the background charge is given as $\bfs\beta:=\beta \cdot q_1-\beta \cdot q_2$ ($q_2-q_1=2\pi$) for some $\beta \in \R$. 

\begin{figure}[h!]
	\begin{center}
		\includegraphics[width=5.0in]{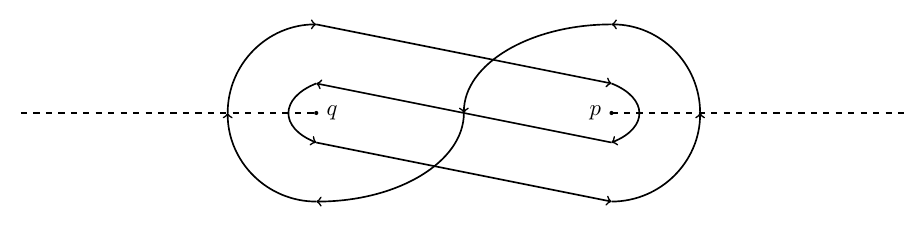}
	\end{center}
	\caption{Pochhammer contour} \label{Pochh}
\end{figure}

With this choice of $s =-2a$, the modified multi-vertex field $\OO_{\bfs\beta}[a\cdot p +a\cdot z +s \cdot \zeta, \, \bfs{0}]$ is a $1$-differential with respect to the ``screening" variable $\zeta$ and it satisfies the neutrality condition $(\mathrm{NC}_0)$. 
Clearly, $\Psi_\beta$ has the same conformal dimensions both at $p$ and $q$. 
For each $\kappa>0$, we have the following expression (up to a multiplicative constant) of $Z_\beta\equiv Z_\beta(r,p-q):=\E\Psi_\beta(p,q)$ in terms of Jacobi's theta function: 
\begin{equation}
Z_{\beta }=\Theta(p-q)^{\frac{2}{\kappa}} \oint_{\mathcal{P}(p,q)} \Theta(p-\zeta)^{-\frac{4}{\kappa} } \Theta(\zeta-q)^{-\frac{4}{\kappa} } \exp\Big( - \frac{ (\Sigma-2\beta \pi )^2 }{ 4(r+\chi) } \Big)\, d\zeta,
\end{equation}
where $\Sigma=a \, ( p+q-2\zeta )$. 
We remark that for $\kappa >4$, this Pochhammer contour integral simplifies to the integration over the interval between $p$ and $q$. 
It can be shown by the contour deformation method as we explain in Subsection~\ref{Screening_sub}. 
With this choice of the one-leg operator $\Psi_\beta(p,q)$, we construct a family of martingale-observables for chordal type $\SLE(\kappa,\Lambda_\beta)$, where $\Lambda_\beta=\kappa \, \pa_p \log Z_\beta$. 
\begin{thm} \label{main_SLE Z}
	For each $\kappa >0$, the partition function $Z_\beta:=\E\,\Psi_\beta(\cdot,q)$ is a non-trivial (real-valued) solution of the null-vector equation \eqref{Lawler Zhan null}. 
	Moreover, for any string $\XX$ of fields in the OPE family $\FF_{\bfs\beta}$, a non-random field 
	$$
	M=\wh{\E} \XX:= \frac{\E\Psi_\beta(p,q) \XX}{ \E\Psi_\beta(p,q) }
	$$
	is a martingale-observable for chordal type $\SLE(\kappa,\Lambda_\beta)$. 
\end{thm}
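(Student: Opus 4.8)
The proof splits into two parts: verifying that $Z=\E\,\Psi(\cdot,q)$ solves the null vector equation \eqref{Lawler Zhan null}, and showing that $\wh\E\XX$ is a martingale-observable. The mechanism common to both is that the screened one-leg operator $\Psi(p,q)$ carries the charge $a=\sqrt{2/\kappa}$ at $p,$ and therefore satisfies the level two degeneracy equation at $p$ exactly as the one-leg operator of Theorem~\ref{main}; the extra charge $a$ at $q$ and the screening charge $s=-2a$ at $\zeta$ serve only to restore the neutrality $(\mathrm{NC}_0)$ and to give the integrand conformal dimension one in $\zeta.$

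For the PDE I would run the screening argument. Writing $G(r,p,q,\zeta):=\E\,\OO[a\cdot p+a\cdot q+s\cdot\zeta,\bfs{0}]$ in the identity chart of $\CC_r,$ I would first combine Ward's equation (Theorem~\ref{Ward eq_main}) with the level two degeneracy of the vertex field at $p$ to rewrite the action of the null vector operator
\[
\mathcal N:=\pa_r-\tfrac{\kappa}{2}\,\pa_p^2-H\,\pa_p-h_{1,2}(\kappa)\,H'-C(r)
\]
on $G$ as a total derivative in the screening variable, $\mathcal N\,G=\pa_\zeta F,$ for a suitable field $F(r,p,q,\zeta).$ The role of the choice $s=-2a$ is precisely that $\OO[a\cdot p+a\cdot q+s\cdot\zeta,\bfs{0}]$ is a $1$-differential in $\zeta,$ so that $F$ is a genuine $1$-form on the cylinder and the exact term integrates to zero along the closed Pochhammer contour, $\oint_{\mathcal P(p,q)}\pa_\zeta F\,d\zeta=0.$ Interchanging $\mathcal N$ with the contour integral then yields $\mathcal N\,Z=0,$ which is \eqref{Lawler Zhan null}.

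Non-triviality and real-valuedness I would treat by cases. Since $p$ and $q$ lie on the same boundary component, $p-q$ is real, so the theta-function prefactors $\Theta'(0)^{6/\kappa}\Theta(p-q)^{2/\kappa}$ (resp.\ their $\tilde\Theta$ analogues) are real. For $\kappa>4$ the endpoint exponent $-4/\kappa$ exceeds $-1,$ so the Pochhammer contour collapses, up to an explicit nonzero trigonometric constant, to the straight segment from $q$ to $p;$ on this segment the integrand is real and of one sign, whence the resulting Euler type integral is real and strictly positive and $Z\not\equiv0.$ For $\kappa\le4$ the endpoint singularities are non-integrable and the segment integral diverges, so instead I would analytically continue the Pochhammer contour integral in $\kappa$ and verify that the continuation is real and does not vanish identically. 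This is the delicate step, and I expect it to be the main obstacle, since it requires tracking the monodromy of the multivalued integrand as $\kappa$ varies and confirming that the continued value remains real and nonzero.

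Finally, for the martingale-observable statement I would adapt the argument proving Theorem~\ref{main}. Because $\Psi(p,q)$ satisfies the level two degeneracy at $p$ and the drift $\Lambda=\kappa Z'/Z=\kappa\,\pa_p\log\E\,\Psi(p,q)$ has exactly the form required there, the It\^o expansion of $M_t=(\wh\E\XX\,\|\,\ti{w}_t^{-1})$ has vanishing drift: the degeneracy equation, Ward's equation, and the choice of $\Lambda$ conspire so that the $dt$-term is the action of $\mathcal N$ on the correlator, which is zero. The screening introduces no new difficulty, since the integrand is a $1$-differential in $\zeta,$ so the Pochhammer contour is carried conformally by $\ti{w}_t$ and the stochastic differential passes under $\oint_{\mathcal P(p,q)}d\zeta;$ the computation thereby reduces to that of Theorem~\ref{main} performed inside the contour integral.
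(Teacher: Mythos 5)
Your plan follows the paper's route almost step for step: the null-vector operator applied to $Z^\sharp=\E\,\OO[a\cdot p+a\cdot q-2a\cdot\zeta,\bfs 0]$ is shown to be $\pa_\zeta$-exact (the paper gets this from Lemma~\ref{pre screen null} together with the translation identity $(\pa_p+\pa_q+\pa_\zeta)Z^\sharp=0$, giving \eqref{pre screen null vec}), the exact term is killed by the closed Pochhammer contour via \eqref{boundary Poch}, and the martingale statement is reduced to the BPZ--Cardy mechanism of Theorem~\ref{main} by integrating the pointwise BPZ equation of Proposition~\ref{BPZ_ER} over the contour, the $\zeta$-Lie-derivative term again being exact because the integrand is a $1$-differential in $\zeta$. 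The $\kappa>4$ collapse to the Euler integral \eqref{Euler integral}, with the factor $4\sin^2(4\pi/\kappa)$ from the four passes cancelling against the normalization \eqref{scr normal const}, is also exactly what the paper does.

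The one genuine gap is the step you yourself flag: non-triviality and real-valuedness for $\kappa\le 4$. ``Analytically continue in $\kappa$ and verify the continuation is real and nonzero'' is a statement of intent rather than an argument, and tracking the monodromy of the integrand as $\kappa$ varies is not how the paper closes it. The paper's device is to evaluate the degenerate limit $r\to\infty$: by \eqref{degen gen} the limit $Z_\infty$ of the Euler integral is, for $\kappa>4$, the explicit expression \eqref{Z deg} involving the regularized hypergeometric function ${}_2\bfs{F}_1$, which is entire in its parameters. Since the Pochhammer integral with the normalization $C(\kappa)$ (whose factor $\Gamma(1-4/\kappa)^{-1}$ removes the poles at $4/\kappa\in\mathbb{N}$) is itself analytic in $\kappa$ on all of $\kappa>0$, the identity theorem propagates \eqref{Z deg} to $\kappa\le4$, exhibiting $Z$ as real-valued and not identically zero there. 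Without an explicit evaluation of this kind at a degenerate point, your continuation argument does not close: a priori the continued function could vanish identically, or fail to be real, once the Euler representation breaks down. Everything else in your proposal matches the paper's proof of Proposition~\ref{screen null} and of the second assertion of the theorem.
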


\subsection{Organization of the paper and further results}
The rest of this paper is organized as follows.

In Section~\ref{CFT with ER b.c.} we review some standard notions in CFT and describe the conformal Fock space fields generated by central charge modifications of GFF. 
For the convenience of the reader we borrow the relevant material from \cite{MR3052311} without proofs, thus making our exposition self-contained.

Section~\ref{WARD Section} is devoted to the study of Eguchi-Ooguri and Ward's equations (Theorem~\ref{Thm_Ward}). 

In Section~\ref{SLE MOs ER} we show the BPZ equations (Theorem~\ref{BPZ_ER}). After relating CFT to SLE theory, we then show the BPZ-Cardy equations (Proposition~\ref{BPZ-C_ER}) and complete the proof of Theorem~\ref{main}. 

In Section~\ref{Section_further generalizations}, we present several ways to extend the class of SLE martingale-observables and prove Theorems \ref{main_period} and ~\ref{main_SLE Z}. 
Moreover, the implementation of non-atomic background charge is explained in Subsection~\ref{Subsec_NVE msr} as well. 

In Section~\ref{Section_Examples}, based on the theories developed in the previous sections, further examples of SLE martingale-observables are indicated, which include those used in the study of continuum limits of discrete models. 

In Appendix~\ref{Appendix_RH}, we present representations of various Green's functions in terms of special functions.

\endgroup
\setcounter{thm}{\thetmp}

\section{Conformal Fock space fields} \label{CFT with ER b.c.}

We introduce a class of random fields in a doubly connected domain as correlation functional valued maps. 
All fields that we consider in this paper are constructed from the (modifications of) GFF and their derivatives by means of Wick's calculus and named after Fock space. 
We consider Dirichlet and ER boundary conditions for the GFF and their interpolations, see Subsection~\ref{Cor ftns_Sub} for more details. 
This subsection also recalls some basic concepts and properties concerning the Fock space fields and OPE products.
In Subsection~\ref{Conformal Subsec} we revise the definition of Fock space fields so that their values depend on local coordinates. 
In Subsection~\ref{subsec_CCM} we introduce the central charge modifications of GFF and present the associated Virasoro field. 
Based on the Coulomb gas formalism introduced in the following subsections, in Subsection~\ref{Subsec b.c.m.}, we present a more general theory of background charge modifications of GFF.

\subsection{Special functions} \label{Special_sub}

In this subsection we compile basic properties of some special functions used to represent the Loewner vector field and the correlation functions of Fock space fields. 

For each $r >0$, the Loewner kernel $H(r, \cdot)$ in $\S_r$ is given by 
\begin{equation*}
	H(r,z):= -i \, \textrm{P.V.} \lim_{n \to \infty} \sum_{n \in 2\mathbb Z} \frac{e^{nr}+e^{iz}}{e^{nr}-e^{iz}}.
\end{equation*}
We also write 
$$
H_I(r,z):=H(r,z+ir)+i.
$$
We refer to \cite{MR3334276} for fundamental properties of the Loewner kernel $H(r,\cdot)$. 
For reader's convenience, we list some of them as follows:
\begin{itemize}
	\item a meromorphic function $H(r, \cdot) $ on $\C$ has simple poles at $2\pi m +2ir n, (m,n \in \mathbb{Z})$ with residue 2;
	\item $H(r,z+2\pi)=H(r,z)$ and $H(r,z+2ir)=H(r,z)-2i$;
	\item $H(r,\cdot)$ is an odd function; 
	\item $H(r,\cdot)$ takes real values on $\R\sm \{2\pi n: n\in \mathbb{Z}\}$ and $\Im\, H(r, \cdot)\equiv -1$ on $\R_r$;
	\item $H(r,\pi)=0$, $H(r,ir)=-i$, and $H(r,\pi+ir)=-i$. 
\end{itemize}

A holomorphic function
$$
\Theta(r,z):=\frac{1}{i}\sum_{n=-\infty}^{\infty}(-1)^ne^{-r (n+\frac{1}{2} )^2 }e^{ (n+\frac{1}{2} )iz}
$$ 
vanishes only on $2\pi\mathbb{Z} + 2ir \mathbb{Z}$ and 
$$\Theta_I(r,z):=\sum_{n=-\infty}^{\infty}(-1)^ne^{-rn^2 }e^{niz}$$
is related to $\Theta$ as 
\begin{equation} \label{Theta and I}
\Theta_I(r,z)=i \, \Theta(r,z-ir) \exp\Big(-\frac{r}{4}-\frac{iz}{2} \Big).
\end{equation}
Notice that $\Theta(r,\cdot)$ is odd, whereas $\Theta_I(r,\cdot)$ is even. 
The functions $H$ and $H_I$ are represented in terms of $\Theta$ and $\Theta_I$ as 	
\begin{equation} \label{H Theta}
H(r,z)=2\frac{\Theta'(r,z)}{\Theta(r,z)}, \qquad H_I(r,z)=2\frac{\Theta'_I(r,z)}{\Theta_I(r,z)},
\end{equation}
where derivatives are taken with respect to the $z$-variable.
We remark that $\Theta$ and $\Theta_I$ are related to the classical Jacobi theta functions 
$$
\theta(z,\tau)=\frac{1}{i} \sum_{n=-\infty}^\infty (-1)^n e^{\pi i \tau (n+\frac{1}{2})^2} e^{(2n+1)\pi i z}, \qquad 
\theta_2(z,\tau)= \sum_{n=-\infty}^\infty (-1)^n e^{\pi i \tau n^2} e^{2n\pi i z} 
$$
as 
$$\Theta(r,z)=\theta \Big(\frac{z}{2\pi},\frac{ir}{\pi} \Big),\qquad \Theta_I(r,z)=\theta_2 \Big(\frac{z}{2\pi},\frac{ir}{\pi} \Big).$$
It is well known that $\Theta$ and $\Theta_I$ solve the heat equations		
\begin{equation} \label{heat eq. Theta}
\pa_r\Theta=\Theta'',	\qquad	\pa_r\Theta_I=\Theta_I''.
\end{equation}
Moreover, they are represented in terms of fundamental solutions of the heat equation as
\begin{align}
\begin{split}
\label{Theta_I Gauss}
\Theta(r,z)&=\sqrt{\frac{\pi}{r}} \sum_{n \in \mathbb{Z}} (-1)^n \exp\Big( -\frac{(z-\pi+2n\pi)^2}{4r} \Big), 
\\
\Theta_I(r,z)&=\sqrt{\frac{\pi}{r}} \sum_{n \in \mathbb{Z}} \exp\Big( -\frac{(z-\pi+2n\pi)^2}{4r} \Big),
\end{split}
\end{align}
see e.g., \cite[Eq.(20.2.13),(20.13.4)]{olver2010nist}.

To describe correlation functions with weighted boundary conditions indexed by $\chi \in [0,\infty]$, it is convenient to introduce the following theta function
\begin{equation}
\Theta_\chi(r,z):= \Theta(r,z) \exp\Big( \frac{z^2}{4(r+\chi)} \Big).
\end{equation}
In the sequel, we also write 
\begin{equation}
H_\chi(r,z):=2 \log \Theta'_\chi(r,z)= H(r,z)+\frac{z}{r+\chi}.
\end{equation}
In particular, we denote $\ti{\Theta}:=\Theta_{0}$ and $\tilde{H}:=H_0$. 
It is worth pointing out that $\ti{\Theta}(r,z) \in i \R_+$ if $z \in \R_r$. 
Note also that the function $H_\chi(r,\cdot)$ is real-valued on $\R_r$ if and only if $\chi=0$. 
Throughout this paper, we will frequently use the following (quasi) periodicities of theta functions 
\begin{align}
&\Theta(r,z+2\pi)=-\Theta(r,z), \qquad 
\Theta(r,z+2ir)=-\Theta(r,z) \exp(r-iz)\label{theta per},
\\
&\Theta_I(r,z+2\pi)=\Theta_I(r,z), \qquad 
\Theta_I(r,z+2ir)=-\Theta_I(r,z) \exp(r-iz) \label{theta I per}, 
\\
&\tilde{\Theta}(r,z+2\pi)=-\tilde{\Theta}(r,z) \exp(\pi (z+\pi)/r), \qquad 
\tilde{\Theta}(r,z+2ir)= -\tilde{\Theta}(r,z). \label{ti theta per}
\end{align}

The Weierstrass zeta function $\zeta_r$ with basic periods $(2\pi,2ir)$
\begin{equation} \label{eq: zeta} 
	\zeta_r(z):= \frac{1}{z}+ {\sum_\eta}^* \Big(\frac{1} {z-\eta}+\frac{1}{\eta}+ \frac{z}{\eta^2} \Big)
\end{equation}
is holomorphic except for the points $z \in 2\pi\mathbb{Z} +2ir\mathbb{Z}$, where it has simple poles with residue 1.
Here, the sum $\sum_\eta^* $ in \eqref{eq: zeta} is taken over all $\eta$ of the form $2\pi m +2ir n$ for $(m,n) \in \mathbb{Z}^2\setminus\{(0,0)\}$. 
It is well known that $\zeta_r(\pi)/\pi$ is represented as 
\begin{equation}
\label{Mod inv}
\frac{\zeta_r (\pi)}{\pi}=-\frac{1}{3}\frac{\Theta'''(r,0)}{\Theta'(r,0)} =-\frac13 \pa_r \log \Theta'(r,0)
= \frac{1}{12}- \frac{1}{2}\sum_{k=1}^{\infty} \sinh^{-2}(kr). 
\end{equation}
This value will appear in the formula of the correlation function of the Virasoro field (Proposition~\ref{Virasoro2_ER}). 

The Loewner kernel $H$ is represented in terms of the Weierstrass zeta function: 
\begin{equation} \label{h-zeta}
	H(r,z)=2\, \zeta_r(z)-\frac{2}{\pi}\zeta_r(\pi)\, z.
\end{equation}
In particular, we have the following asymptotic behavior of $H$:		
\begin{equation}\label{h-zero}
	H(r,z)=\frac{2}{z}-\frac{2\zeta_r(\pi)}{\pi}z+O(z^3),	\qquad \textrm{as } z \to 0.
\end{equation}

The following pseudo-addition formula (see \cite[p.57]{MR808396}) for zeta function is a crucial ingredient to derive a version of Eguchi-Ooguri equations (Proposition~\ref{Arep_ER}).

\begin{prop} \label{addthm}
	If $x+y+z=0$, then
	\begin{equation} \label{functional equation for zeta}
		(\zeta_r(x)+\zeta_r(y)+\zeta_r(z))^2+\zeta_r'(x)+\zeta_r'(y)+\zeta_r'(z)=0.
	\end{equation}
\end{prop}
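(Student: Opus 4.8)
The plan is to fix $y$ (so that $z=-x-y$) and regard the left-hand side as a function of the single variable $x$, then to argue by Liouville's theorem that it is a constant and to evaluate that constant to be zero. Since $\zeta_r$ is odd we may write $\zeta_r(z)=\zeta_r(-x-y)=-\zeta_r(x+y)$, and since $\zeta_r'$ is even we have $\zeta_r'(z)=\zeta_r'(x+y)$; thus it suffices to prove that
\[
G(x):=\big(\zeta_r(x)+\zeta_r(y)-\zeta_r(x+y)\big)^2+\zeta_r'(x)+\zeta_r'(y)+\zeta_r'(x+y)\equiv 0.
\]
Because $y$ is arbitrary, establishing $G\equiv 0$ yields the stated identity for every triple with $x+y+z=0$.

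First I would record the two structural facts needed. Differentiating the defining series \eqref{eq: zeta} gives $\zeta_r'(w)=-\tfrac{1}{w^2}-\sum_{\eta}^{*}\big(\tfrac{1}{(w-\eta)^2}-\tfrac{1}{\eta^2}\big)$, which is manifestly invariant under $w\mapsto w+\eta_0$ for any $\eta_0\in 2\pi\mathbb Z+2ir\mathbb Z$ (reindex the lattice sum); hence $\zeta_r'=-\wp_r$ is elliptic with periods $(2\pi,2ir)$. Consequently $\zeta_r(w+2\pi)-\zeta_r(w)$ and $\zeta_r(w+2ir)-\zeta_r(w)$ have vanishing derivative and are therefore additive constants — the quasi-periods of $\zeta_r$. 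I also note the Laurent data at the origin: oddness forces $\zeta_r(x)=\tfrac{1}{x}+O(x^3)$ (no linear term), whence $\zeta_r'(x)=-\tfrac{1}{x^2}+O(x^2)$. With these in hand, Step~1 (ellipticity of $G$) is routine: under $x\mapsto x+2\pi$ both $\zeta_r(x)$ and $\zeta_r(x+y)$ pick up the \emph{same} additive constant, so the bracket $\zeta_r(x)+\zeta_r(y)-\zeta_r(x+y)$, and hence its square, is unchanged, while the three $\zeta_r'$ terms are individually periodic; the identical argument applies to $x\mapsto x+2ir$. Thus $G$ is doubly periodic.

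Step~2 is the crux: removability of the candidate poles. Within a period parallelogram the only points where $G$ could be singular are $x\equiv 0$ and $x\equiv -y$ (i.e. $x+y\equiv 0$). Near $x=0$, Taylor-expanding $\zeta_r(x+y)$ about $y$ gives $\zeta_r(x)+\zeta_r(y)-\zeta_r(x+y)=\tfrac{1}{x}-\zeta_r'(y)\,x+O(x^2)$, whose square is $\tfrac{1}{x^2}-2\zeta_r'(y)+O(x)$; meanwhile $\zeta_r'(x)+\zeta_r'(y)+\zeta_r'(x+y)=-\tfrac{1}{x^2}+2\zeta_r'(y)+O(x)$. The $x^{-2}$ terms cancel, there is no $x^{-1}$ term (the bracket has no constant term), and the two constants cancel, so $G$ extends holomorphically across $x=0$ with $G(0)=0$. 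The expansion at $x=-y$ is the same computation with the two singular arguments interchanged, giving removability there as well. By Steps~1--2, $G$ is a holomorphic elliptic function, hence constant by Liouville's theorem, and that constant equals $G(0)=0$; this proves $G\equiv 0$ and therefore the pseudo-addition formula.

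The computation is entirely elementary, and the only place demanding care is the pole analysis of Step~2, where one must track the $x^{-2}$, $x^{-1}$, and constant coefficients simultaneously and confirm all three cancel; a single slip — forgetting that $\zeta_r'$ is even, or that the Laurent tail of $\zeta_r$ carries no linear term — would spuriously leave a pole or a nonzero constant. I would remark in passing that the identity also follows in one line from the classical addition theorems $\zeta_r(x+y)-\zeta_r(x)-\zeta_r(y)=\tfrac12\frac{\wp_r'(x)-\wp_r'(y)}{\wp_r(x)-\wp_r(y)}$ and $\wp_r(x)+\wp_r(y)+\wp_r(x+y)=\tfrac14\big(\frac{\wp_r'(x)-\wp_r'(y)}{\wp_r(x)-\wp_r(y)}\big)^2$ together with $\zeta_r'=-\wp_r$; the Liouville argument above is the self-contained route that avoids quoting these.
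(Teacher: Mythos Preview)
Your argument is correct and is the standard self-contained proof of this classical identity via Liouville's theorem: the ellipticity of $G$ follows from the quasi-periodicity of $\zeta_r$, the pole analysis at $x=0$ and $x=-y$ is carried out accurately (the cancellation of the $x^{-2}$, $x^{-1}$, and constant terms is exactly as you describe), and the vanishing of the resulting constant is read off from $G(0)=0$.

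There is nothing to compare with in the paper: Proposition~\ref{addthm} is stated without proof and simply attributed to Chandrasekharan's textbook on elliptic functions. Your write-up therefore supplies strictly more than the paper does. Your closing remark that the identity also drops out of the addition theorem for $\wp_r$ together with $\zeta_r'=-\wp_r$ is the route most textbooks take; the Liouville argument you give is equally standard and has the virtue of being self-contained within the definitions already recalled in Subsection~\ref{Special_sub}.
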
 

By \eqref{H Theta} and \eqref{h-zeta}, one can rewrite \eqref{functional equation for zeta} as follows:
\begin{align}
	\begin{split} \label{add H}
		H(z-w) ( H(z)-H(w) ) &= \frac{1}{2}H^2(z-w)+\frac{1}{2}( H(z)-H(w))^2
		\\&+ H'(z -w)+H'(z)+H'(w)+\frac{6}{\pi}\zeta_r(\pi).
	\end{split}
\end{align}

\subsection{Correlation functions of Fock space fields} \label{Cor ftns_Sub}

In this subsection we introduce a one-parameter family of GFFs with a weighted combination of Dirichlet and ER boundary conditions in a doubly connected domain and recall some basic concepts in CFT such as Fock space fields and OPE products. 
As an expansion of the tensor product of two Fock space fields near diagonal, operator product expansion (OPE) gives rise to important binary operations, namely OPE multiplications or OPE products on Fock space fields. 

\subsubsection {One-parameter family of Gaussian free fields}

An excursion reflected Brownian motion (ERBM) $B_{D_r}^{ER}$ in a doubly connected domain $D_r$ is a strong Markov process stopped at the outer boundary of $D_r$ that acts like a Brownian motion when it is away from $\pa D_r$. 
Also it reflects on the inner boundary of $D_r$ at a random point chosen according to the harmonic measure from $\infty$ whenever it hits the inner boundary. 
It is well known that ERBM is conformally invariant and so is the Green's function for ERBM.
See \cite{lawler2011defining,MR2992562} for more details. 
We also remark that ERBM is a special case of a more general process called Brownian motion with darning \cite{chen2016chordal}, which can be constructed using Dirichlet form theory. 
See also \cite{MR3706737} for another generalization of ERBM, called obliquely reflected Brownian motion. 

The Green's function $G_{D_r}^{Diri}$ in a doubly connected domain $D_r$ with zero Dirichlet boundary condition (b.c.) vanishes on $\pa D_r$. 
In the identity chart of the annulus $\A_r$ we have 
\begin{equation} \label{A_r Diri Green}
	G_{\A_r}^{Diri}(\zeta,z)=-\frac{\log|\zeta| \log|z|}{r}-\log \Big|z \frac{\sum_{k=-\infty}^{\infty}(-1)^ke^{-(k^2-k)r}(\zeta/z)^k}{\sum_{k=-\infty}^{\infty}(-1)^ke^{-(k^2-k)r}(\zeta\bar{z})^k} \Big|,
\end{equation}
see e.g., \cite[p.262]{MR822470}. 
The Green's function $G_{\A_r}^{ER}$ for ERBM in the annulus $\A_r$ is characterized by 
$$
\mathbb{E}^\zeta \Big[ \int_0^{\tau_{\A_r}} \bfs{1}_E ( B_{\A_r}^{ER} ) (t)\, dt \Big] =\int_E G_{\A_r}^{ER}(\zeta,z)\, dz
$$
for any Borel subset $E \subset \A_r$, where $\tau_{\A_r}:=\inf \{ t: B_{\A_r}^{ER}(t) \in \mathbb{T} \}$.
Then by \cite[Proposition 5.2 and Lemma 5.5]{MR2992562}, we have 
$$
G_{\A_r}^{ER}(\zeta,z)=-\log \Big|z \, \frac{\sum_{k=-\infty}^{\infty}(-1)^ke^{-(k^2-k)r}(\zeta/z)^k}{\sum_{k=-\infty}^{\infty}(-1)^ke^{-(k^2-k)r}(\zeta\bar{z})^k} \Big|.
$$ 
In the cylinder $\CC_r$, such Green's functions $G_r \equiv G_{\CC_r}$ are represented in terms of Jacobi theta function as
\begin{equation} \label{Green ER/Diri}
G_r(\zeta,z)=\begin{cases}
\log \Big|	\dfrac{\Theta(r,\zeta-\bar{z})}{\Theta(r,\zeta-z)}\Big| &\text{for ER b.c.}
\vspace{0.5em}
\\
\displaystyle \log \Big|	\dfrac{\Theta(r,\zeta-\bar{z})}{\Theta(r,\zeta-z)}\Big|-\frac{\Im\, \zeta \, \Im\, z}{r} &\text{for Dirichlet b.c.}
\end{cases}
\end{equation} 

From the analytic point of view, the Green's function $G_r$ with ER b.c. satisfies the \emph{zero period condition} on the inner boundary component $\gamma$. Therefore we have that for $z\in\gamma$,
\begin{equation} \label{b.c. Diri ER}
\begin{cases}
\displaystyle \oint_{\gamma} \frac{\pa G_r }{\pa n_z} \,
ds_z=0& \text{for ER b.c.}
\vspace{0.5em}
\\
\hspace{1.2em}G_r(\zeta,z)=0& \text{for Dirichlet b.c.} 
\end{cases}
\end{equation} 
Here $\pa n_z$ denotes the normal derivative and $ds_z$ is the arc length measure. 
Extending \eqref{b.c. Diri ER}, we introduce the weighted combination of Dirichlet and ER boundary conditions: for $\chi \in [0,\infty]$, 
\begin{equation} \label{b.c. chi}
\frac{1}{1+\chi} G_r(\zeta,z)+\frac{\chi}{1+\chi} \Big(\frac{1}{2\pi} \oint_{\gamma} \frac{\pa G_r}{\pa n_z} \, ds_z\Big)=0,
\end{equation}
where $z\in\gamma$.
One may realize \eqref{b.c. chi} as an analog of the well-known Robin boundary condition, in which the Neumann condition is replaced by the ER one.
It is then straightforward to see that the Green's function 
\begin{align} \label{Green interpol}
\begin{split}
G_r^\chi(\zeta,z):=\log \Big|	\dfrac{\Theta_\chi(r,\zeta-\bar{z})}{\Theta_\chi(r,\zeta-z)}\Big|
=\log \Big|	\dfrac{\Theta(r,\zeta-\bar{z})}{\Theta(r,\zeta-z)}\Big|-\frac{\Im\, \zeta \, \Im\, z}{r+\chi}
\end{split}
\end{align} 
satisfies the boundary condition \eqref{b.c. chi}. 

We denote by $\Phi$ the GFF with weighted boundary condition \eqref{b.c. chi}, see Figure~\ref{Fig_DGFF}. By definition, the $2$-point correlation function of GFF $\Phi$ in $D_r$ is given by
\begin{equation}\label{2pt cor_ER}
	\E[\Phi(\zeta)\Phi(z)]:=2\,G^\chi_{D_r}(\zeta,z)=2\log \Big|	\frac{\Theta_\chi(r,w(\zeta)-\overline{w(z)})}{\Theta_\chi(r,w(\zeta)-w(z))}\Big|,
\end{equation} 
where $w$ is a conformal map from $D_r$ onto $\mathcal C_r$. In general, the $n$-point correlation function of $\Phi$ is given by 
\begin{equation}\label{n-pt cor}	
	\E[\Phi(z_1)\cdots \Phi(z_n)]=\sum \prod_k 2G^\chi_{D_r}(z_{i_k},z_{j_k}),
\end{equation}
where the sum is taken over all partitions of the set $\{ 1,\cdots ,n \}$ into disjoint pairs $\{ i_k ,j_k\}$. 
The \emph{current field} $J=\pa \Phi$, its conjugate $\bar J=\bp \Phi$, and higher order derivatives of $\Phi$ are distributional fields (random generalized functions) and their correlators are computed by differentiating those of GFF.
For example, for $\zeta\neq z $, in the identity chart of $\CC_r$, we have
\begin{equation}\label{2pt cor CG_ER}
	\E[J(\zeta)\Phi(z)]=\dfrac{1}{2} (H_\chi (r,\zeta-\bar{z})-H_\chi(r,\zeta-z) ), \qquad 	\E[J(\zeta)J(z)] =\frac12 \, H^{\prime}_\chi (r,\zeta-z ).
\end{equation}

\subsubsection{Fock space fields} 

We treat the GFF as a Gaussian (correlation) functional-valued function and view its correlation function as the kernel representing the expectation of the value of GFF on test functions. 
We also treat the derivatives of GFF as centered (mean zero) Gaussian generalized functions and include Wick's product of derivatives of GFF in a collection of fields we consider. 

By definition, a basic Fock space correlation functional is a (formal form of) Wick's product $\mathcal X=X_1 (z_1)\odot \cdots \odot X_n (z_n)$ of derivatives $X_j$ of the GFF, and it has zero correlation, i.e., $\E \mathcal X=0$. 
Here, points $z_j\in D_r$ are not necessarily distinct and called the \emph{nodes} of $\mathcal X$. 
We write $S_{\mathcal X} \equiv S(\mathcal X)$ for the set of nodes of $\mathcal X$. 

For basic functionals $\XX_j$ of the form $\XX_j=X_{j1} (z_{j1})\odot \cdots \odot X_{jn_j} (z_{jn_j})$ with pairwise disjoint $S(\mathcal X_j)$ we define the \emph{tensor product} $\XX=\XX_1 \cdots \XX_m$ by Wick's formula
\begin{equation}\label{Wick's formula}
	\XX_1 \cdots \XX_m =\sum \prod_{\{	v,v'\}	}\E [X_v (z_v)X_{v' }(z_{v'})]\bigodot_{v''}X_{v''} (z_{v''}).
\end{equation} 
The sum in the right-hand side of \eqref{Wick's formula} is taken over all graphs (Feynman diagrams) with vertices $v$ labeled by functionals $X_{jk}$ such that there are no edges (Wick's contractions) of vertices with the same $j$, and the Wick's products are over unpaired vertices $v''$. 
For example, the Feynman diagram with edges $\{1,4\},\{3,6\}$ and unpaired vertices $2,5$ corresponds to
$$\contraction{}{(\Phi(z}{_1) \odot \Phi(z_2) \odot \Phi(z_3))(}{\Phi(z} \contraction[2ex]{(\Phi(z_1) \odot \Phi(z_2) \odot }{\Phi(z}{_3))(\Phi(z_4) \odot}{\Phi(z} (\Phi(z_1) \odot \Phi(z_2) \odot \Phi(z_3))(\Phi(z_4) \odot \Phi(z_5) \odot \Phi(z_6)) := \E[\Phi(z_1)\Phi(z_4)] \E[\Phi(z_3)\Phi(z_5)] \Phi(z_2)\odot \Phi(z_6).$$
The tensor product of correlation functionals is commutative and associative, see \cite[Proposition 1.1]{MR3052311}. 

We identify two functionals $\mathcal X_1$ and $\mathcal X_2$ if $\E[\XX_1 \YY]=\E[\XX_2 \YY]$ holds for all functionals $\YY$ with $S_{\mathcal Y} \cap (S_{\XX_1} \cup S_{\XX_2}) = \emptyset$. 
The complex conjugation $\overline{\XX}$ of $\XX$ is the unique correlation functional (modulo trivial functionals) such that 
$ \E [\overline{\XX} \YY ]=\overline{\E [\XX\YY ]}$
for all $\YY$'s of the form $\Phi(z_1)\odot \cdots \odot \Phi(z_n)$. 

\subsubsection{OPE calculations}
The \emph{operator product expansion} (OPE) of two Fock space fields $X$ and $Y$ is an asymptotic expansion of $X(\zeta)Y(z)$ near diagonal. 
In particular, if a field $X$ is holomorphic (i.e., $\bp X = 0$ within correlation), the OPE is defined as a formal Laurent series expansion
$$
X(\zeta)Y(z)=\sum C_n(z) (\zeta -z )^n, \qquad \text{as } \zeta \to z.
$$
More precisely, it means that $\E X(\zeta)Y(z) \XX =\E \sum C_n(z)(\zeta -z )^n \XX$ for each Fock space correlation functional $\XX$ for $\zeta,z \notin S_{\XX}$.
In this case, $*_n$ product is defined as $X *_n Y:=C_n$, the $n$-th OPE coefficient. 
In particular, we write $*$ for $*_0$ and call $X*Y$ the \emph{OPE multiplication}, or the \emph{OPE product} of $X$ and $Y$. 
We remark that the $\ast_n$ product is neither associative nor commutative. On the other hand, it satisfies Leibniz's rule, $\pa ( X *_n Y )= \pa X *_n Y + X *_n \pa Y$.

We now present some examples of OPEs. As $\zeta \to z, (\zeta \neq z)$, 
\begin{equation}\label{OPE_G}
	\Phi(\zeta)\Phi(z)=\log \frac{1}{|\zeta -z|^2}+2c(z)+\Phi^{\odot 2}(z) +o(1),
\end{equation}
where $c(z):=u(z,z)$ and $u(\zeta,z):=G_{D_r}^\chi(\zeta,z)+\log|\zeta-z|$. 
By definition, for $z \in D_r$, $\Phi* \Phi(z) = 2c(z)+ \Phi^{\odot 2}(z)$, where $c(z)$ is evaluated as
\begin{equation} \label{c(z)}
c(z)=\log \Big| \frac{ \Theta_\chi( w(z)-\overline{w(z)} ) }{ w'(z) \Theta'_\chi(0) } \Big|.
\end{equation}
We remark that the function $c(z)$ is called the \emph{conformal radius} for the ER b.c., whereas it is called the \emph{domain constant} for the Dirichlet b.c., see e.g., \cite[Section 4]{MR2230350}. 

Differentiating \eqref{OPE_G}, we have
\begin{align}
	\label{OPE CG_ER}
J(\zeta)\Phi(z)
&= -\dfrac{1}{\zeta-z}+(J\odot \Phi)(z)+\dfrac{w^{\prime}(z)}{2}H_\chi (w(z)-\overline{w(z)} )-\frac{w^{\prime \prime}(z)}{2w^{\prime}(z)}+o(1),
\\
\label{OPE CC_ER}
J(\zeta)J(z) &= -\dfrac{1}{(\zeta-z)^2}+(J\odot J)(z)
	-\frac{1}{6}S_w(z)-w^{\prime}(z)^2 \Big( \frac{\zeta_r(\pi)}{\pi}-\frac{1}{2(r+\chi)} \Big)+o(1).
\end{align}
Here, $S_h$ is the Schwarzian derivative $S_h$ of a conformal map $h$, i.e., 
$$
S_h:=N'_h-\frac{1}{2}N_h^2, \qquad N_h=\frac{h''}{h'}.
$$
We sometimes use the notation $\sim$ for the singular part of the OPE. For instance, we have 	
\begin{equation} \label{sing GG GC}
J(\zeta)\Phi(z) \sim -\frac{1}{\zeta -z},\qquad J(\zeta)J(z)\sim -\frac{1}{(\zeta -z)^2}.
\end{equation}

\subsection{Conformal Fock space fields} \label{Conformal Subsec}

We treat a stress tensor as a Lie derivative operator to state Ward's identities. 
For this purpose, it is needed to consider Fock space fields in a planar domain (e.g., a doubly connected domain) as defined on a Riemann surface. 
The correlation functions of conformal Fock space fields depend on the choice of local charts at their nodes. 

By definition, a non-random conformal field $f$ on a Riemann surface is a smooth function $ ( f \|\phi ) : \phi U \to \C$ for each local chart $\phi$. 
We define a general conformal Fock space field as $X=\sum_{\alpha}f_\alpha X_\alpha$, where $f_\alpha$'s are non-random conformal fields and basic fields $X_\alpha$ are Wick's products of derivatives of GFF. 

We now list some basic non-random fields $f$ with specific transformation laws between $f=(f \|\phi)$ and $\ti{f}=(f \| \ti{\phi})$ for any two overlapping charts $\phi$ and $\ti{\phi}$. 
A non-random field $f$ is called a \emph{differential} of conformal dimension $ (\lambda, \lambda_*)$ if 
$$
f=(h')^\lambda (\overline{h'})^{\lambda_*} \ti{f} \circ h,
$$
where $h$ is the transition map between $\phi$ and $\ti{\phi}$, i.e.,
$h= \ti{\phi} \circ \phi^{-1} : \phi (U \cap \ti{U}) \to \ti{\phi} (U \cap \ti{U})$. 
By definition, pre-pre-Schwarzian forms (PPS-forms), pre-Schwarzian forms (PS-forms) and Schwazian forms (S-forms) of order $\mu$ are conformal Fock space fields with transformation laws
$$
f=\ti{f} \circ h+\mu \log h', \qquad f=h' \ti{f} \circ h+\mu N_h, \qquad f=(h')^2 \ti{f} \circ h+\mu S_h
$$
respectively. 
In general, PPS-forms of order $(\mu,\nu)$ satisfy 
$$
f=\ti{f} \circ h+\mu \log h'+\nu \log \overline{h'}. 
$$

\subsubsection{Lie derivative of conformal Fock space field}

Suppose a non-random smooth vector field $v$ is holomorphic in some open set $U\subset M$. 
For a conformal Fock space field $X$, the Lie derivative $\LL_v X$ in $U$ is defined as
$$
( \LL_v X \| \phi ) = \frac{d}{dt} \Big|_{t=0} (X \| \phi \circ \psi_{-t} ),
$$
where $\psi_t$ is a local flow of $v$, and $\phi$ is a given chart.
For example, we have the followings: 
\begin{itemize}
	\item $\LL_v X = ( v\pa+\bar{v}\bp +\lambda v'+ \lambda_* \overline{v'} ) X$ for a $(\lambda,\lambda_*)$-differential $X$;
	\smallskip 
	\item $ \LL_v X = ( v\pa+v' ) X +\mu v''$ for a PS-form $X$ of order $\mu$;
	\smallskip 
	\item $\LL_v X = ( v\pa+2v' ) X +\mu v'''$ for an S-form $X$ of order $\mu$, 
\end{itemize}
see	\cite[Proposition 4.1]{MR3052311}.
For reader's convenience, we list some basic properties of the Lie derivatives:
\begin{itemize}
	\item $\E[\LL_v X] =\LL_v \E[X]$;
	\smallskip 
	\item $\LL_v (\bar{X}) =\overline{( \LL_v X )}$;
	\smallskip 
	\item $\LL_v (\pa X) = \pa (\LL_v X)$;
	\smallskip 
	\item Leibniz's rule holds for tensor, Wick's, and OPE products.
\end{itemize}

We define the $\C$-linear part $\LL_v^+$ and the anti $\C$-linear part $\LL_v^-$ of $\LL_v$ by		
$$
\LL_v^+ :=\frac{\LL_v-i\LL_{iv}}{2},\qquad \LL_v^- :=\frac{\LL_v + i\LL_{iv}}{2}
$$
so that $\LL_v$ is decomposed as $\LL_v = \LL_v^+ + \LL_v^-$. 
Note that while $\LL_v$ depends $\R$-linearly on $v$, $\LL_v^+$ (resp., $\LL_v^-$) depends (resp., anti-) $\C$-linearly on $v$.

\subsubsection{Stress energy tensor}

Here we recall the definition of a \emph{stress tensor} and review its basic properties. See \cite[Sections 5.2~5.3, Lecture 7, and Appendix 11]{MR3052311} for more details. 

A stress tensor for a family of conformal Fock space fields is defined as a pair of holomorphic and anti-holomorphic quadratic differentials which represent the Lie derivative operators within correlations of fields in this family. 
In the physics literature, special forms of formulas are known as (conformal) Ward's identities. 
The existence of a stress tensor is a remarkable property of certain families of conformal Fock space fields. 
For instance, the OPE family of GFFs has a stress tensor in common. 

A Fock space field $X$ in $D_r$ is said to have a stress tensor $(A,\bar{A})$ (or $X \in \FF(A,\bar{A}) )$ if $A$ is a holomorphic quadratic differential and the residue form of Ward's identity 
\begin{equation}
\LL_v^+ X(z)=\frac{1}{2\pi i}\oint_{(z)}vA X(z), \qquad \LL_v^- X(z)=-\frac{1}{2\pi i}\oint_{(z)} \bar{v}\bar{A} X(z)
\end{equation}
holds on $D_{\textrm{hol}}(v)\cap U$ for all smooth vector field $v$, where $D_{\textrm{hol}}(v)$ is the the maximal open set where $v$ is holomorphic.
By \cite[Proposition 5.8]{MR3052311}, $\FF(A,\bar{A})$ is closed under $\ast_n$ product, in particular, under differentiations. 
If $X$ is a differential or a form, the residue form of Ward's identity holds for $X$ if and only if Ward's OPE for $X$ holds in some or every chart. 
For example, a $(\lambda ,\lambda_*)$-differential $X$ is in $\FF(A,\bar{A})$ if and only if the following singular OPEs hold in every/some chart:
\begin{equation} \label{eq: Ward4Diff}
	A(\zeta) X(z) \sim \frac{\lambda X(z)}{(\zeta -z )^2}+\frac{\pa X (z)}{\zeta -z}; \qquad A(\zeta) \bar{X} (z) \sim \frac{\overline{\lambda}_* \bar{X}(z)}{(\zeta - z )^2}+\frac{\pa \bar{X}(z)}{\zeta -z }.
\end{equation}
For any form $X$ of order $\mu$ belongs to $\FF(A,\bar{A})$ if and only if the following singular OPE holds in every/some chart:
\begin{align} \label{eq: Ward4PPS}
	A(\zeta)X(z)&\sim \dfrac{\mu}{(\zeta - z)^2}+\dfrac{\pa X(z)}{\zeta - z} &\textrm{for a PPS-form }X;\\
	A(\zeta)X(z)&\sim \dfrac{2\mu}{(\zeta - z)^3}+\dfrac{X(z)}{(\zeta - z )^2}+\dfrac{\pa X(z)}{\zeta - z} &\textrm{for a PS-form }X;\\
	A(\zeta)X(z)&\sim \dfrac{6\mu}{(\zeta - z)^4}+\dfrac{2X(z)}{(\zeta - z )^2}+\dfrac{\pa X(z)}{\zeta - z} &\textrm{for an S-form }X.
\end{align} 

\subsection{Central charge modification} \label{subsec_CCM}

In this subsection we introduce the central charge modification $\Phi_{(b)}$ of GFF, where $b$ is the real parameter related to the central charge $c$ as $c = 1-12b^2$. 
In the chordal/radial CFT in a simply connected domain with a marked boundary/interior point $q$, two neutrality conditions are imposed in the construction of OPE exponentials. 
They have Wick's part and the correlation part; Wick's part is Wick's exponential of the linear combination of bosonic fields, and the correlation part can be expressed in terms of Coulomb gas correlation functions. 
We require that this linear combination of bosonic fields should be a well-defined Fock space field. 
This requirement is called a neutrality condition on the coefficients or charges of linear combination. 
At the same time, conformal invariance of Coulomb gas correlation functions in a simply connected domain gives rise to the other neutrality condition. 
Because the neutrality conditions are different, we need to place the background charge at $q$ to reconcile these two neutrality conditions. 
In a doubly connected domain (or on a compact Riemann surface of genus one), no background charge is placed for $\Phi_{(b)}$. It is consistent with the neutrality condition ($\mathrm{NC}_0$) that the sum of background charges should vanish due to a version of Gauss-Bonnet theorem. In Subsection~\ref{Subsec b.c.m.} we introduce the background charge modification $\Phi_{\bfs\beta}$ of GFF for a background charge $\bfs\beta$ with $\mathrm{NC}_0$.

We express a stress tensor and the Virasoro field of $\Phi_{(b)}$ in terms of the current field $J_{(b)} := \pa \Phi_{(b)}$. 
From now on, the subscript $(0)$ is added to the notation of fields in the OPE family of the GFF, the algebra over $\C$ spanned by $1$ and the derivatives of GFF under OPE multiplication. 
For instance, $\Phi_{(0)}$ is the new notation for the GFF and $J_{(0)}=\pa \Phi_{(0)}$. 
First, we recall the definition of the Virasoro field. 
\begin{def*}
	A Fock space field $T$ is said to be \emph{Virasoro field} for $\FF(A,\bar{A})$ if
	\begin{itemize}
		\item $T \in \FF(A,\bar{A})$, and
		\item $T-A$ is a non-random meromorphic Schwarzian form.
	\end{itemize}
	The order of Schwarzian form $T-A$ is denoted by $\frac{1}{12}c$, where $c$ is called the \emph{central charge}.	
\end{def*}
For each real parameter $b$, we define
\begin{equation}\label{ccm_ER}
	\Phi \equiv \Phi_{(b)}:=\Phi_{(0)}-2b\arg w', \qquad
	J \equiv J_{(b)}:=J_{(0)}+ib\dfrac{w^{\prime \prime}}{w^{\prime}},
\end{equation}
where $w$ is a conformal map from $D_r$ onto $\mathcal C_r$. 

\begin{prop} \label{Virasoro2_ER}
	The field $\Phi_{(b)}$ has a stress tensor 
	$$ A_{(b)}:= -\frac{1}{2}J_{(0)}\odot J_{(0)}+(ib\pa-\jmath_{(b)})J_{(0)}, \qquad \jmath_{(b)}:=\E[J_{(b)}]=ib\dfrac{w^{\prime \prime}}{w^{\prime}}, 
	$$
	and its Virasoro field is given by
	\begin{align*}
		T_{(b)} &=-\dfrac{1}{2}J_{(b)} \ast J_{(b)}+ib\pa J_{(b)} 
		= 	A_{(b)}+\dfrac{c}{12}S_w+(w^{\prime})^2\Big( \dfrac{\zeta_r(\pi)}{2\pi}-\frac{1}{4(r+\chi)} \Big),
	\end{align*}	
	where $c = 1-12b^2$.
\end{prop}
\begin{proof}
	First, we prove the case $b=0$. 
	It is obvious that $A_{(0)}:=-\frac{1}{2}J_{(0)}\odot J_{(0)}$ is holomorphic quadratic differential. 
	We claim that $W=(A_{(0)},\bar{A}_{(0)})$ is a stress tensor for $\Phi_{(0)}$. 
	Since $\Phi_{(0)}$ is a scalar field or a $(0,0)$-differential, by \eqref{eq: Ward4Diff}, all we need to verify is the following (Ward's) OPE in the identity chart of cylinder $\mathcal C_r:$
	\begin{equation}\label{S0 WOPE}
		A_{(0)}(\zeta) \Phi_{(0)}(z) \sim \dfrac{J_{(0)}(z)}{\zeta-z}.
	\end{equation}
	This is clear from \eqref{OPE CG_ER} and Wick's calculus.
	 
	We now define the Virasoro field $T_{(0)}$ of $\Phi_{(0)}$ by $T_{(0)}:=-\frac{1}{2}J_{(0)}*J_{(0)}$, equivalently by the following OPE:
	\begin{equation}\label{T0 OPE}
		J_{(0)}(\zeta)J_{(0)}(z)=-\dfrac{1}{(\zeta-z)^2}-2\,T_{(0)}(z)+o(1). 
	\end{equation}
	Then it follows from \eqref{OPE CC_ER} that 
	$$
	T_{(0)} =A_{(0)}+\dfrac{1}{12}S_w+(w^{\prime})^2\Big( \dfrac{\zeta_r(\pi)}{2\pi}-\frac{1}{4(r+\chi)} \Big).
	$$
	
	Next, let us prove the general case $b \neq 0$. It is easy to see that $ib\pa J_{(0)}$ and $\jmath_{(b)} J_{(0)}$ satisfy the following transformation rules:	
	\begin{align*}
		ib \pa J_{(0)}= ibh'' \ti{J}_{(0)}\circ h+ib(h')^2 \pa \ti{J}_{(0)}\circ h; \qquad 
		\jmath_{(b)} J_{(0)}= ib\Big(\frac{h''}{h'} \Big)h' \ti{J}_{(0)} \circ h+(h')^2 (\ti{\jmath}_{(b)} \ti{J}_{(0)} )\circ h.
	\end{align*}
	Thus we readily see that $A_{(b)}$ is a holomorphic quadratic differential. 
	
	 Again, we claim that $W=(A_{(b)},\bar{A}_{(b)})$ is a stress tensor for $\Phi_{(b)}$. Since $\Phi_{(b)}$ is the real part of a PPS-form, by \eqref{eq: Ward4PPS}, it suffices to show the following (Ward's) OPE in the identity chart of cylinder $\mathcal C_r:$
	$$
	A_{(b)}(\zeta) \Phi_{(b)}(z)= \Big(A_{(0)}(\zeta)+ (ib\pa-\jmath_{(b)}) J_{(0)}(\zeta) \Big) \Phi_{(b)}(z) \sim \dfrac{ib}{(\zeta-z)^2}+\dfrac{J_{(b)}(z)}{\zeta-z}.
	$$
	The above OPE follows from \eqref{OPE CG_ER}, \eqref{OPE CC_ER} and \eqref{S0 WOPE}. Let 
	$
	T_{(b)} =-\frac{1}{2}J_{(b)} \ast J_{(b)}+ib\pa J_{(b)}.
	$
	Then by \eqref{T0 OPE}, we obtain
	\begin{align*}
		T_{(b)} &= A_{(b)}+\dfrac{1}{12}S_w+(w^{\prime})^2 \Big( \dfrac{\zeta_r(\pi)}{2\pi}-\frac{1}{4(r+\chi)} \Big) -\dfrac{1}{2}\jmath_{(b)}^2+ib \pa \jmath_{(b)}
		\\
		&= A_{(b)}+\dfrac{1-12b^2}{12}S_w+(w^{\prime})^2 \Big( \dfrac{\zeta_r(\pi)}{2\pi}-\frac{1}{4(r+\chi)} \Big),
	\end{align*}
	which completes the proof. 
\end{proof}

\subsection{Formal bosonic fields and neutrality condition} \label{formal bosonic}

In this subsection we introduce formal fields and their formal correlation functions. 
We use them to describe modified multi-vertex fields (OPE exponentials) in the next subsection. 
We first define the \emph{bi-variant} field $\Phi^+ \equiv \Phi^+_{(b)}$ by 
$$
\Phi^+_{(b)}(z,z_0):=\Big \{ \Phi_{(b)}^{+}(\gamma)= \int_{\gamma}J_{(b)} (\zeta) \, d\zeta \Big \} =\Phi^+_{(0)}(z,z_0)+ib\log \dfrac{w'(z)}{w'(z_0)},
$$
where $\gamma$ is a curve from $z_0$ to $z$ and $w : D_r \to \CC_r$ is a conformal map. 
Note that the values of $\Phi^+$ are multivalued functionals. 

Define the complex Green's function in $D_r$ as 
$$
G^{\chi+}_{D_r}(z,z_1):=\dfrac{1}{2}\log\Big(\dfrac{\Theta_\chi(r,w(z)-\overline{w(z_1)})}{\Theta_\chi(r,w(z)-w(z_1))} \Big).
$$
We remark that for the ER case when $\chi=\infty$, the complex Green's function plays an important role in the theory of conformal mappings between canonical multiply connected domains, see e.g., \cite{MR2241034}. 
Integrating \eqref{2pt cor CG_ER} over a curve from $z_0$ to $z$, we obtain 
$$
\E [\Phi^+_{(0)}(z,z_0)\Phi_{(0)}(z_1) ]=2 (G^{\chi+}_{D_r}(z,z_1)-G^{\chi+}_{D_r}(z_0,z_1) ).
$$

Now we introduce {\it{formal (1-point) fields}} $\Phi_{(0)}^{\pm}$ in $D_r$ as centered Gaussian formal fields with (formal) 2-point correlations:
\begin{align}
	\label{formal ++ cor}	
	\E [\Phiplus_{(0)}(z_1)\Phiplus_{(0)}(z_2) ]&=	-\log \Theta_\chi (r,w(z_1)-w(z_2) );
	\\
	\label{formal +- cor}
	\E [\Phiplus_{(0)}(z_1)\Phiminus_{(0)}(z_2) ]&=+\log \Theta_\chi (r,w(z_1)-\overline{w(z_2)} ).
\end{align}
Formal fields $\Phi_{(0)}^{\pm}$ satisfy $\Phi_{(0)}=\Phi_{(0)}^++\Phi_{(0)}^-$ within correlations, e.g.,
$$
\E \,\Phi_{(0)}(z_1)\Phi_{(0)}(z_2) = \E (\Phi_{(0)}^+(z_1)+\Phi_{(0)}^-(z_1) ) (\Phi_{(0)}^+(z_2)+\Phi_{(0)}^-(z_2) ),
$$ 
where $\E$ in the left-hand side stands for correlation while we use formal correlations in the right-hand side. We define $\Phi_{(b)}^{\pm}$ by 
\begin{equation} \label{formal ccm}
	\Phiplus_{(b)}=\Phiplus_{(0)}+ib\log w^{\prime}, \qquad \Phiminus_{(b)}=\Phiminus_{(0)}-ib\log \overline{w'}.
\end{equation}
By definition, the following formal rule holds: 
$$
\Phiplus_{(0)}(z,z_0)=\Phiplus_{(0)}(z)-\Phiplus_{(0)}(z_0).
$$
The formal \emph{dual boson} $\ti{\Phi}_{(0)}$ is defined by 
\begin{equation} \label{dual boson}
\ti{\Phi}_{(0)}=-i (\Phiplus_{(0)}-\Phiminus_{(0)} ), \qquad \ti{\Phi}_{(0)}(z,z_0)=\ti{\Phi}_{(0)}(z)-\ti{\Phi}_{(0)}(z_0).
\end{equation}
Note that we have the following relations
$$ 
2\, \Phiplus_{(0)}=\Phi_{(0)}+i \ti{\Phi}_{(0)}, \qquad 2\, \Phiminus_{(0)}=\Phi_{(0)}-i \ti{\Phi}_{(0)}.
$$

For a finite set of distinct points $\{z_j	\}_{j=1}^n$ in $\bar{D}_r$, let us denote $\bfs{\sigma} =\sum \sigma_j \cdot z_j$, where $\sigma_j$ is a ``charge" at each $z_j$. 
We also consider $\bfs{\sigma}$ as a divisor, i.e., a function $\bfs{\sigma}: \bar{D}_r \to \R$ which takes the value $0$ at all points except for finitely many points, and $\bfs{\sigma}(z_j)=\sigma_j$. 
Sometimes it is convenient to consider $\bfs{\sigma}$ as an atomic measure $\bfs{\sigma}=\sum \sigma_j \cdot \delta_{z_j}$. 
For example, $\int \bfs{\sigma}=\sum \sigma_j$. 

For a double divisor $(\bfs{\sigma},\bfs{\sigma_*})$, we define the formal bosonic field $\Phi\,[\bfs{{\sigma}},\bfs{{\sigma_*}}]$ by
\begin{equation} \label{lin comb GFF}
	\Phi\,[\bfs{{\sigma}},\bfs{{\sigma_*}}] := \sum \sigma_j\Phiplus(z_j)-\sigma_{*j}\Phiminus(z_j), \qquad \Phi^{\pm} \equiv \Phi^{\pm}_{(0)}. 
\end{equation}
In general, it is not a well-defined Fock space field. 
The proposition below shows that the neutrality condition
\begin{equation}\label{NC0_ER} 
	\int \bfs{\sigma}+\bfs{\sigma_*}=0
\end{equation}
guarantees well-definedness of $\Phi[\bfs{{\sigma}},\bfs{{\sigma_*}}] $ as a Fock space field. 
For example, bi-variant fields satisfy the neutrality condition.

\begin{prop}
	If a double divisor $(\bfs{\sigma},\bfs{\sigma_*})$ satisfies (\ref{NC0_ER}), then the formal bosonic field $\Phi[\bfs{{\sigma}},\bfs{{\sigma_*}}]$ can be represented as a linear combination of well-defined Fock space fields.
\end{prop}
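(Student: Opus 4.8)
The plan is to reduce the formal field \eqref{lin comb GFF} to a linear combination of objects that are already admitted as (possibly multivalued) Fock space fields: the bi-variant fields $\Phiplus_{(0)}(z,z_0)=\int_\gamma J_{(0)}\,d\zeta$, their conjugates $\Phiminus_{(0)}(z,z_0)$, and the GFF $\Phi_{(0)}$ itself. The only source of ill-definedness in $\Phi[\bfs\sigma,\bfs{\sigma_*}]$ is that the formal $1$-point fields $\Phiplus_{(0)}(z_j)$, $\Phiminus_{(0)}(z_j)$ carry a base-point ambiguity and their individual correlations \eqref{formal ++ cor}--\eqref{formal +- cor} involve $\log\Theta$, which is only defined up to the quasi-periods in \eqref{theta per}. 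First I would make the cancellation of this ambiguity explicit by anchoring everything at a single auxiliary point $z_0\in\CC_r$.

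Using the formal rule $\Phiplus_{(0)}(z_j)=\Phiplus_{(0)}(z_j,z_0)+\Phiplus_{(0)}(z_0)$ together with its conjugate and substituting into \eqref{lin comb GFF}, I collect the terms evaluated at $z_0$ to obtain
\begin{equation*}
\Phi[\bfs\sigma,\bfs{\sigma_*}]=\sum_j\Big(\sigma_j\,\Phiplus_{(0)}(z_j,z_0)-\sigma_{*j}\,\Phiminus_{(0)}(z_j,z_0)\Big)+\Big(\textstyle\int\bfs\sigma\Big)\Phiplus_{(0)}(z_0)-\Big(\textstyle\int\bfs{\sigma_*}\Big)\Phiminus_{(0)}(z_0).
\end{equation*}
Each bi-variant field and its conjugate is a well-defined Fock space field, so the first sum is already an admissible linear combination; the obstruction is confined to the two anchor terms at $z_0$. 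I would then invoke the neutrality condition \eqref{NC0_ER}, which says precisely $\int\bfs{\sigma_*}=-\int\bfs\sigma$, so that
\begin{equation*}
\Big(\textstyle\int\bfs\sigma\Big)\Phiplus_{(0)}(z_0)-\Big(\textstyle\int\bfs{\sigma_*}\Big)\Phiminus_{(0)}(z_0)=\Big(\textstyle\int\bfs\sigma\Big)\Big(\Phiplus_{(0)}(z_0)+\Phiminus_{(0)}(z_0)\Big)=\Big(\textstyle\int\bfs\sigma\Big)\Phi_{(0)}(z_0),
\end{equation*}
using $\Phi_{(0)}=\Phiplus_{(0)}+\Phiminus_{(0)}$. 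Since $\Phi_{(0)}(z_0)$ is the single-valued GFF, this exhibits $\Phi[\bfs\sigma,\bfs{\sigma_*}]$ as a linear combination of bi-variant fields, their conjugates, and the GFF. When \eqref{NC0_ER} fails the leftover cannot be assembled into $\Phi_{(0)}$ and one is stuck with the ill-defined formal $1$-point fields, which is why neutrality is sharp; the same computation applies verbatim to the Dirichlet case after replacing $\Theta$ by $\tilde{\Theta}$.

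The step I expect to require the most care is confirming that the resulting field does not secretly depend on the auxiliary point $z_0$ and that its correlations are genuinely single-valued. For the former, a short computation using $\Phiplus_{(0)}(z_j,z_0)-\Phiplus_{(0)}(z_j,z_0')=\Phiplus_{(0)}(z_0',z_0)$ together with $\int\bfs{\sigma_*}=-\int\bfs\sigma$ shows the representation is independent of $z_0$. For the latter, I would test against $\Phi_{(0)}(p)$ and track how the summands $\pm\log\Theta$ jump, via \eqref{theta per}, as $w(p)$ is transported around either generating loop of $\CC_r$: the $w(p)$-linear part of each jump is weighted by the total charge $\int(\bfs\sigma+\bfs{\sigma_*})$, so neutrality is once more exactly the condition that removes the multivaluedness. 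These verifications, rather than the algebraic reduction itself, are where the real content lies.
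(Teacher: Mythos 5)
Your proposal is correct and follows essentially the same route as the paper: anchoring at an auxiliary point $z_0$, writing $\Phi[\bfs{\sigma},\bfs{\sigma_*}]$ as a sum of well-defined bi-variant fields plus the anchor terms $\big(\int\bfs{\sigma}\big)\Phiplus(z_0)-\big(\int\bfs{\sigma_*}\big)\Phiminus(z_0)$, and using neutrality to collapse the latter into $\big(\int\bfs{\sigma}\big)\Phi_{(0)}(z_0)$. The additional checks you flag (independence of $z_0$ and single-valuedness of correlations) are sound supplements but not part of the paper's brief argument.
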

\begin{proof}
	Let us choose any point $z_0 \in D_r$. Then
	$$
	\Phi[\bfs{{\sigma}},\bfs{{\sigma_*}}]=\Phiplus(z_0) \int \bfs{\sigma} -\Phiminus(z_0) \int \bfs{\sigma_*}+\sum\sigma_j\Phiplus(z_j,z_0)-\sigma_{*j}\Phiminus(z_j,z_0).
	$$
	Under the neutrality condition, the first two terms on the right-hand side become the Fock space correlation functional:
	$$
	\Phiplus(z_0) \int \bfs{\sigma} -\Phiminus(z_0) \int \bfs{\sigma_*}=\Phi(z_0)\int\bfs{\sigma}.
	$$
\end{proof}

\subsection{Coulomb gas correlation function} \label{Vertex fields}

In this subsection we construct modified multi-vertex fields and compute their correlation functions by the \emph{Coulomb gas formalism}.

Given divisors $\bfs{\sigma}=\sum \sigma_j \cdot z_j$, $\bfs{{\tau}}=\sum \tau_k \cdot \zeta_k$ for $z_j, \zeta_k \in D_r$, set 
$$
\Phi[\bfs{\sigma}]:=\sum \sigma_j \, \Phi(z_j), \qquad \Phi[\bfs{\tau}]:=\sum \tau_k \, \Phi(\zeta_k),
$$
where $\Phi \equiv \Phi_{(0)}$.
Define 
$$
\E \Phi[\bfs{\sigma}] *\Phi[\bfs{\tau}]:=\sum_{z_j \neq \zeta_k} \sigma_j \tau_k \,\E \Phi(z_j)\Phi(\zeta_k)+\sum_{z_j = \zeta_k} \sigma_j \tau_k \,\E \Phi*\Phi(z_j).
$$
In particular, set 
\begin{equation*} 
	c[ \bfs{\sigma} ]:= \frac{1}{2}\, \E \Phi[\bfs{\sigma}] * \Phi[\bfs{\sigma}]
\end{equation*}
and define the (non-chiral) \emph{Coulomb gas correlation function} $C_{(b)}$ as 
\begin{equation*} 
	C_{(b)}[ \bfs{\sigma} ]:= e^{-c[ \bfs{\sigma} ]} \prod (w'_j)^{-b\sigma_{j}} (\overline{w'_j})^{b\sigma_{j}}, 
\end{equation*}
where $w_j:=w(z_j)$. By \eqref{2pt cor_ER} and \eqref{c(z)}, if all $z_j$'s are in the bulk, $	C_{(b)}[ \bfs{\sigma}]$ is $( \tfrac12 \sigma_j^2-b\sigma_{j} , \tfrac 12 \sigma_j^2+b\sigma_{j} )$-differential at $z_j$, and its evaluation in the identity chart of cylinder $\CC_r$ is given as 
\begin{align*}
	C_{(b)}[ \bfs{\sigma} ]&=\Theta'_\chi(0)^{\sum \sigma_j^2} \prod \Theta_\chi(z_j-\bar{z}_j)^{ -\sigma_j^2 } \prod_{j < k} \Big|\frac{\Theta_\chi(z_j-z_k)} {\Theta_\chi(z_j-\bar{z}_k) } \Big|^{2\sigma_j \sigma_k} .
\end{align*}
We remark that the terminology ``Coulomb gas'' comes from the resemblance of the 2-point function $\E \Phi(z_1)\Phi(z_2)$ with the electric potential energy between two unit charges in a planar domain (or on a compact Riemann surface). 
For the genus one case we refer to \cite{MR4029833,forrester2006particles} and references therein. 

By definition, a non-chiral vertex field $\mathcal{V}_{(0)}[\bfs{\sigma}]$ is given by the OPE exponential 
\begin{equation*}
	\mathcal{V}_{(0)}[\bfs{\sigma}]:=e^{*i\Phi[\bfs{\sigma}]}\equiv \sum_{n=0}^{\infty} \frac{i^n \Phi^{\ast n}[\bfs{\sigma}]}{n!}.
\end{equation*}
Then it is easy to show that 
\begin{equation*}
	\mathcal{V}_{(0)}[\bfs{\sigma}]=
	C_{(0)}[\bfs{\sigma}]\mathcal{V}^{\odot}[\bfs{\sigma}], \qquad \mathcal{V}^{\odot}[\bfs{\sigma}]= e^{\odot i\Phi[\bfs{\sigma}]}.
\end{equation*}

Let us denote by $(\bfs{{\sigma}},\bfs{{\sigma_*}})=(\sum \sigma_j \cdot z_j ,\sum \sigma_{*j} \cdot z_j )$ a double divisor satisfying the neutrality condition $(\mathrm{NC}_0)$. We define (chiral) Coulomb gas correlation function $C_{(b)}{[\bfs{\sigma},\bfs{\sigma_*}]}$ as 
\begin{align}
	\begin{split}
		\label{Coulomb ftn}
	C_{(b)}{[\bfs{\sigma},\bfs{\sigma_*}]}:&=\Theta'_\chi(0)^{\sum \frac{1}{2}( \sigma_j^2+\sigma_{*j}^2 ) } \prod \Theta_\chi(w_{j}-\bar{w}_{j})^{\sigma_{j} \sigma_{*j}} \prod (w'_j)^{\lambda_j} ( \overline{w'_j} ) ^{\lambda_{*j}} 
		\\
		&\times\prod_{j < k} \Theta_\chi(w_j-w_k)^{\sigma_j\sigma_k}\Theta_\chi(\bar{w}_{j}-w_k)^{\sigma_{*j}\sigma_k} \Theta_\chi(w_j-\bar{w}_k)^{\sigma_j\sigma_{*k}}\Theta_\chi(\bar{w}_j-\bar{w}_k)^{\sigma_{*j}\sigma_{*k}},
	\end{split}
\end{align}
where the conformal dimensions are given as 
\begin{equation} \label{Cou dimension}
	(\lambda_j,\lambda_{*j}) = ( \lambda(\sigma_j),\lambda(\sigma_{*j}) ), \qquad \lambda(x):=\tfrac12 x^2-b\, x.
\end{equation}
Notice here that the non-chiral Coulomb gas correlation function is related to chiral one as 
$ C_{(b)}[ \bfs{\sigma} ]=C_{(b)}{[\bfs{\sigma},-\bfs{\sigma}]}$. 

By definition, the \emph{multi-vertex field} $\OO{[\bfs{\sigma},\bfs{\sigma_*}]}\equiv \OO_{(b)}{[\bfs{\sigma},\bfs{\sigma_*}]}$ is given by
\begin{equation} \label{mvf_def}
	\OO_{(b)}{[\bfs{\sigma},\bfs{\sigma_*}]} := C_{(b)}{[\bfs{\sigma},\bfs{\sigma_*}]} V^\odot[\bfs\sigma,\bfs\sigma_*], \qquad V^\odot[\bfs\sigma,\bfs\sigma_*]=e^{\odot i\Phi_{(0)}[\bfs{{\sigma}},\bfs{{\sigma_*}}]}.
\end{equation}	
Then the field $\OO{[\bfs{\sigma},\bfs{\sigma_*}]}$ is a differential with conformal dimension
$(h_j,h_{*j})=(\lambda(\sigma_j),\lambda(\sigma_{*j}))$ at $z_j$. 
In particular, we have 
\begin{equation} \label{OPE exponential}
\OO_{(b)}{[\bfs{\sigma}]}:=\OO_{(b)}{[\bfs{\sigma},-\bfs{\sigma}]}=C_{(b)}[\bfs{\sigma}] e^{ \odot i \Phi_{(b)}[\bfs{\sigma}] }=e^{ \ast i \Phi_{(b)}[\bfs{\sigma}]}.
\end{equation}

Now we show that $\OO{[\bfs{\sigma},\bfs{\sigma_*}]}$ satisfies Ward's OPE. 

\begin{prop} \label{WOPE:V_ER}
	Under the neutrality condition, $\OO(\bfs{{z}})\equiv \OO{[\bfs{\sigma},\bfs{\sigma_*}]}(\bfs{{z}})$ satisfies Ward's OPE: as $\zeta \to z_j \in D_r$,
	$$
	T(\zeta)\OO(\bfs{{z}}) \sim h_j\frac{\OO(\bfs{z})}{(\zeta-z_j)^2}+\frac{\pa_j \OO(\bfs{z})}{\zeta-z_j}, \qquad
	T(\zeta)\overline{\OO}(\bfs{z}) \sim \bar{h}_{*j}\frac{\overline{\OO}(\bfs{z})}{(\zeta-z_j)^2}+\frac{\pa_j \overline{\OO}(\bfs{z})}{\zeta-z_j}.
	$$ 
\end{prop}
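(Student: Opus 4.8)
The plan is to reduce the statement for the Virasoro field $T=T_{(b)}$ to the corresponding statement for the stress tensor $A_{(b)}$, and then to verify Ward's OPE by a direct Wick computation. Since $T-A_{(b)}$ is a \emph{non-random} Schwarzian form by Proposition~\ref{Virasoro2_ER}, it produces no singular contraction with the Fock space field $\OO(\mathbf{z})$, so the singular parts of $T(\zeta)\OO(\mathbf{z})$ and $A_{(b)}(\zeta)\OO(\mathbf{z})$ coincide. Because $\OO{[\bfs\sigma,\bfs\sigma_*]}$ is a $(\lambda_j,\lambda_{*j})$-differential at $z_j$ (as stated after \eqref{mvf_def}), the characterization \eqref{eq: Ward4Diff} shows that it suffices to establish the asserted OPE in \emph{one} chart; I would work in the identity chart of $\CC_r$, where $w=\id$, so that $j_{(b)}=0$ and $S_w=0$ and the formulas simplify considerably.

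First I would record the basic single contraction. Writing $A_{(b)}=-\tfrac12 J_{(0)}\odot J_{(0)}+ib\,\pa J_{(0)}$ in this chart, the only analytic input needed is the c-number $k(\zeta):=\E[J_{(0)}(\zeta)\,(i\Phi_{(0)}[\bfs\sigma,\bfs\sigma_*])]$. Using $J_{(0)}=\pa\Phi_{(0)}$ and differentiating the formal two-point functions \eqref{formal ++ cor}--\eqref{formal +- cor} (equivalently \eqref{2pt cor CG_ER}), the pieces depending on $\zeta$ only through $\bar\zeta$ drop out under $\pa_\zeta$, and one is left with
\[
k(\zeta)=-\frac{i}{2}\sum_{l}\big(\sigma_l\,H(\zeta-z_l)+\sigma_{*l}\,H(\zeta-\bar z_l)\big).
\]
By \eqref{h-zero}, $H(u)=2/u+O(u)$ has vanishing constant term, so near $z_j$ we have $k(\zeta)=\tfrac{-i\sigma_j}{\zeta-z_j}+a_j+O(\zeta-z_j)$ with a finite part $a_j$ expressed through $H$ evaluated at the remaining nodes. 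Expanding the Wick exponential then yields $A_{(b)}(\zeta)\OO=C_{(b)}\big[-\tfrac12(k^2+2kJ_{(0)}+J_{(0)}\odot J_{(0)})+ib(\pa k+\pa J_{(0)})\big]\odot V^\odot[\bfs\sigma,\bfs\sigma_*]$.

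Reading off the coefficient of $(\zeta-z_j)^{-2}$ gives $\tfrac12\sigma_j^2-b\sigma_j=\lambda_j$ times $\OO$, as required. The coefficient of $(\zeta-z_j)^{-1}$ collects three pieces: the finite-part term $i\sigma_j a_j\,\OO$ from $-\tfrac12 k^2$, a vanishing contribution from $ib\,\pa k$, and the field-valued term $i\sigma_j\,J_{(0)}(z_j)\odot V^\odot$ from $-kJ_{(0)}$. Since $\pa_{z_j}V^\odot=i\sigma_j J_{(0)}(z_j)\odot V^\odot$, the last piece is exactly $C_{(b)}\pa_{z_j}V^\odot$. The main obstacle is the remaining identity $i\sigma_j a_j=\pa_{z_j}\log C_{(b)}$, where one must match the finite part of the current--vertex contraction against the logarithmic derivative of the Coulomb gas correlation function \eqref{Coulomb ftn}. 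Using $H=2\Theta'/\Theta$ from \eqref{H Theta} and differentiating the product of $\Theta$-factors in $C_{(b)}$ node by node (the $\Theta'(0)$ prefactor and every $\bar z$-factor being annihilated by $\pa_{z_j}$), both sides reduce to $\tfrac{\sigma_j}{2}\big(\sum_{l\neq j}\sigma_l H(z_j-z_l)+\sum_l\sigma_{*l}H(z_j-\bar z_l)\big)$. Hence the residue equals $(\pa_{z_j}\log C_{(b)})\OO+C_{(b)}\pa_{z_j}V^\odot=\pa_j\OO$, which proves the holomorphic OPE; the anti-holomorphic statement for $\overline{\OO}$ follows by complex conjugation. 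The neutrality condition enters throughout to guarantee that $V^\odot[\bfs\sigma,\bfs\sigma_*]$, $C_{(b)}$, and hence $\OO$ are well defined, via the proposition in Subsection~\ref{formal bosonic}.
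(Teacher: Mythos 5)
Your proof is correct and follows essentially the same route as the paper's: both work in the identity chart of $\CC_r$, apply Wick's formula with the contraction $F(\zeta)=\E\big[J_{(0)}(\zeta)\,i\Phi_{(0)}[\bfs{\sigma},\bfs{\sigma_*}]\big]=-\tfrac{i}{2}\sum_l\big(\sigma_l H(\zeta-z_l)+\sigma_{*l}H(\zeta-\bar z_l)\big)$ (your $k$), and read off the coefficients of $(\zeta-z_j)^{-2}$ and $(\zeta-z_j)^{-1}$. The only cosmetic differences are that the paper treats $b=0$ first and then adds the $ib\,\pa J_{(0)}$ correction, while you handle both at once and spell out explicitly the identification of the finite part $i\sigma_j a_j$ with $\pa_{z_j}\log C_{(b)}$, which the paper leaves implicit.
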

\begin{proof}
	Since vertex fields are differentials, it is enough to prove the proposition in the identity chart of $\CC_r$. Recall that in the identity chart of $\CC_r$, 
	$$
	J_{(b)}=J_{(0)}, \qquad T_{(b)}=-\dfrac{1}{2}J_{(0)}\odot J_{(0)} +ib\,\pa J_{(0)} +\Big( \dfrac{\zeta_r(\pi)}{2\pi}-\frac{1}{4(r+\chi)} \Big).
	$$ 
 By Wick's calculus, we have 
 $$
 \pa_j \OO=
 \dfrac{\pa_j C_{(b)}[\bfs{\sigma},\bfs{\sigma_*}] }{C_{(b)}[\bfs{\sigma},\bfs{\sigma_*}]} \OO + i \sigma_{j} J_{(b)}(z_j) \odot \OO.
 $$
	
	First, let us show Ward's OPE in the case $b=0$. Let us denote
	\begin{align*}
	 F(\zeta,\bfs{z}):&=\E J_{(0)}(\zeta) \sum \Big(	i\sigma_j\Phiplus_{(0)}(z_j)-i\sigma_{*j}\Phiminus_{(0)}(z_j)	\Big) 
	=-\dfrac{i}{2} \sum \Big(\sigma_j H_\chi(r,\zeta-z_j)+\sigma_{*j} H_\chi(r,\zeta-\bar{z}_j) \Big).
	\end{align*}
	Then by \eqref{h-zero},\eqref{Coulomb ftn} and \eqref{Cou dimension} as $\zeta \to z_j$, we have 
	$$
	F(\zeta,\bfs{z}) \sim 
	 -\dfrac{i\sigma_j}{\zeta-z_j}, \qquad 	 F(\zeta,\bfs{z})^2 \sim -\frac{2 h_j}{(\zeta-z_j)^2}- \frac{\pa_j C_{(b)}[\bfs{\sigma},\bfs{\sigma_*}] }{C_{(b)}[\bfs{\sigma},\bfs{\sigma_*}]} \frac{2}{\zeta-z_j}.
	$$
	On the other hand, by Wick's formula, we have 
	$$ T_{(0)}(\zeta)\OO(\bfs{z})=T_{(0)}(\zeta)\odot\OO(\bfs{z})- F(\zeta,\bfs{z}) J_{(0)}(\zeta)\odot \OO(\bfs{z})-\tfrac{1}{2} F(\zeta,\bfs{z})^2 \OO(\bfs{z}).
	$$
 Combining all the above equations, our assertion in the case $b=0$ follows. For general $b \in \R$, we only need to check 
	$$
	ib\pa J_{(0)}(\zeta) \OO(\bfs{z}) \sim -\sigma_j b \frac{\OO (\bfs{z})}{(\zeta-z_j)^2}, \qquad \textrm{as } \zeta \to z_j.	
	$$
	Again, this easily follows from Wick's calculus. We leave it to the reader to verify the proposition for $\overline{\mathcal O}$. 
\end{proof}

\subsection{Background charge modifications} \label{Subsec b.c.m.}
Given marked points $q_k$, we call a divisor $\bfs{\beta}= \sum_k \beta_k \cdot q_k$ satisfying the neutrality condition $\int \bfs{\beta}=0$ a \emph{background charge}. 
By definition, a \emph{background charge modification} $\Phi_{ \bfs{\beta} } \equiv \Phi_{ \bfs{\beta},(b) }$ of the GFF $\Phi \equiv \Phi_{(b)}$ is given as 
\begin{equation} \label{one pt GFF beta} 
\Phi_{ \bfs{\beta},(b) }:= \Phi_{(0)}+ \varphi_{ \bfs{\beta} }, \qquad \varphi_{ \bfs{\beta} }(z):=-2b\arg w'_z+\sum \beta_k \arg \Big\{ \Theta_\chi(w_k-w_z) \Theta_\chi(\bar{w}_k-w_z) \Big\}.
\end{equation}
Here $w_z=w(z)$ and $w_k=w(q_k)$. By Wick's calculus, the non-random harmonic function $\varphi_{ \bfs{\beta} }$ can also be recognized as 
\begin{equation} \label{insertion bcm}
\varphi_{ \bfs{\beta} } (z)= \E \Phi_{ \bfs{\beta} }(z)= \E [ e^{ \odot i \Phi[ \bfs{\beta}/2, \bfs{\beta}/2 ]} \Phi (z)]. 
\end{equation}

The current field $J_{ \bfs{\beta} }$ is defined in a natural way as 
\begin{equation} 
J_{ \bfs{\beta},(b) } \equiv J_{ \bfs{\beta} }:= \pa \Phi_{ \bfs{\beta} }=J_{(0)}+\jmath_{ \bfs{\beta} }, 
\end{equation}
where 
\begin{equation}\label{jmath beta} 
\jmath_{ \bfs{\beta} }:=\pa \varphi_{ \bfs{\beta} }=ib\dfrac{w''_z}{w'_z}+\dfrac{iw'_z}{4}\sum \beta_k \Big\{ H_\chi(w_k-w_z)+H_\chi(\bar{w}_k-w_z) \Big\}.
\end{equation}
Notice that for the trivial divisor $\bfs{\beta}=\bfs{0}$, we have
$\Phi_{ \bfs{0} }=\Phi_{(b)}$ and $J_{ \bfs{0} }=J_{(b)}$. 

We now generalize Proposition~\ref{Virasoro2_ER}.
\begin{prop} \label{AT beta}
	The field $\Phi_{ \bfs{\beta} }$ has a stress tensor 
	$$ 
	A_{ \bfs{\beta} }:= A_{ (0) }+\Big(ib\pa-\jmath_{ \bfs{\beta} } \Big)J_{(0)}, 
	$$
	and its Virasoro field is given as
	\begin{align*}
	T_{ \bfs{\beta} } &:=-\dfrac{1}{2}J_{ \bfs{\beta} } \ast J_{ \bfs{\beta} }+ib\,\pa J_{ \bfs{\beta} }. 
	\end{align*}
	In particular, $\E T_{ \bfs{\beta} }(z)$ is evaluated in the identity chart of $\CC_r$ as 
	\begin{align*}
	\begin{split}
	\E T_{ \bfs{\beta} }(z)
	&=\dfrac{1}{32}\Big[ \sum \beta_k \Big\{ H_\chi(r,q_k-z)+H_\chi(r,\bar{q}_k-z) \Big\} \Big]^2
	\\
	&+\dfrac{b}{4} \, \sum \beta_k \Big\{ H'_\chi(r,q_k-z)+H'_\chi(r,\bar{q}_k-z) \Big\}+ \dfrac{\zeta_r(\pi)}{2\pi}-\frac{1}{4(r+\chi)}. 
	\end{split}
	\end{align*}
\end{prop}
\begin{proof}
	Throughout the proof, we denote by $\phi$ the local chart and $h= \ti{\phi} \circ \phi^{-1}$ the transition map between $\phi$ and a chart $\tilde{\phi}$ overlapping with $\phi$. We also use the tilde notation for conformal fields evaluated to the chart $\tilde{\phi}$, e.g., $\tilde{J}_{(0)}=( J_{(0)}\| \tilde{\phi} )$.
	
	We claim that $A_{ \bfs{\beta} }$ is a $(2,0)$-differential and Ward's OPE
	\begin{equation} \label{Ward OPE beta}
	A_{\bfs{\beta} }(\zeta) \Phi_{ \bfs{\beta} }(z) \sim \frac{ib}{(\zeta-z)^2}+\frac{J_{ \bfs{\beta} }(z) }{\zeta-z}
	\end{equation}
	holds. Then the first part of the proposition immediately follows from \eqref{eq: Ward4PPS}. 
	
	Recall that
	$\pa J_{(0)}= h'' \ti{J}_{(0)}\circ h+(h')^2 \pa \ti{J}_{(0)}\circ h$. Also by \eqref{jmath beta}, 
	\begin{equation} \label{trans jmath beta}
	\jmath_{ \bfs{\beta} }= h' \, \tilde{ \jmath}_{ \bfs{\beta} } \circ h+ ib\frac{h''}{h'}.
	\end{equation}
	Combining the above equations, one can observe that $(ib\pa-\jmath_{ \bfs{\beta} })J_{(0)}$ is a $(2,0)$-differential. 
	Note that 
	$$
	A_{\bfs{\beta} }(\zeta) \Phi_{ \bfs{\beta} }(z) \sim A_{(0)}(\zeta)\Phi_{(0)}(z)+\Big(ib\pa-\jmath_{ \bfs{\beta} } \Big) J_{(0)}(\zeta)\Phi_{ (0) }(z).
	$$
	Then \eqref{Ward OPE beta} follows from \eqref{sing GG GC} and Proposition~\ref{Virasoro2_ER}.
	
	Next, we prove the second part of the proposition. By Proposition~\ref{Virasoro2_ER}, we have 
	\begin{align} \label{T beta A beta rel}
	T_{ \bfs{\beta} }=A_{ \bfs{\beta} }+\E\,T_{(0)}-\tfrac12 \jmath_{ \bfs{\beta} }^2+ib \pa \jmath_{ \bfs{\beta} },
	\end{align}
	and $\E\,T_{(0)}$ is an S-form of order $1/12$. Moreover, by \eqref{trans jmath beta}, the term $-\frac12 \jmath_{ \bfs{\beta} }^2+ib \pa \jmath_{ \bfs{\beta} }$ is an S-form of order $-b^2$, which completes the proof. 
\end{proof}

We define holomorhpic (resp., anti-holomorphic) function $\varphi^+_{ \bfs{\beta} }$ (resp., $\varphi^-_{ \bfs{\beta} }$) as 
\begin{align}
\begin{split}
\varphi^+_{ \bfs{\beta} }&:=+ib\log w'_z-\dfrac{i}{2}\sum \beta_k \log \Big\{ \Theta_\chi(w_k-w_z) \Theta_\chi(\bar{w}_k-w_z) \Big\},
\\
\varphi^-_{ \bfs{\beta} }&:=
-ib\log \overline{w_z'}+\dfrac{i}{2}\sum \beta_k \log \Big\{ \Theta_\chi(\bar{w}_k-\bar{w}_z) \Theta_\chi(w_k-\bar{w}_z) \Big\}.
\end{split}
\end{align}
The chiral and anti-chiral parts of $\Phi_{ \bfs{\beta} }$ are defined by
\begin{equation}
\Phi_{ \bfs{\beta} }^+:=\Phi_{ (0) }^++\varphi^+_{ \bfs{\beta} }, \qquad \Phi_{ \bfs{\beta} }^-:=\Phi_{ (0) }^-+\varphi^-_{ \bfs{\beta} }.
\end{equation}
Notice that by definition, $\Phi_{ \bfs{\beta} }= \Phi_{ \bfs{\beta} }^++\Phi_{ \bfs{\beta} }^-$. 

For a double divisor $(\bfs{\sigma},\bfs{\sigma_*})=( \sum \sigma_j \cdot z_j, \sum \sigma_{*j} \cdot z_j )$ satisfying $\int (\bfs{\sigma}+\bfs{\sigma_*})=0$, we write 
\begin{equation}
\varphi_{ \bfs{\beta} }\,[\bfs{{\sigma}},\bfs{{\sigma_*}}]:= \sum \sigma_j \, \varphi_{ \bfs{\beta} }^+-\sigma_{*j} \, \varphi_{ \bfs{\beta} }^- .
\end{equation}
We define the formal bosonic field 
$\Phi_{ \bfs{\beta} }\,[\bfs{{\sigma}},\bfs{{\sigma_*}}]$ associated with the background charge $\bfs{\beta}$ as 
\begin{align}
\begin{split}
\Phi_{\bfs{\beta}}\,[\bfs{{\sigma}},\bfs{{\sigma_*}}] :=\sum \sigma_j \Phi_{ \bfs{\beta} }^+(z_j) -\sigma_{*j} \Phi_{ \bfs{\beta} }^-(z_j) 
=\Phi_{(0)}\,[\bfs{{\sigma}},\bfs{{\sigma_*}}]+ \varphi_{ \bfs{\beta} }\,[\bfs{{\sigma}},\bfs{{\sigma_*}}].
\end{split}
\end{align}

The modified multi-vertex field $\OO_{ \bfs{\beta} }[\bfs{{\sigma}},\bfs{{\sigma_*}}]$ is defined by
\begin{equation} \label{vertex beta}
\OO_{\bfs{\beta}}{[\bfs{\sigma},\bfs{\sigma_*}]} := \frac{ C_{(b)}{[\bfs{\sigma}+\bfs{\beta}/2,\bfs{\sigma_*}+\bfs{\beta}/2]} }{ C_{(b)}{[\bfs{\beta}/2,\bfs{\beta}/2]} } V^\odot[\bfs\sigma,\bfs\sigma_*].
\end{equation}
In particular, if $\bfs{\sigma_*}=-\bfs{\sigma}$, we simply omit $\bfs{\sigma_*}$ in the notations, e.g., 
\begin{equation}
\varphi_{ \bfs{\beta} }\,[\bfs{{\sigma}}]:=\sum \sigma_j \varphi_{ \bfs{\beta} }, \qquad \Phi_{\bfs{\beta}}\,[\bfs{{\sigma}}]:=\sum \sigma_{j}\, \Phi_{ \bfs{\beta} }(z_j).
\end{equation}

The modified multi-vertex fields are called the OPE exponentials of GFF.
The reason for this terminology is clear due to the following proposition.

\begin{prop} \label{V=O}
	We have
	$\OO_{\bfs{\beta}}{[\bfs{\sigma}]}=e^{ \ast i \Phi_{ \bfs{\beta} } [\bfs{\sigma}] }$. 
\end{prop}
\begin{proof}
	By \cite[Lemma 5.2]{KM17} and \eqref{OPE exponential}, 
	\begin{align*}
	e^{ \ast i \Phi_{ \bfs{\beta} } [\bfs{\sigma}] }=e^{ i \varphi_{ \bfs{\beta} } [\bfs{\sigma}] } e^{ \ast i \Phi_{ (0) } [\bfs{\sigma}] }
	= C_{(0)}[\bfs{\sigma}] e^{ i \varphi_{ \bfs{\beta} } [\bfs{\sigma}] } e^{ \odot i \Phi_{(0)}[\bfs{\sigma}] }.
	\end{align*}
	Therefore all we need to show is 
	\begin{equation}\label{exp Cou iden}
	C_{(0)}[ \bfs{\sigma},\bfs{\sigma_{*}} ] e^{ i \varphi_{ \bfs{\beta} } [\bfs{\sigma},\bfs{\sigma_{*}}] } = \frac{ C_{(b)}{[\bfs{\sigma}+\bfs{\beta}/2,\bfs{\sigma}_{*}+\bfs{\beta}/2]} }{ C_{(b)}{[\bfs{\beta}/2,\bfs{\beta}/2]} }. 
	\end{equation} 

Observe that $e^{i\varphi_{ \bfs{\beta} }\,[\bfs{{\sigma}},\bfs{{\sigma_*}}]}$ is a scalar field with respect to each $q_k$ and is a differential with dimension $(-b\sigma_{j},-b\sigma_{*j})$ at each $z_j$. 
	Therefore by \eqref{Cou dimension}, conformal dimensions of both sides of \eqref{exp Cou iden} are identical. Thus it is enough to show \eqref{exp Cou iden} in the identity chart of $\CC_r$. 
 By construction, 
	\begin{align*}
	\begin{split}
	e^{i\varphi_{ \bfs{\beta} }\,[\bfs{{\sigma}},\bfs{{\sigma_*}}]}&= \prod_{j < k} \Big( \Theta_\chi(r,q_k-z_j) \Theta_\chi(r,\bar{q}_k-z_j) \Big)^{\frac{ \sigma_{j} \beta_k}{2} } \Big( \Theta_\chi(r,\bar{q}_k-\bar{z}_j) \Theta_\chi(r,q_k-\bar{z}_j) \Big)^{\frac{ \sigma_{*j} \beta_k}{2} }.
	\end{split}
	\end{align*}
Now proposition follows from \eqref{Coulomb ftn}.
\end{proof}

We now generalize Proposition~\ref{WOPE:V_ER} for a non-trivial background charge $\bfs\beta$.
\begin{prop}\label{WOPE:V}
	The vertex field $\OO_{\bfs{\beta}}{[\bfs{\sigma},\bfs{\sigma_*}]}$ satisfies Ward's OPE
	$$
	T_{\bfs{\beta}}(\zeta)\OO_{\bfs{\beta}}{[\bfs{\sigma},\bfs{\sigma_*}]} \sim \lambda_j\frac{ \OO_{\bfs{\beta}}{[\bfs{\sigma},\bfs{\sigma_*}]} }{(\zeta-z_j)^2}+\frac{\pa_{z_j} \OO_{\bfs{\beta}}{[\bfs{\sigma},\bfs{\sigma_*}]}}{\zeta-z_j}, \qquad\text{as } \zeta \to z_j,
	$$
	and similar OPE holds for $\overline{\OO}_{\bfs{\beta}}{[\bfs{\sigma},\bfs{\sigma_*}]} $. 
\end{prop}
\begin{proof}
We write $\OO_{\bfs{\beta}}\equiv \OO_{\bfs{\beta}}{[\bfs{\sigma},\bfs{\sigma_*}]}$ and $\OO_{(b)}\equiv \OO_{(b)}{[\bfs{\sigma},\bfs{\sigma_*}]}$ throughout the proof. 
	Since $\OO_{\bfs{\beta}}$ is a differential, it suffices to show the proposition in the identity chart of $\CC_r$. By combining Propositions~\ref{Virasoro2_ER} and ~\ref{AT beta}, we have 
	\begin{align*}
	T_{\bfs{\beta}}=T_{(b)}&- ( \jmath_{ \bfs{\beta} }-\jmath_{(b)} )J_{(0)}
	-\frac12 ( \jmath_{ \bfs{\beta} }^2-\jmath_{(b)}^2 ) +ib\pa ( \jmath_{ \bfs{\beta} }-\jmath_{(b)} ).
	\end{align*}
	Then it follows from Proposition~\ref{WOPE:V_ER} and $\OO_{\bfs{\beta}}=(\E \OO_{\bfs{\beta}}/\E \OO_{(b)}) \OO_{(b)}$ that 
	\begin{align*}
	T_{\bfs{\beta}}(\zeta)\OO_{\bfs{\beta}} &\sim \frac{\E \OO_{\bfs{\beta}} }{\E \OO_{(b)} } 	T_{(b)}(\zeta)\OO_{(b)}-( \jmath_{ \bfs{\beta} }-\jmath_{(b)} )J_{(0)}(\zeta)\OO_{ \bfs{\beta} }
	\\
	&\sim \frac{ \lambda_j \, \OO_{\bfs{\beta}} }{(\zeta-z_j)^2}+\frac{\E \OO_{\bfs{\beta}} }{\E \OO_{(b)} } \frac{\pa_{z_j} \OO_{(b)}}{\zeta-z_j}-( \jmath_{ \bfs{\beta} }-\jmath_{(b)} )J_{(0)}(\zeta)\OO_{ \bfs{\beta} }
	\\
	&\sim \frac{ \lambda_j \, \OO_{\bfs{\beta}} }{(\zeta-z_j)^2}+ \frac{\pa_{z_j} \OO_{ \bfs{\beta} } }{\zeta-z_j}-\frac{ \pa_{z_j}( \E \OO_{ \bfs{\beta} }/\E \OO_{(b)} ) }{ \E \OO_{ \bfs{\beta} }/\E \OO_{(b)} }\frac{ \OO_{ \bfs{\beta} } }{\zeta-z_j}-( \jmath_{ \bfs{\beta} }-\jmath_{(b)} )J_{(0)}(\zeta)\OO_{ \bfs{\beta} }.
	\end{align*}
	
	On the other hand, by Wick's calculus together with \eqref{formal ++ cor},\eqref{formal +- cor} and \eqref{h-zero}, we have 
	\begin{align} \label{J O sing}
	J_{(0)}(\zeta)\OO_{ \bfs{\beta} } &\sim \OO_{ \bfs{\beta} } \, \E J_{(0)}(\zeta) \sum (	i\sigma_j\Phiplus_{(0)}(z_j)-i\sigma_{*j}\Phiminus_{(0)}(z_j)	)
	\sim -i \sigma_j \, \frac{ \OO_{\bfs{\beta}} }{ \zeta-z_j }.
	\end{align}
	Moreover by \eqref{Coulomb ftn} and \eqref{jmath beta}, we have
	$$
	\frac{\pa_{z_j} \E \OO_{ \bfs{\beta} } }{\E \OO_{ \bfs{\beta} }}-\frac{\pa_{z_j} \E \OO_{(b)}}{\E \OO_{(b)}} =i\sigma_j ( \jmath_{ \bfs{\beta} }-\jmath_{(b)} ).
	$$
	This completes the proof. 
\end{proof}

\section{Eguchi-Ooguri and Ward's equations} \label{WARD Section}

In \cite{MR873031}, Eguchi and Ooguri stated Ward's equation for tensor product $\XX$ of primary fields (differentials in the OPE family) on a complex torus of genus one in terms of linear partial differential operators  with respect to the nodes of fields and the modular parameter $\tau$. 
They explained the so-called Teichm\"uller term $\pa_\tau \E\XX$ by means of the Virasoro generator $L_0$, the zeroth mode of the Virasoro field.
In this section we derive Ward's equation in a doubly connected domain not from Eguchi and Ooguri's path integral formalism but some basic identities in special function theory together with Wick's calculus.

\subsection{Ward's functional and Ward's identities} 

We first introduce global Ward's functional and present the relation between a stress tensor and Ward's functional. 

Let $W=(A_{\bfs{\beta}},\bar{A}_{\bfs{\beta}})$ be a stress tensor for $\mathcal F_{\bfs{\beta}}$. 
By Proposition~\ref{AT beta}, it is easy to see that $A=A_{\bfs{\beta}}$ is continuous and real on the boundary. 
For a given open set $U\subset D_r$ and a smooth vector field $v$ in $\overline{U}$ continuous up to the boundary, Ward's functionals $W^{\pm}(v; U)$ are defined by
$$
W^+(v;U)=\dfrac{1}{2\pi i} \int_{\pa U}vA-\frac{1}{\pi}\int\!\!\int_U (\bp v) A, \qquad \qquad	W^-(v;U)=\overline{W^+(v;U)}.
$$
We also write $W(v;U):=2\,\Re\, W^+(v;U)$. Then for any Fock space functional $\XX$ with nodes in $D_r$, $\E [W^+(v;U)\XX]$ is a well-defined correlation function in $D_{\textrm{hol}}(v)$.

We will frequently use the following proposition. 
See \cite[Propositions 5.3 and 5.10]{MR3052311} for more details. 
The last equations in the following proposition are known as Ward's identities. 
\begin{prop} \label{prop: Ward's identities}
	The following statements are equivalent:
	\begin{itemize}
		\item Ward's OPE holds for a Fock space field $X;$
		\smallskip 
		\item The residue form of Ward's identity for $X$	
		\begin{align*}
			\LL_v X(z)=\frac{1}{2\pi i}\oint_{(z)}vA X(z)-\frac{1}{2\pi i}\oint_{(z)} \overline{v}\bar{A} X(z)
		\end{align*}
		holds on $D_{\textrm{hol}}(v)\cap U$ for all smooth vector field $v;$
			\smallskip 
		\item	For all $z\in D_{\textrm{hol}}(v)$, 
		\begin{align*}
			\E[\YY\LL(v,U)X(z)]=\E[\YY W(v,U)X(z)]
		\end{align*} 
		holds for all Fock space functionals $\YY$ with nodes outside supp($v$).
	\end{itemize}
\end{prop}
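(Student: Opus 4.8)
The plan is to treat the residue form of Ward's identity as the hub of the equivalence. The identification of Ward's OPE with the residue form is already recorded in the preceding text: for a differential or a form, Ward's OPE in some/every chart is equivalent to the residue identities $\LL_v^+X(z)=\frac{1}{2\pi i}\oint_{(z)}vAX(z)$ and $\LL_v^-X(z)=-\frac{1}{2\pi i}\oint_{(z)}\bar v\bar AX(z)$, and by linearity this extends to a general Fock space field built from such generators. So I would quote that equivalence and concentrate the work on showing that the global functional identity is exactly the residue form read inside correlations. The bridge is the elementary computation that $\frac{1}{2\pi i}\oint_{(z)}v(\zeta)A(\zeta)X(z)\,d\zeta$ recovers the combination $v\pa X+\lambda v'X$ (and its PS/S-form analogues) by Taylor-expanding the locally holomorphic field $v$ about $z$ and reading off the residue; this makes transparent why the singular OPE coefficients prescribed by Ward's OPE are precisely the data of $\LL_v^{\pm}X$.

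The substantive step is the equivalence between the residue form and the functional identity. Fix $z\in D_{\hol}(v)$ and a functional $\YY$ whose nodes avoid $\mathrm{supp}(v)$, and set $g(\zeta):=\E[A(\zeta)X(z)\YY]$, a function meromorphic in $\zeta$ on $U$ with poles only at $z$ and at the nodes of $\YY$ lying in $U$. The key structural fact is that $\bp v$ vanishes in a neighborhood of every pole of $g$: near $z$ because $z\in D_{\hol}(v)$, and near each node of $\YY$ because $v\equiv0$ there. Excising small disks $D_\varepsilon(z_j)$ about the poles and applying Stokes' theorem to the $1$-form $vg\,d\zeta$ on $U'=U\setminus\bigcup_jD_\varepsilon(z_j)$, where $g$ is holomorphic so that $d(vg\,d\zeta)=2i\,(\bp v)\,g\,dA$, I obtain
\[
\frac{1}{2\pi i}\int_{\pa U}vg\,d\zeta-\frac{1}{\pi}\int\!\!\int_{U'}(\bp v)\,g\,dA=\sum_j\frac{1}{2\pi i}\oint_{\pa D_\varepsilon(z_j)}vg\,d\zeta .
\]
Since $v\equiv0$ near the nodes of $\YY$, only the circle about $z$ survives on the right; letting $\varepsilon\to0$ (the area integral converges because $\bp v\equiv0$ where $g$ is singular) gives $\E[W^+(v;U)X(z)\YY]=\frac{1}{2\pi i}\oint_{(z)}vg\,d\zeta=\E[\LL_v^+X(z)\YY]$, the last equality by the residue form. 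The conjugate computation applied to $W^-(v;U)=\overline{W^+(v;U)}$ produces the anti-holomorphic residue $\E[\LL_v^-X(z)\YY]$; adding the two and recalling $W(v;U)=2\Re\,W^+(v;U)$ yields $\E[W(v;U)X(z)\YY]=\E[\LL_vX(z)\YY]$ for all admissible $\YY$, which is the functional identity. Since the Stokes relation is an identity and not an inequality, reversing the steps recovers the residue form, closing the loop.

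The main obstacle I anticipate is the analytic bookkeeping in this Stokes/Cauchy--Pompeiu step. One must check that the area integral $\int\!\!\int_U(\bp v)\,g\,dA$ is genuinely convergent---this hinges precisely on $\bp v$ vanishing near the poles of $g$, so that the non-integrable singularities of the quadratic-differential correlation $g$ never meet $\mathrm{supp}(\bp v)$---track the orientation of the excised circles, and justify that $g$ is holomorphic away from its nodes, which uses that $A=A_{(b)}$ is a holomorphic quadratic differential whose only OPE singularities sit at the nodes. A secondary but essential point, already flagged before the statement, is that $A_{(b)}$ is continuous and real on $\pa D_r$: this reality is what makes the boundary integral in $W^+(v;U)$ and the relation $W^-=\overline{W^+}$ compatible, and it is what allows the holomorphic and anti-holomorphic halves to be assembled into the single operator $\LL_v=\LL_v^++\LL_v^-$ without boundary defects.
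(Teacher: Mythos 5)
The paper offers no proof of this proposition---it is imported verbatim from \cite[Propositions 5.3 and 5.10]{MR3052311}---and your argument is precisely the standard one from that reference: the Cauchy--Pompeiu/Stokes identity for the form $v\,g\,d\zeta$ with $g(\zeta)=\E[A(\zeta)X(z)\YY]$, exploiting that $\bp v$ vanishes in a neighborhood of every pole of $g$ (near $z$ by holomorphy of $v$, near the nodes of $\YY$ because they avoid $\mathrm{supp}(v)$), is exactly the mechanism that converts the residue form into the global functional identity, while the equivalence with Ward's OPE is the already-recorded chart statement. The argument is correct; the only inessential slip is the closing claim that reality of $A$ on $\pa D_r$ is needed to assemble $\LL_v=\LL_v^++\LL_v^-$ and $W=2\Re\,W^+$---these are definitions valid for arbitrary $U$ and arbitrary stress tensors, and boundary reality enters only later, in the proof of Proposition~\ref{Arep_ER}.
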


The definition of Ward's functionals can be extended to meromorphic vector fields. For a meromorphic vector field $v$ with poles $\{ p_k \}$ in $D_r$,
we define 
$$
W^{+}(v)=W^{+}(v; D_r):=\lim_{\ve \to 0}W^{+}(v; D_r^\ve),
$$
where $D_r^{\ve}=D_r \sm \bigcup_k B(p_k,\ve)$. 
Thus, we have 
$$
W^{+}(v)=W^+(v;D_r)	=	\frac{1}{2\pi i}\int_{\pa D_r}vA-\frac{1}{\pi} \int \!\! \int_{D_r}(\bp v)A,
$$
with the interpretation of $\bp v$ in the sense of distributions.

\begin{prop}\label{Arep_ER}
	In the identity chart $\id_{\bar{\CC}_r }$, we have
	$$
	2	A_{ \bfs{\beta} }(\zeta) = W^+(v_\zeta)+W^- (v_{\bar{\zeta}}) + \frac{1}{\pi}\int_{[-\pi+ir,\pi+ir]}	A_{ \bfs{\beta} }(\xi) \, d\xi,
	$$ 
	where $(v_\zeta \| \id_{\bar{\mathcal C}_r})(z)=H(r,\zeta-z)$.
\end{prop}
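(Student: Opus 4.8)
The plan is to evaluate the two Ward functionals $W^+(v_\zeta)$ and $W^-(v_{\bar\zeta})$ separately and then add them, exploiting the elementary observation that, viewed as a function of $z$ in a fundamental domain of $\CC_r$, the field $v_\zeta(z)=H(r,\zeta-z)$ has a single simple pole at $z=\zeta$, whereas $v_{\bar\zeta}(z)=H(r,\bar\zeta-z)$ has none: its candidate poles lie at $z\in\bar\zeta+2\pi\mathbb Z+2ir\mathbb Z$, all of which fall outside the strip $0<\Im z<r$ whenever $0<\Im\zeta<r$.

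First I would compute $W^+(v_\zeta)$. By \eqref{h-zero}, near $z=\zeta$ one has $v_\zeta(z)=-2/(z-\zeta)+O(1)$, so $\bp v_\zeta=-2\pi\delta_\zeta$ in the sense of distributions. Since $A=A_{(b)}$ is holomorphic in $\CC_r$ and continuous and real on $\pa\CC_r$, inserting this into the distributional form of the definition of the Ward functional yields
$$
W^+(v_\zeta)=\frac{1}{2\pi i}\int_{\pa\CC_r}v_\zeta A+2A(\zeta);
$$
equivalently, one excises an $\ve$-disk about $\zeta$ and reads off $\Res_{z=\zeta}(v_\zeta A)=-2A(\zeta)$.

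Next I would treat $W^-(v_{\bar\zeta})=\overline{W^+(v_{\bar\zeta})}$. Because $v_{\bar\zeta}$ is holomorphic on $\CC_r$, the functional $W^+(v_{\bar\zeta})$ reduces to the pure boundary integral $\frac{1}{2\pi i}\int_{\pa\CC_r}v_{\bar\zeta}A$, and the task is to conjugate it. Two properties of the Loewner kernel do the work: the reflection identity $\overline{H(r,w)}=H(r,\bar w)$, immediate from \eqref{h-zeta} together with the conjugation symmetry $\overline{\zeta_r(w)}=\zeta_r(\bar w)$ of the rectangular lattice $2\pi\mathbb Z+2ir\mathbb Z$ and the reality of $\zeta_r(\pi)$; and the quasi-periodicity $H(r,w+2ir)=H(r,w)-2i$. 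Parametrizing $\pa\CC_r$ by the bottom circle $z=x$ (oriented in the $+x$ direction) and the top circle $z=x+ir$ (oriented in the $-x$ direction), and using that $A$ is real there, these identities give $\overline{v_{\bar\zeta}(x)}=v_\zeta(x)$ on the bottom but $\overline{v_{\bar\zeta}(x+ir)}=v_\zeta(x+ir)-2i$ on the top. Conjugating and collecting, the bottom contribution reproduces $\tfrac{1}{2\pi i}\int_{\pa\CC_r}v_\zeta A$ exactly, while the $-2i$ from the top produces the extra term, so that
$$
W^-(v_{\bar\zeta})=-\frac{1}{2\pi i}\int_{\pa\CC_r}v_\zeta A-\frac{1}{\pi}\int_{[-\pi+ir,\pi+ir]}A(\xi)\,d\xi.
$$
Adding the two displays cancels the boundary integral and leaves the asserted identity.

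The step I expect to be delicate is the second one: keeping the two boundary circles' orientations straight and, above all, tracking the quasi-periodicity mismatch $H(r,w+2ir)=H(r,w)-2i$ between them. This mismatch is precisely the source of the modular (Teichm\"uller) term $\tfrac1\pi\int_{[-\pi+ir,\pi+ir]}A$; in the simply connected setting both boundary pieces carry the exact reflection symmetry, the correction vanishes, and one recovers the familiar $2A(\zeta)=W^+(v_\zeta)+W^-(v_{\bar\zeta})$.
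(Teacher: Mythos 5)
Your argument is correct and is essentially the paper's own proof: you exploit that $v_{\bar\zeta}$ (the paper's reflected field $v_\zeta^\sharp$) is holomorphic in $\CC_r$, extract $2A(\zeta)$ from $\bp v_\zeta=-2\pi\delta_\zeta$, and use the reality of $A$ on $\pa\CC_r$ together with the quasi-periodicity $H(r,w+2ir)=H(r,w)-2i$ to generate the modular term from the top boundary while the two boundary integrals cancel. The only cosmetic difference is bookkeeping: the paper writes $W^-(v_{\bar\zeta})=-\tfrac{1}{2\pi i}\int_{\pa\CC_r}v_\zeta(\bar\xi)A(\xi)\,d\xi$ and evaluates the combination $v_\zeta(\xi)-v_\zeta(\bar\xi)$ on each boundary component, whereas you convert $\overline{v_{\bar\zeta}}$ pointwise on each circle; the computation is the same.
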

\begin{proof} 
	Recall that a stress tensor $A_{ \bfs{\beta} }$
	is a holomorphic quadratic differential in $\mathcal C_r$, and real on the boundary. For $\zeta \in \mathcal C_r$, the reflected vector field $v_\zeta^\sharp$ is defined by
	$$
	v_\zeta^\sharp(z):=\overline{v_\zeta(\bar{z}) }=v_{\bar{\zeta}}(z).
	$$
	Since $v_\zeta$ is meromorphic on $\mathcal C_r$ with poles at $\{ \zeta+2\pi m+2ir n : m,n \in \mathbb Z \}$, the reflected vector field $v_\zeta^\sharp$ is holomorphic on $\mathcal C_r$. Then by definition,
	$$
	W^+(v_\zeta)=- \frac{1}{\pi}\int_{\mathcal C_r}(\bp v_\zeta)	A_{ \bfs{\beta} }+ \frac{1}{2\pi i}\int_{\pa \mathcal C_r}v_{\zeta}	A_{ \bfs{\beta} },
	\qquad W^+(v_{\zeta}^{\sharp})=\frac{1}{2\pi i}\int_{\pa\mathcal C_r}v_\zeta^{\sharp}	A_{ \bfs{\beta} }.
	$$
	Also, since $	A_{ \bfs{\beta} }=\bar{	A}_{ \bfs{\beta} }$ on $\pa \mathcal C_r$, we have
	$$
	W^-(v_{\bar{\zeta}})=\overline{W^{+}(v_{\zeta}^{\sharp})}=\overline{\frac{1}{2\pi i}\int_{\pa\mathcal C_r }v_{\bar{\zeta}} (\xi )	A_{ \bfs{\beta} }(\xi)\, d\xi}=
	-\frac{1}{2\pi i}\int_{\pa\mathcal C_r}v_\zeta (\bar{\xi} )	A_{ \bfs{\beta} }(\xi)\, d\xi.
	$$
	Then it follows from $\bp v_\zeta=-2\pi \delta_\zeta$, and $H(r,z+2ir)=H(r,z)-2i$ that
	\begin{align*}
		W^+(v_\zeta)+W^- (v_{\bar{\zeta}})
		&=- \frac{1}{\pi}\int_{\mathcal C_r}(\bp v_\zeta)	A_{ \bfs{\beta} }-\frac{1}{\pi}\int_{[-\pi+ir,\pi+ir]}	A_{ \bfs{\beta} }(\xi)\, d\xi 
		= 2	A_{ \bfs{\beta} }(\zeta) -\frac{1}{\pi}\int_{[-\pi+ir,\pi+ir]}	A_{ \bfs{\beta} }(\xi)\, d\xi.
	\end{align*}
	This completes the proof.
\end{proof}

\subsection{Eguchi-Ooguri equations} \label{EO subsec}

We now state and prove a version of Eguchi-Ooguri equations. 

First let us consider the case $\bfs\beta=\bfs 0$.
Recall that $\mathcal{F}_{(b)} = \mathcal{F}_{(b),\bfs0}$ and $ \mathcal{F}_{(b),\bfs\beta}$ is the OPE family generated by $\Phi_{(b),\bfs\beta}$. 

\begin{lem} \label{Bint_ER} For any $\XX \in \FF_{(b)}$, in the $\CC_r$-uniformization, 
	\begin{equation} \label{bW}
		\frac{1}{\pi}\oint_{[-\pi+ir,\pi+ir]}\E A(\zeta)\XX \,d\zeta=\pa_r\E \XX.
	\end{equation}
\end{lem}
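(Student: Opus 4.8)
The plan is to prove \eqref{bW} first for a single multi-vertex field $\OO\equiv\OO_{(b)}[\bfs\sigma,\bfs\sigma_*]$ and then extend to all of $\FF_{(b)}$ by linearity. Since a product of vertex fields at distinct nodes is again governed by the Coulomb gas expression \eqref{Coulomb ftn} with the combined divisor, and since the generators of $\FF_{(b)}$ (the current, the GFF, and their derivatives) are recovered by differentiating OPE exponentials in the charges $\sigma_j,\sigma_{*j}$, it is enough to verify \eqref{bW} for $\OO$ with arbitrary charges: both $\pa_r$ and $\frac1\pi\oint_{\R_r}A(\zeta)\,(\cdot)\,d\zeta$ are linear and commute with these parameter-differentiations.

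For the left-hand side I would compute $\E[A(\zeta)\OO]$ in the identity chart by Wick's calculus, exactly as in the proof of Proposition~\ref{WOPE:V_ER}. Writing $x_\alpha$ for the nodes $z_j$ together with their reflections $\bar z_j$, and $c_\alpha$ for the matching charges $\sigma_j,\sigma_{*j}$, and using $A_{(b)}=-\tfrac12 J_{(0)}\odot J_{(0)}+ib\,\pa J_{(0)}$ (as $j_{(b)}=0$ in the identity chart), this gives
$$\frac{\E[A(\zeta)\OO]}{\E[\OO]}=\frac18\Big(\sum_\alpha c_\alpha H(\zeta-x_\alpha)\Big)^2+\frac b2\sum_\alpha c_\alpha H'(\zeta-x_\alpha).$$
Upon integrating over $\R_r$ the $H'$-terms drop out, since $\oint_{\R_r}H'(\zeta-a)\,d\zeta=0$ by the $2\pi$-periodicity of $H$; likewise $\oint_{\R_r}H(\zeta-a)\,d\zeta$ is a constant independent of $a$ for $\Im a\in(-r,r)$, by the periodicity relation \eqref{theta per}. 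The crux is the quadratic term: applying the pseudo-addition identity \eqref{add H} with $z=\zeta-a$, $w=\zeta-b$, and using $\tfrac12 H^2+H'=2\,\Theta''/\Theta$, I would reduce, for $a\neq b$,
$$\oint_{\R_r}H(\zeta-a)H(\zeta-b)\,d\zeta=Q+2\pi\Big(2\tfrac{\Theta''}{\Theta}(a-b)+\tfrac6\pi\,\zeta_r(\pi)\Big),\qquad Q:=\oint_{\R_r}H(\zeta-a)^2\,d\zeta,$$
where $Q$ is again independent of $a$.

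The key observation is that I never need to evaluate the constant $Q$. Expanding $\big(\sum_\alpha c_\alpha H(\zeta-x_\alpha)\big)^2$ and integrating, the total coefficient of $Q$ is $\sum_\alpha c_\alpha^2+2\sum_{\alpha<\beta}c_\alpha c_\beta=\big(\sum_\alpha c_\alpha\big)^2$, which vanishes by the neutrality condition \eqref{NC0_ER}. What survives is
$$\frac1\pi\oint_{\R_r}\frac{\E[A(\zeta)\OO]}{\E[\OO]}\,d\zeta=\sum_{\alpha<\beta}c_\alpha c_\beta\frac{\Theta''}{\Theta}(x_\alpha-x_\beta)-\frac{3\zeta_r(\pi)}{2\pi}\sum_j\big(\sigma_j^2+\sigma_{*j}^2\big),$$
where \eqref{NC0_ER} has also been used to rewrite $\sum_{\alpha<\beta}c_\alpha c_\beta=-\tfrac12\sum_\alpha c_\alpha^2$ in the $\zeta_r(\pi)$-term. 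For the right-hand side I would differentiate the Coulomb gas formula \eqref{Coulomb ftn} for $\E[\OO]$ in $r$ using the heat equation \eqref{heat eq. Theta}: this yields $\pa_r\log\Theta(x_\alpha-x_\beta)=\Theta''/\Theta(x_\alpha-x_\beta)$ for each pair factor and $\pa_r\log\Theta'(0)=\Theta'''(0)/\Theta'(0)$ for the prefactor. Comparing the two sides, the pair-sums match termwise, and the constants agree precisely because $\Theta'''(0)/\Theta'(0)=-3\zeta_r(\pi)/\pi$ by \eqref{Mod inv}, so the two sides of \eqref{bW} coincide.

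The main obstacle is the diagonal term: $\oint_{\R_r}H(\zeta-a)^2\,d\zeta$ has no simple closed form, so the computation must be organized so that it cancels, which is exactly what the neutrality condition \eqref{NC0_ER} accomplishes. A secondary technical point is the bookkeeping around the quasi-periodicity $H(r,z+2ir)=H(r,z)-2i$ when justifying the elementary integrals $\oint_{\R_r}H$, $\oint_{\R_r}H'$ and the $a$-independence of $Q$; here one checks that $\R_r$ crosses no pole of $H(\cdot-a)$ as $\Im a$ ranges over $(-r,r)$, which uniformly covers both the nodes and their reflections. Finally, the Dirichlet case is handled identically with $\Theta$, $H$ replaced by $\tilde\Theta$, $\tilde H$ and the corresponding modification term carried along; the structure of the argument, and in particular the decisive role of \eqref{NC0_ER}, is unchanged.
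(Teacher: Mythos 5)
Your core computation checks out, and it is organized differently from the paper's: you verify \eqref{bW} directly on a multi-vertex field $\OO[\bfs\sigma,\bfs\sigma_*]$ and let the neutrality condition $(\sum_\alpha c_\alpha)^2=0$ kill the non-computable diagonal integral $Q=\oint_{\R_r}H(\zeta-a)^2\,d\zeta$, whereas the paper starts from $\XX=\Phi(z_1)\Phi(z_2)$ and lets the alternating $z,\bar z$ structure of the Green's function cancel the analogous diagonal terms before integrating. Both arguments ultimately rest on the pseudo-addition formula \eqref{add H} and the heat equation \eqref{heat eq. Theta}, and I confirm your identities $\tfrac12H^2+H'=2\Theta''/\Theta$, $\pa_r\log\Theta'(0)=\Theta'''(0)/\Theta'(0)=-3\zeta_r(\pi)/\pi$, and the matching of the pair sums; the Dirichlet modifications ($\oint_{\R_r}\tilde H'\ne 0$, the cross terms from $z/r$, the $-{(x_\alpha-x_\beta)^2}/{4r^2}$ correction to $\pa_r\log\tilde\Theta$) do all cancel against each other by neutrality as you assert, though you should say so explicitly rather than waving at ``the corresponding modification term.''

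The genuine gap is in the reduction ``it is enough to verify \eqref{bW} for $\OO$ with arbitrary charges.'' Linearity plus differentiation in the charges (along neutral directions) recovers the generators of $\FF_{(b)}$ and their \emph{tensor} products at distinct nodes --- i.e., the content of the paper's Claims 1 and 2 --- but $\FF_{(b)}$ is by definition the span of the generators under the \emph{OPE} multiplications $\ast_n$. Fields such as $T_{(b)}=-\tfrac12 J\ast J+ib\,\pa J$, or any OPE coefficient $C_n=X\ast_n Y$ evaluated at a single point, are limits of tensor products as nodes merge (after subtraction of singular parts), not finite linear combinations of them, so they are not reached by your argument. Since the lemma is later applied to strings containing exactly such fields (e.g.\ in Corollary~\ref{Wequation_ER}), you must additionally show that the class of fields satisfying \eqref{bW} is closed under $\ast_n$; this is the paper's Claim 3, proved by expanding both $\frac1\pi\oint_{\R_r}\E[A(\xi)X(\zeta)Y(z)]\,d\xi$ and $\pa_r\E[X(\zeta)Y(z)]$ in powers of $\zeta-z$ and comparing coefficients (which requires justifying the interchange of the $\zeta\to z$ expansion with $\oint_{\R_r}$ and with $\pa_r$). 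Without this closure step your proof covers only a proper subfamily of $\FF_{(b)}$.
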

\begin{proof}
	Let us denote by $\mathcal{W}$ the family of Fock space fields satisfying \eqref{bW}. 
	We show $\mathcal F_{(b)} \subset \mathcal{W}$ through the following steps.
	
	\medskip
	\noindent\textbf{Step 1.} A tensor product of two GFFs is in $\mathcal W$.
	
	\medskip
	\noindent\textbf{Step 2.} All tensor products of GFFs and their derivatives are in $\mathcal W$.
	
	\medskip
	\noindent\textbf{Step 3.} The family $\mathcal W$ is closed under linear combinations over $\C$ and OPE products $\ast_n$.
	
	\medskip
	
	\noindent \emph{Proof of Step 1.} 
	Let us denote $A\equiv A_{(b)}$, $\Phi \equiv \Phi_{(b)}$. Recall that in the identity chart $\id_{\mathcal C_r}$, $ A=-\frac{1}{2} J_{(0)}\odot J_{(0)}+ib\pa J_{(0)}$, and $\Phi=\Phi_{(0)}$. Since $\pa J$ is exact, by the fundamental theorem of calculus, and the fact that all correlation functions lifted to the covering space of $\CC_r$ are 2$\pi$ periodic, it suffices to show the case $b=0$. 
	
	First, we show this step for the ER boundary condition (i.e., $\chi=\infty$).
	For $\mathcal{X}=\Phi(z_1)\Phi(z_2)$, by \eqref{2pt cor_ER} and the fact 
	$\pa_r \log \Theta= H'/2+H^2/4$ (which follows from \eqref{H Theta} and \eqref{heat eq. Theta}), we have
	\begin{align*}
		\pa_r \E [\Phi(z_1)\Phi(z_2) ] 
		= -\RN{1}(z_1,z_2)+\RN{1}(\bar{z}_1,z_2)+\RN{1}(z_1,\bar{z}_2)-\RN{1}(\bar{z}_1,\bar{z}_2),
	\end{align*}
	where $\RN{1}(z,w):=H'(z-w)/2+H^2(z-w)/4$.
	On the other hand, by Wick's calculus and \eqref{2pt cor CG_ER}, we have
	\begin{equation*}
		\E[A(\zeta)\Phi(z_1)\Phi(z_2)]
		=-\RN{2}_{z_1,z_2}(\zeta)+\RN{2}_{\bar{z}_1,z_2}(\zeta)+\RN{2}_{z_1,\bar{z}_2}(\zeta)-\RN{2}_{\bar{z}_1,\bar{z}_2}(\zeta),
	\end{equation*}
	where 
	$\RN{2}_{z,w}(\zeta):=H(\zeta-z)H(\zeta-w)/4$. Note that by \eqref{add H}, $\RN{2}_{z,w}(\zeta)$ can be rewritten as 
		\begin{align} \label{123 rel}
	\RN{2}_{z,w}(\zeta)=\frac{1}{2}\RN{1}(z,w)+\frac{1}{4}\RN{3}_{z,w}(\zeta)+ \frac{1}{8}H^2(\zeta-z)+\frac{1}{8}H^2(\zeta-w)+\frac{3\zeta_r(\pi)}{2\pi},
	\end{align}
	where 
	$$
	\RN{3}_{z,w}(\zeta):=H'(\zeta-z)+H'(\zeta-w)+( H(\zeta-w)-H(\zeta-z) ) H(w-z).
	$$
	Combining all of the above, we obtain 
	\begin{align*}
		\E[A(\zeta)\Phi(z_1)\Phi(z_2)]&= \frac{1}{2} \pa_r \E [\Phi(z_1)\Phi(z_2)]
		-\frac{1}{4} \Big( \RN{3}_{z_1,z_2}(\zeta)-\RN{3}_{\bar{z}_1,z_2}(\zeta)-\RN{3}_{z_1,\bar{z}_2}(\zeta)+\RN{3}_{\bar{z}_1,\bar{z}_2}(\zeta) \Big). 
	\end{align*}
	Finally, observe that by \eqref{H Theta} and \eqref{theta per}, we have
	$\oint_{[-\pi+ir,\pi+ir]} \RN{3}_{z,w}(\zeta) \, d\zeta \equiv 0$, 
	which leads to \eqref{bW} for $\mathcal X=\Phi(z_1)\Phi(z_2)$ with ER boundary condition. 
	
	We now show this step for the weighted boundary conditions with any $\chi \in [0,\infty)$. For notational convenience, we add the superscript $``ER"$ for the field with ER boundary condition, e.g., $\Phi^{ER}, A^{ER}$ etc. 
 Then by \eqref{Green interpol} and the periodicity of $H$, we have
	\begin{align*}
	\begin{split}
	\frac{1}{\pi}\oint \Big( \E [ A(\zeta) \Phi(z_1)\Phi(z_2) ]-\E[ A^{ER}(\zeta) \Phi^{ER}(z_1)\Phi^{ER}(z_2) ] \Big) \, d\zeta =-\frac{(z_1-\bar{z}_1)(z_2-\bar{z}_2)}{2\,(r+\chi)^2}.
	\end{split}
	\end{align*}
	On the other hand, 
	\begin{align*}
	\begin{split}
	\pa_r \Big( \E \Phi(z_1)\Phi(z_2)- \E \Phi^{ER}(z_1)\Phi^{ER}(z_2) \Big) =
	\pa_r \Big( \frac{1}{r+\chi} \Big) \frac{ (z_1-\bar{z}_1)(z_2-\bar{z}_2)}{2}.
	\end{split}
	\end{align*}
	Combining all of the above identities, the claim follows. 
	\medskip 
	
	\noindent \emph{Proof of Step 2.} 
	There is nothing to prove for $\mathcal X=\Phi(z_1)\cdots\Phi(z_{2n-1})$. Therefore it is enough to show \eqref{bW} for $\mathcal X=\Phi(z_1)\cdots\Phi(z_{2n})$.
	For $j \neq k$, let us denote 
	$$
	\mathcal{X}_{jk}=\Phi(z_1)\cdots\Phi(z_{k-1})\Phi(z_{k+1})\cdots\Phi(z_{j-1})\Phi(z_{j+1})\cdots \Phi(z_{2n}).
	$$
	Then by Wick's formula and an induction argument, we have 
	\begin{align*}
		\frac{1}{\pi}\int\E [A(\zeta)\Phi(z_1)\cdots\Phi(z_{2n}) ]\, d\zeta
		&= \frac{1}{\pi}\int\sum_{j \neq k}\E [A(\zeta)\Phi(z_j)\Phi(z_k) ] \E[\mathcal{X}_{jk}] \, d\zeta 
	= \sum_{j \neq k} \pa_r\E[\Phi(z_j)\Phi(z_k)]\E[\mathcal{X}_{jk}].
	\end{align*}
	On the other hand, by \eqref{n-pt cor} and Leibniz's rule, we express the left-hand side of \eqref{bW} as
	\begin{align*}
		\pa_r \E [ \Phi(z_1) \cdots \Phi(z_{2n}) ] &= \pa_r \sum \prod_k \E [ \Phi(z_{i_k})\Phi(z_{j_k}) ]
		\\
		&= \sum \Big( \prod_k \E [ \Phi(z_{i_k})\Phi(z_{j_k}) ] \Big) \Big( \sum_k \frac{ \pa_r \E [ \Phi(z_{i_k})\Phi(z_{j_k}) ]}{\E [ \Phi(z_{i_k})\Phi(z_{j_k}) ]}\Big),
	\end{align*}
	where the outer sum is taken over all partitions of the set $\{ 1,\cdots ,2n \}$ into disjoint pairs $\{ i_l ,j_l\}$. 
	Rearranging the terms, we rewrite the above identity as
	$$
	\pa_r \E [ \Phi(z_1) \cdots \Phi(z_{2n}) ]=\sum_{j \neq k} \Big(\pa_r\E[\Phi(z_j)\Phi(z_k)]\Big) \Big(\sum \prod_l \E [ \Phi(z_{i_l})\Phi(z_{j_l}) ]\Big),
	$$
	where the sum is over all partitions of the set $\{ 1,\cdots ,2n \} \sm \{ j,k\}$ into disjoint pairs $\{ i_k ,j_k\}$. Therefore again, by \eqref{n-pt cor}, we have \eqref{bW} for $\mathcal X=\Phi(z_1)\cdots\Phi(z_{2n})$. Differentiating term by term, we complete the proof of Step 2. 
	
 \medskip 
	
	\noindent \emph{Proof of Step 3.} The first part of this step is obvious. For the second part, we consider the case that the field $X$ is holomorphic. The other cases can be treated in a similar way. Suppose that we have the following OPE
	$$
	X(\zeta)Y(z)=\sum C_n(z)(\zeta -z)^n,\qquad \zeta \to z.
	$$ 
	If both $X$ and $Y$ are in $\mathcal W$, then the term by term integration gives
	$$
	\frac{1}{\pi}\int_{[-\pi+ir,\pi+ir]} \E[A(\xi)X(\zeta)Y(z)] \, d\xi =\sum_n \Big( \frac{1}{\pi}\int_{[-\pi+ir,\pi+ir]}\E[A(\xi)C_n(z)]\, d\xi \Big) (\zeta -z)^n
	$$
	as $\zeta \to z$. On the other hand, differentiating term by term, we have
	$$
	\pa_r \E[X(\zeta)Y(z)]=\sum\pa_r\E[C_n(z)](\zeta-z)^n.
	$$ 
	Therefore, by comparing the coefficients of the two expansions above, we conclude $C_n \in \mathcal W$.
\end{proof}

\begin{rmk*} 
Let us consider the GFF $\Phi$ with 2-point correlation function given by a probabilistic interpolation (via Bernoulli distribution with probability $p \equiv p(r)$) of Green's functions with ER and Dirichlet boundary conditions, i.e., 
	\begin{align*}
	\E\Phi(\zeta)\Phi(z)&=2\Big( p(r)G^{Diri}_r(\zeta,z)+(1-p(r))G^{ER}_r(\zeta,z) \Big)
=2\log\Big| \frac{\Theta(r,\zeta-\bar{z})}{\Theta(r,\zeta-z)} \Big|-\frac{2\,p(r)}{r} \, \Im\,\zeta \, \Im\,z.
	\end{align*}
Then the conclusion of Lemma~\ref{Bint_ER} can be drawn if and only if $\pa_r(r/p(r))=1$. In other words, to derive Eguchi-Ooguri type equation of the form \eqref{bW}, it is required that 
	\begin{equation}
	p(r)=r/(r+\chi), \qquad (\chi \ge 0).
	\end{equation}
This corresponds to the rational interpolation \eqref{Green interpol}. 
\end{rmk*}

Let us emphasize that Lemma~\ref{Bint_ER} holds only for the non-chiral field $\XX$. 
To our purpose of implementing Ward's equation for the OPE family of GFFs associated with the general background charge, we now extend Eguchi-Ooguri equations to a certain family of chiral fields.
We write $(\bfs{{\sigma}},\bfs{{\sigma_*}})=(\sum \sigma_j \cdot z_j ,\sum \sigma_{*j} \cdot z_j )$ for a double divisor satisfying $(\mathrm{NC}_0)$ and denote by $\OO\equiv \OO_{(b)}{[\bfs{\sigma},\bfs{\sigma_*}]}$ the associated multi-vertex field. 
Due to the following proposition, we can revise the definition of $\FF_{(b)}$ so that it includes the fields of the form $\OO \XX$. 
\begin{prop} \label{E-O eq.} Let $\gamma=[-\pi+ir,\pi+ir]$. Then for any $\XX \in \FF_{(b)}$, 
	\begin{equation} \label{E-O}
	\frac{1}{\pi}\oint_{\gamma}\E A(\zeta) \OO \XX \, d\zeta=\pa_r\E \, \OO\XX,
	\end{equation}
	where all fields are evaluated in the identity chart of $\mathcal C_r$. 
\end{prop}
\begin{proof}
We present the proof for ER boundary conditions and leave it to the reader for the general case.
	
	Again it suffices to show the case $b=0$. In the sequel, we omit the subscript $(0)$ and write for instance $A\equiv A_{(0)}, C \equiv C_{(b)}$. Throughout the proof, we use symbols $\RN{1}(z,w), \RN{2}_{z,w}(\zeta)$ and $\RN{3}_{z,w}(\zeta)$ given in the proof of Lemma~\ref{Bint_ER}.
	
	By definition of the multi-vertex fields, \eqref{E-O} is equivalent to 
	\begin{align} \label{E-O_1}
	\begin{split}
	\frac{1}{\pi}\oint_{\gamma}\E A(\zeta) V^\odot[\bfs\sigma,\bfs\sigma_*] \XX \, d\zeta=\frac{\pa_r C{[\bfs{\sigma},\bfs{\sigma_*}]} }{C{[\bfs{\sigma},\bfs{\sigma_*}]}}\E V^\odot[\bfs\sigma,\bfs\sigma_*] \XX
	+\pa_r \E V^\odot[\bfs\sigma,\bfs\sigma_*] \XX,
	\end{split}
	\end{align}
	where $V^\odot[\bfs\sigma,\bfs\sigma_*]=e^{\odot i\Phi[\bfs{{\sigma}},\bfs{{\sigma_*}}]}$. Using similar arguments in the proof of Lemma~\ref{Bint_ER}, it suffices to show the identity \eqref{E-O_1} for tensor products of GFFs. 
	
	Let us first show that the identity \eqref{E-O_1} holds for trivial string $\XX=1$.
	Combining \eqref{Coulomb ftn} with \eqref{H Theta}, \eqref{heat eq. Theta} and \eqref{Mod inv}, we have 
	\begin{align}\label{pa r logC}
	\begin{split}
\pa_r \log C{[\bfs{\sigma},\bfs{\sigma_*}]}	&=- \frac{3\zeta_r(\pi)}{2\pi} \sum (\sigma_j^2+\sigma_{*j}^2) +\sum \sigma_{j}\sigma_{*j} \RN{1}(z_j,\bar{z}_j)
	\\
	&+\sum_{ j<k} \sigma_j\sigma_k \RN{1}(z_j,z_k)+\sigma_{*j}\sigma_k \RN{1}(\bar{z}_j,z_k)+\sigma_j\sigma_{*k}\RN{1}(z_j,\bar{z}_k)+\sigma_{*j}\sigma_{*k}\RN{1}(\bar{z}_j,\bar{z}_k).
	\end{split}
	\end{align}
	Differentiating \eqref{formal ++ cor} and \eqref{formal +- cor}, we have
	\begin{align}\label{J bos}
	\E J(\zeta) \Phi\,[\bfs{{\sigma}},\bfs{{\sigma_*}}]
	=-\dfrac{1}{2} \sum \Big( \sigma_j H(\zeta-z_j)+\sigma_{*j} H(\zeta-\bar{z}_j) \Big) .
	\end{align}
	By Wick's formula and \eqref{J bos}, 
	\begin{align} \label{pre chi0}
	\begin{split}
	 \E A(\zeta) V^\odot[\bfs\sigma,\bfs\sigma_*] &=\frac{1}{8} \sum \sigma_j^2 H^2(\zeta-z_j)+\sigma_{*j}^2 H^2(\zeta-\bar{z}_j)+\sum \sigma_{j}\sigma_{*j} \RN{2}_{z_j,\bar{z}_j}(\zeta)
	\\
	&+\sum_{ j<k} \sigma_j\sigma_k \RN{2}_{z_j,z_k}(\zeta)+\sigma_{*j}\sigma_k \RN{2}_{\bar{z}_j,z_k}(\zeta)+\sigma_j\sigma_{*k} \RN{2}_{z_j,\bar{z}_k}(\zeta)+\sigma_{*j}\sigma_{*k} \RN{2}_{\bar{z}_j,\bar{z}_k}(\zeta).
	\end{split}
	\end{align}
 Then it follows from \eqref{123 rel} and the neutrality condition \eqref{NC0_ER} that
	\begin{align*}
	&\quad \E A(\zeta) V^\odot[\bfs\sigma,\bfs\sigma_*]-\frac{1}{2}\pa_r \log C{[\bfs{\sigma},\bfs{\sigma_*}]}
	\\
	&=\frac{1}{4}\sum_{ j<k} \sigma_j\sigma_k \RN{3}_{z_j,z_k}(\zeta)+\sigma_{*j}\sigma_k \RN{3}_{\bar{z}_j,z_k}(\zeta)+\sigma_j\sigma_{*k} \RN{3}_{z_j,\bar{z}_k}(\zeta)+\sigma_{*j}\sigma_{*k} \RN{3}_{\bar{z}_j,\bar{z}_k}(\zeta).
	\end{align*}
	Now claim follows from the fact that 
	$\oint_{\gamma} \RN{3}_{z,w}(\zeta) \, d\zeta \equiv 0$. 
 
 Next, we show that the identity \eqref{E-O_1} holds for the case $\XX=\Phi$.
 By Wick's formula, we have
	\begin{align}
	\begin{split}
	\label{V phi}
	\E V^\odot[\bfs\sigma,\bfs\sigma_*]\Phi(z)=i\, \E \Phi\,[\bfs{{\sigma}},\bfs{{\sigma_*}}] \Phi(z)
=i \sum \sigma_j \log \frac{\Theta(z_j-\bar{z})}{\Theta(z_j-z)}+\sigma_{*j} \log \frac{\Theta(\bar{z}_j-\bar{z})}{\Theta(\bar{z}_j-z)} .
	\end{split}
	\end{align}
	Therefore by \eqref{heat eq. Theta}, 
	$$
	\pa_r \E V^\odot[\bfs\sigma,\bfs\sigma_*]\Phi(z)=i \sum \sigma_j \Big(\RN{1}(z_j,\bar{z})-\RN{1}(z_j,z) \Big) + \sigma_{*j} \Big(\RN{1}(\bar{z}_j,\bar{z})-\RN{1}(\bar{z}_j,z) \Big) .
	$$
	
	On the other hand, by \eqref{pre chi0} and \eqref{V phi},
	\begin{align} \label{pre chi1}
	\begin{split}
	\E A(\zeta) V^\odot[\bfs\sigma,\bfs\sigma_*]\Phi(z)
	= F(\zeta,z,\bfs{z})-iG(\zeta,z,\bfs{z}),
	\end{split}
	\end{align}
	where
	\begin{align*}
	F(\zeta,z,\bfs{z})=\E A(\zeta) V^\odot[\bfs\sigma,\bfs\sigma_*] \E V^\odot[\bfs\sigma,\bfs\sigma_*]\Phi(z),
	\qquad 
	G(\zeta,z,\bfs{z})=\E J(\zeta)\Phi(z) \E J(\zeta) \Phi\,[\bfs{{\sigma}},\bfs{{\sigma_*}}].
	\end{align*}
	Note that 
	\begin{align}
	\begin{split}
	\frac{1}{\pi}\oint_{\gamma} F(\zeta,z,\bfs{z})\,d\zeta= \frac{1}{\pi}\oint_{\gamma} \E A(\zeta) V^\odot[\bfs\sigma,\bfs\sigma_*] \, d\zeta \, \E V^\odot[\bfs\sigma,\bfs\sigma_*]\Phi(z)
	=\frac{\pa_r C{[\bfs{\sigma},\bfs{\sigma_*}]} }{C{[\bfs{\sigma},\bfs{\sigma_*}]}}\E V^\odot[\bfs\sigma,\bfs\sigma_*] \Phi(z).
	\end{split}
	\end{align}
	Thus it follows from the neutrality condition \eqref{NC0_ER} and \eqref{J bos} that 
	\begin{align*}
	G(\zeta,z,\bfs{z})&=\sum \sigma_{j}\Big( \RN{2}_{z_j,z}(\zeta)-\RN{2}_{z_j,\bar{z}}(\zeta) \Big)+ \sigma_{*j}\Big( \RN{2}_{\bar{z}_j,z}(\zeta)-\RN{2}_{\bar{z}_j,\bar{z}}(\zeta) \Big) 
	\\
	&=\frac{1}{2}\sum \sigma_{j}\Big( \RN{1}(z_j,z)-\RN{1}(z_j,\bar{z}) \Big)+\sigma_{*j}\Big( \RN{1}(\bar{z}_j,z)-\RN{1}(\bar{z}_j,\bar{z}) \Big) 
	\\
	&\quad +\frac{1}{4}\sum \sigma_{j}\Big( \RN{3}_{z_j,z}(\zeta)-\RN{3}_{z_j,\bar{z}}(\zeta) \Big)+ \sigma_{*j}\Big( \RN{3}_{\bar{z}_j,z}(\zeta)-\RN{3}_{\bar{z}_j,\bar{z}}(\zeta) \Big) .
	\end{align*}
	Therefore we obtain
	\begin{align} \label{int G}
	\begin{split}
	-\frac{i}{\pi}\oint_{\gamma} G(\zeta,z,\bfs{z})\, d\zeta&=\pa_r \E V^\odot[\bfs\sigma,\bfs\sigma_*]\Phi(z),
	\end{split}
	\end{align}
	which leads to \eqref{E-O_1} for $\XX=\Phi$.
	
	Now Proposition for the tensor products of GFFs follows from a similar argument of Claim 2 in the proof of Lemma~\ref{Bint_ER}. We leave it to the reader as an exercise. 
\end{proof}

In the physics literature, instead of a stress tensor $A$, the Virasoro field $T$ is often utilized to state Ward's equation. 
To describe Eguchi-Ooguri equation in terms of the Virasoro field, let us define \emph{effective multi-vertex field} $\OO^{\textup{eff}}\equiv\OO_{(b)}^{\textrm{eff}}{[\bfs{\sigma},\bfs{\sigma_*}]}$ as 
\begin{equation} \label{Z_r}
\OO_{(b)}^{\textrm{eff}}{[\bfs{\sigma},\bfs{\sigma_*}]}:=\OO_{(b)}{[\bfs{\sigma},\bfs{\sigma_*}]} Z_r, \qquad 	Z_r=\Theta'(r,0)^{-1/3} (r+\chi)^{-1/2}.
\end{equation}
We remark that the effective multi-vertex field plays an import role in describing the restriction observables in a doubly connected domain \cite{MR1992830,MR2343934,dubedat2004critical,MR2902453}. 

By Proposition~\ref{E-O eq.}, we obtain following corollary.
\begin{cor} \label{E-O eq.T} Let $\gamma=[-\pi+ir,\pi+ir]$. Then for any $\XX \in \FF_{(b)}$, 
	\begin{equation} \label{E-O T}
	\frac{1}{\pi}\oint_{\gamma}\E T(\zeta) \OO^{\textup{eff}} \XX \, d\zeta=\pa_r\E \OO^{\textup{eff}}\XX,
	\end{equation}
	where all fields are evaluated in the identity chart of $\mathcal C_r$. 
\end{cor}
\begin{proof}
	By Proposition~\ref{E-O eq.}, we have
	\begin{align*}
	&\quad \frac{1}{\pi}\oint_{\gamma}\E T(\zeta) \OO^{\textrm{eff}} \XX \, d\zeta=\Big( \frac{1}{\pi}\oint_{\gamma}\E T(\zeta) \OO \XX \, d\zeta \Big) Z_r
	\\
	&=\Big( \frac{1}{\pi}\oint_{\gamma}\E A(\zeta) \OO \XX \, d\zeta\Big)Z_r + \Big( \frac{1}{\pi}\oint_{\gamma}\E T(\zeta) \, d\zeta \Big) \E \OO\mathcal{X} Z_r
	= \Big( \pa_r \E \OO\XX \Big)Z_r +2 \E T \, \E \OO\mathcal{X} Z_r.
	\end{align*}
	Note that by Proposition~\ref{Virasoro2_ER}, \eqref{Mod inv} and \eqref{Z_r}, we have 
	\begin{equation}
		\E T=-\frac{1}{6}\frac{\Theta'''(r,0)}{\Theta'(r,0)}-\frac{1}{4(r+\chi)}=\frac12 \pa_r \log Z_r. 
	\end{equation}
 Therefore we conclude that 
	$$
	\frac{1}{\pi}\oint_{\gamma}\E T(\zeta) \OO^{\textrm{eff}} \XX \, d\zeta=\Big( \pa_r \E \OO\XX \Big)Z_r+ \E \OO\mathcal{X} \pa_r Z_r = \pa_r\E \OO^{\textup{eff}}\XX,
	$$
	which completes the proof. 
\end{proof}

By \eqref{insertion bcm gen}, one can see that the background charge modifications can be treated as an insertion of a multi-vertex field. (See Subsection~\ref{one-leg subsec} for further details.)
Therefore, as a consequence of Proposition~\ref{E-O eq.}, we extend Lemma~\ref{Bint_ER} to the OPE family $\FF_{\bfs{\beta}}$ with non-trivial background charges $\bfs{\beta}$. 
\begin{cor} \label{EO lem} For any $\XX \in \FF_{\bfs{\beta}}$, in the $\CC_r$-uniformization, 
	\begin{equation} \label{EO eq}
	\frac{1}{\pi}\oint_{[-\pi+ir,\pi+ir]}\E A_{\bfs{\beta}}(\zeta)\XX \, d\zeta=\pa_r\E \XX. 
	\end{equation}
\end{cor}

\subsection{Ward's equations} \label{Ward's identities_ER}

In this subsection we derive Ward's equations (Theorem~\ref{Thm_Ward}) in terms of a stress tensor, Lie derivative operator and the modular parameter. Later, we combine these equations with the level two degeneracy equations for the one-leg operator to derive BPZ equations.

We first prove Theorem~\ref{Thm_Ward} in the case trivial background charge $\bfs\beta = \bfs0$. 

\begin{proof}[Proof of Theorem~\ref{Thm_Ward} in the case $\bfs\beta = \bfs0$]
	Let us choose 
	$v(z) = v_\zeta(z)=H(r,\zeta-z)$ with $\zeta \in \mathcal C_r$. 
	By Proposition~\ref{Arep_ER}, Eguchi-Ooguri equations (Corollary~\ref{EO lem}), and Ward's identities (see Proposition~\ref{prop: Ward's identities}), we obtain
	\begin{align*}
		2\E A_{\bfs\beta}(\zeta) \mathcal{X} 
		&= \E W^{+}(v_\zeta) \mathcal{X} +\E W^{-}(v_{\bar{\zeta}}) \mathcal{X}+\pa_r \E \mathcal{X}
		= \Big(\LL_{v_\zeta}^+ +\LL_{v_{\bar{\zeta}}}^- \Big)\E \mathcal{X}+\pa_r \E \mathcal{X} ,
	\end{align*}
	which completes the proof.	
\end{proof}

For the general background charge $\bfs \beta$, it is required to generalize the residue form of Ward's identity at the nodes of $\bfs \beta$, see Lemma~\ref{Lem_Ward beta node}. 
Since this lemma can be more easily obtained by employing the insertion procedure, we defer its proof to Subsection~\ref{one-leg subsec}.
Then the proof of Theorem~\ref{Thm_Ward} for the non-trivial background charge can be obtained along the same lines for the trivial background charge with slight modifications. 

As a corollary of Theorem~\ref{Thm_Ward}, we derive the following form of Ward's equations. 
\begin{cor} \label{Wequation_ER}
	For a holomorphic differential $Y\in \FF_{\bfs\beta}$ with conformal dimension $h$, and $X=X_1\cdots X_n$, $( X_j \in \FF_{\bfs\beta}$), we have
	$$ 
	2\E A_{\bfs\beta}\ast Y(z) X =\E Y(z) \LL_{v_z}^+ X+\LL_{v_{\bar{z}}}^-\E Y(z)X + \Big(2h \frac{\zeta_r(\pi)}{\pi}+\pa_r \Big)\E Y(z)X,
	$$ 
	where all fields are evaluated in the identity chart of $\mathcal C_r$. 
\end{cor}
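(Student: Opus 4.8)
The plan is to deduce the corollary from Theorem~\ref{Ward eq_main} by specializing the string to $\XX = Y(z)X$ and then extracting the constant term of the Laurent expansion of both sides as $\zeta \to z$. By the very definition of the OPE product $\ast = \ast_0$, the coefficient of $(\zeta-z)^0$ in $A(\zeta)Y(z)$ is $A\ast Y(z)$; hence the $(\zeta-z)^0$ term of the left-hand side $2\E[A(\zeta)Y(z)X]$ is exactly the desired quantity $2\E[A\ast Y(z)X]$. Everything then reduces to computing the constant term of the right-hand side $(\LL_{v_\zeta}^+ + \LL_{v_{\bar\zeta}}^-)\E[Y(z)X] + \pa_r\E[Y(z)X]$ as $\zeta\to z$.

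First I would use $\E[\LL_v\,\cdot] = \LL_v\E[\cdot]$ together with Leibniz's rule to split each Lie derivative into its action on the node $z$ of $Y$ and its action on the nodes of $X$. Since $Y$ is a holomorphic $(h,0)$-differential, at the $Y$-node one has $\LL_{v_\zeta}^+ Y = (v_\zeta\pa + h v_\zeta')Y$ while $\LL_{v_{\bar\zeta}}^- Y = \bar v_{\bar\zeta}\bp Y = 0$ within correlations; thus the only singular contribution as $\zeta\to z$ comes from $v_\zeta(z)\,\E[\pa Y(z)X] + h\,v_\zeta'(z)\,\E[Y(z)X]$. Inserting $v_\zeta(z)=H(r,\zeta-z)$, $v_\zeta'(z)=-H'(r,\zeta-z)$ and the expansion \eqref{h-zero} of $H$ near the origin, this produces the singular terms $2h(\zeta-z)^{-2}\E[Y(z)X] + 2(\zeta-z)^{-1}\E[\pa Y(z)X]$ — which match exactly the singular part of $2\E[A(\zeta)Y(z)X]$ dictated by the differential OPE \eqref{eq: Ward4Diff}, a welcome consistency check — plus a finite term $\tfrac{2h\zeta_r(\pi)}{\pi}\E[Y(z)X]$ originating from the subleading $-\tfrac{2\zeta_r(\pi)}{\pi}w$ correction in \eqref{h-zero}.

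The remaining pieces are all regular at $\zeta = z$. The $X$-node part $\E[Y(z)\LL_{v_\zeta}^+X]$ and the anti-linear term $\LL_{v_{\bar\zeta}}^-\E[Y(z)X]$ involve $v_\zeta$ and $v_{\bar\zeta}$ evaluated at the nodes of $X$ and near $\bar z$, which stay bounded away from the respective pole lattices as $\zeta\to z$; their constant terms are therefore obtained by simply setting $\zeta = z$, giving $\E[Y(z)\LL_{v_z}^+X]$ and $\LL_{v_{\bar z}}^-\E[Y(z)X]$ (the $Y$-node contribution to the anti-linear part being absent by holomorphicity of $Y$). The modular term $\pa_r\E[Y(z)X]$ is independent of $\zeta$ and passes through unchanged. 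Collecting the $(\zeta-z)^0$ coefficients of the two sides then yields precisely $2\E[A\ast Y(z)X] = \E[Y(z)\LL_{v_z}^+X] + \LL_{v_{\bar z}}^-\E[Y(z)X] + (2h\tfrac{\zeta_r(\pi)}{\pi}+\pa_r)\E[Y(z)X]$.

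I expect the only delicate point to be the correct bookkeeping of the subleading behavior of the Loewner kernel $H$ at its pole: it is exactly the $-\tfrac{2\zeta_r(\pi)}{\pi}w$ term in \eqref{h-zero} that feeds the finite contribution $2h\zeta_r(\pi)/\pi$, the feature distinguishing this annulus form of Ward's equation from its simply connected counterpart. Verifying that this is the sole finite contribution from the $Y$-node, and that $\LL_{v_{\bar\zeta}}^-$ produces nothing singular there, is what makes the constant-term extraction rigorous.
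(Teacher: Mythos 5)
Your proposal is correct and follows essentially the same route as the paper: specialize Theorem~\ref{Ward eq_main} to $\XX=Y(z)X$, identify the singular part via Ward's OPE \eqref{eq: Ward4Diff}, and extract the finite term using the expansion \eqref{h-zero} of $H$, with the $-\tfrac{2\zeta_r(\pi)}{\pi}w$ correction supplying the $2h\zeta_r(\pi)/\pi$ contribution exactly as in the paper. The only difference is presentational — you phrase the last step as reading off the $(\zeta-z)^0$ Laurent coefficient, while the paper subtracts the singular part and takes the limit $\zeta\to z$ — which is the same computation.
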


\begin{proof}
	By Ward's OPE, we have the following singular part of the operator product expansion: 
	$$
	\Sing_{\zeta \to z}A_{\bfs\beta}(\zeta)Y(z)X=\frac{h}{(\zeta-z)^2}Y(z)X+\frac{\pa_z}{\zeta-z}Y(z)X.
	$$ 
	On the other hand, by Theorem~\ref{Thm_Ward}, we have 
	\begin{align*}
		2\E A_{\bfs\beta}(\zeta)Y(z) X 
		= (hv'_\zeta (z)+v_\zeta(z) \pa_z+\pa_r )\E Y(z)X 
		+\E Y(z) \LL_{v_\zeta}^+ X +\LL_{v_{\bar{\zeta}}}^-\E Y(z)X.
	\end{align*}
	Subtracting the singular part from the both sides of the above and then taking the limit as $\zeta \to z$, we obtain
	\begin{align*}
		2\E A_{\bfs\beta}\ast Y(z) X&= \E Y(z) \LL_{v_z}^+ X+\LL_{v_{\bar{z}}}^-\E Y(z)X +\pa_r\E Y(z)X 
		\\
		&+ \lim_{\zeta \to z} \Big\{ hv_{\zeta}'(z)-\frac{2h}{(\zeta-z)^2}+ \Big(v_{\zeta}(z)-\frac{2}{\zeta-z}\Big) \pa_z \Big\} \E Y(z)X.
	\end{align*}
	Corollary now follows from the asymptotic behavior \eqref{h-zero} of the Loewner vector field $v_\zeta(z)=H(r,\zeta-z)$. 
\end{proof}

\section{Annulus SLE martingale-observables} \label{SLE MOs ER}
In this section we present the precise relation between CFT and SLE in a doubly connected domain. 
In the first subsection we define the one-leg operator $\Psi$ and explain its implementation as a boundary condition changing operator. 
Subsection~\ref{Subsec_NVE} is devoted to the study of general null-vector equations. 
In Subsection~\ref{Main Theorem_sub}, we show a version of BPZ-Cardy type equation and prove Theorem~\ref{main}. 

\subsection{One-leg operator and BPZ equations} \label{one-leg subsec}

We introduce the \emph{one-leg operator} $\Psi_{\bfs{\beta}}$, a specific form of the modified multi-vertex field set to satisfy level two degeneracy equation. Throughout this subsection we denote by $\bfs{\beta}= \sum_k \beta_k \cdot q_k$ a given background charge. 

By definition, for $n \in \mathbb Z$, Virasoro and current generators $L_n$, $J_n$ are given as
$$
L_n(z):=\frac{1}{2\pi i}\oint_{(z)}(\zeta -z )^{n+1} T_{ \bfs{\beta} }(\zeta) \, d\zeta, \qquad
J_n(z):=\frac{1}{2\pi i}\oint_{(z)} (\zeta -z )^n J_{ \bfs{\beta} }(\zeta) \, d\zeta,
$$
respectively.
Let us recall the following proposition.
\begin{prop}[Cf. Proposition 7.5 \cite{MR3052311}] \label{V.P.F.} 
	Let $X$ be a Fock space field. Then any two of the following assertions imply the third one:
	\begin{itemize}
		\item	$X$ belongs to $\FF(A,\bar{A})$;
		\smallskip 
		\item $X$ is a $[\lambda,\lambda_*]$-differential;
		\smallskip 
		\item $L_n X=0$ for all $n\ge 1$, $L_0 X=\lambda X, L_{-1}X=\pa X$, and similar equations hold for $\bar{X}$. 
	\end{itemize}
\end{prop}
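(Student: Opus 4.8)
The plan is to exploit the one-to-one dictionary between the Virasoro modes acting on $X$ and the coefficients of the operator product expansion of the stress tensor with $X$, and then to play this dictionary against the definition of a differential. First I would record the mode--OPE correspondence. Writing the (formal) Laurent expansion $T(\zeta)X(z)=\sum_{k}C_k(z)(\zeta-z)^k$, the definition of the generators gives $L_nX=C_{-n-2}$ for every $n$, so the data $\{L_nX\}_{n\ge -1}$ is precisely the singular part of the product $T(\zeta)X(z)$. By Proposition~\ref{Virasoro2_ER}, in the identity chart of $\CC_r$ the difference $T-A$ is the constant field $\zeta_r(\pi)/(2\pi)$ (resp. $\zeta_r(\pi)/(2\pi)-1/(4r)$ for Dirichlet b.c.); since a constant field contributes only to the regular coefficient $C_0$, the fields $A$ and $T$ share the same coefficients $C_k$ for all $k\le -1$. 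Consequently $L_nX$ for $n\ge -1$ may be read off directly from the singular part of $A(\zeta)X(z)$, and the conjugate statement holds for the anti-holomorphic modes $\bar L_n$ and $\bar A$.

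With this dictionary in hand, two of the three implications are immediate. If $X$ is a $(\lambda,\lambda_*)$-differential lying in $\FF(A,\bar A)$, then by \eqref{eq: Ward4Diff} the Ward OPE $A(\zeta)X(z)\sim \lambda X(z)/(\zeta-z)^2+\pa X(z)/(\zeta-z)$ holds; reading off $C_{-3}=C_{-4}=\cdots=0$, $C_{-2}=\lambda X$, $C_{-1}=\pa X$ yields $L_nX=0$ for $n\ge 1$, $L_0X=\lambda X$, $L_{-1}X=\pa X$, with the conjugate relations following verbatim. Conversely, if $X$ is a $(\lambda,\lambda_*)$-differential satisfying the highest-weight relations, the dictionary forces the singular part of $A(\zeta)X(z)$ to be exactly the Ward OPE above, whence \eqref{eq: Ward4Diff} (applicable since $X$ is assumed to be a differential) gives $X\in\FF(A,\bar A)$.

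The remaining implication, that membership in $\FF(A,\bar A)$ together with the highest-weight relations forces $X$ to be a $(\lambda,\lambda_*)$-differential, is the heart of the matter. From the highest-weight relations and the dictionary, the singular OPE $A(\zeta)X(z)\sim \lambda X(z)/(\zeta-z)^2+\pa X(z)/(\zeta-z)$ holds again. I would then invoke the residue form of Ward's identity, available because $X\in\FF(A,\bar A)$: for any vector field $v$ holomorphic near $z$,
\[
\LL_v^+ X(z)=\frac{1}{2\pi i}\oint_{(z)} v(\zeta)\,A(\zeta)X(z)\,d\zeta .
\]
Expanding $v(\zeta)=v(z)+v'(z)(\zeta-z)+O((\zeta-z)^2)$ and extracting the residue against the double and simple poles gives $\LL_v^+ X=\big(\lambda v'+v\,\pa\big)X$, and the anti-holomorphic identity similarly yields $\LL_v^- X=\big(\lambda_*\overline{v'}+\bar v\,\bp\big)X$. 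Summing, $\LL_v X=\big(v\pa+\bar v\bp+\lambda v'+\lambda_*\overline{v'}\big)X$ for every $v$, which is exactly the Lie-derivative law characterizing a $(\lambda,\lambda_*)$-differential.

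The main obstacle is the final passage from this infinitesimal statement to the finite transformation law $X=(h')^{\lambda}(\overline{h'})^{\lambda_*}\,\ti X\circ h$. Here one argues that a conformal field is determined in any chart from a reference chart by integrating flows of vector fields, and that the Lie derivative computed above is precisely the infinitesimal generator of the differential transformation rule; matching generators for all $v$ therefore matches the full transformation law, so $X$ is a $(\lambda,\lambda_*)$-differential. A secondary point requiring care, and the reason Proposition~\ref{Virasoro2_ER} is essential, is the verification that the Schwarzian discrepancy $T-A$ perturbs only the mode $L_{-2}$ and leaves $L_nX$ for $n\ge -1$ untouched, which legitimizes reading the highest-weight data off $A$ rather than $T$. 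As in the simply connected case of \cite[Proposition 7.5]{MR3052311}, the argument for $\bar X$ is identical after replacing $A,\pa,v$ by $\bar A,\bp,\bar v$.
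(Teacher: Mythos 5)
The paper does not prove this proposition; it is quoted verbatim (``Cf.\ Proposition 7.5 \cite{MR3052311}'') from Kang--Makarov, so the only meaningful comparison is with the argument there. Your reconstruction is correct and is essentially that argument: the mode--coefficient dictionary $L_nX=C_{-n-2}$, the observation that the non-random form $T-A$ contributes only to the regular part of the operator product and hence leaves $L_nX$ for $n\ge -1$ unchanged, and the equivalence between the singular Ward OPE, the residue form of Ward's identity, and the Lie-derivative law $\LL_vX=(v\pa+\bar v\bp+\lambda v'+\lambda_*\overline{v'})X$ are exactly the ingredients used. The one step you rightly flag as delicate --- passing from the infinitesimal Lie-derivative law for all local holomorphic $v$ to the finite transformation rule $X=(h')^{\lambda}(\overline{h'})^{\lambda_*}\,\ti X\circ h$ --- is also where the cited reference does the real work (via its characterization of differentials by their Lie derivatives), and your sketch of integrating flows is the accepted justification; no gap beyond that.
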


Any field satisfying all conditions in Proposition~\ref{V.P.F.} is called a \emph{(Virasoro) primary} field in $\FF(A,\bar{A})$. 
Furthermore, a Virasoro primary field $X$ is called \emph{current primary} if	
$ J_n X=J_n\bar{X} =0$ for all $n\ge 1$ and
$J_0 X=-iq X$, $J_0 \bar{X}=i\overline{q}_* \bar{X}$ for some numbers $q$ and $q_*$, which are called charges. One of the main ingredients to connect CFT with SLE theory is the following \emph{level two degeneracy equation}. 

\begin{prop}[Cf. Proposition 11.2 \cite{MR3052311}] \label{level2}
	For a current primary field $V$ with charges $q$, $q_*$ in $\FF_{ \bfs{\beta} }$, we have	
	\begin{align*}
		L_{-2}V = \frac{1}{2q^2}L^2_{-1}V
	\end{align*}
	provided $2q(b+q)=1$. 
\end{prop}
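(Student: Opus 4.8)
Here the plan is to reduce the asserted operator identity to a computation in the current algebra of $J\equiv J_{(b)}$ and to extract the condition $2q(b+q)=1$ by matching the coefficients of the two independent level-two descendants $J_{-1}^2V$ and $J_{-2}V$. I would work throughout in the identity chart of $\CC_r$, which is legitimate because $L_{-2}V$ and $L_{-1}^2V$ are prescribed field identities and it suffices to verify them in one chart. In that chart, by Proposition~\ref{Virasoro2_ER} one has $J_{(b)}=J_{(0)}=:J$ and
$$T=-\tfrac12\,J\ast J+ib\,\pa J+\tfrac{\zeta_r(\pi)}{2\pi}.$$

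Next I would set up the local current modes $J_nV(z):=\frac{1}{2\pi i}\oint_{(z)}(\zeta-z)^nJ(\zeta)V(z)\,d\zeta$. Since $V$ is current primary with charges $q,q_*$, these satisfy $J_nV=0$ for $n\ge1$ and $J_0V=-iqV$, while $J_{-1}V=J\ast V$ and $J_{-2}V$ are genuine descendants. The singular OPE $J(\zeta)J(z)\sim-(\zeta-z)^{-2}$ from \eqref{OPE CC_ER} yields the Heisenberg relations $[J_m,J_n]=-m\,\delta_{m+n,0}$, and the fact that $J_{(b)}$ is a PS-form of order $ib$ gives, through its Ward OPE with $T$, the mixed relations $[L_m,J_n]=-n\,J_{m+n}+ib\,m(m+1)\delta_{m+n,0}$. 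Expressing the Virasoro modes through the current modes by $L_n=-\tfrac12\sum_k{:}J_kJ_{n-k}{:}-ib(n+1)J_n$ and evaluating on $V$, only those terms whose rightmost mode has nonpositive index survive, so that
$$L_{-1}V=iq\,J_{-1}V\ (=\pa V),\qquad L_{-1}^2V=iq\,J_{-2}V-q^2\,J_{-1}^2V,$$
$$L_{-2}V=i(q+b)\,J_{-2}V-\tfrac12\,J_{-1}^2V.$$

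Comparing these expressions gives
$$L_{-2}V-\tfrac{1}{2q^2}L_{-1}^2V=i\Big(q+b-\tfrac{1}{2q}\Big)J_{-2}V,$$
since the $J_{-1}^2V$-terms cancel automatically. Hence the right-hand side vanishes exactly when $q+b=\tfrac{1}{2q}$, i.e. $2q(b+q)=1$, which is the asserted relation; the companion statement for $\overline V$ follows verbatim with $q_*$ replacing $q$. I expect the two mode computations above to be routine but lengthy Wick/OPE bookkeeping, the substance being the emergence of the coefficient $i(q+b)$ (rather than $iq$) in $L_{-2}V$.

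The main obstacle is the careful handling of the background-charge anomaly together with the modular scalar $\zeta_r(\pi)/2\pi$ that distinguishes the annulus Virasoro field from its half-plane analogue. The anomaly is precisely what upgrades the coefficient of $J_{-2}V$ from $iq$ in $\tfrac{1}{2q^2}L_{-1}^2V$ to $i(q+b)$ in $L_{-2}V$, so the $ib$ contributions must be tracked through every contraction. Moreover $L_{-1}$ is insensitive to the scalar $\zeta_r(\pi)/2\pi$, whereas this scalar feeds into $L_{-2}V$ only as an $L_0$-type multiple of $V$; the computation therefore has to be organized through the reduced stress tensor $A_{(b)}$ so that this chart-dependent modular term is separated off and reconciled with the central-charge bookkeeping (ultimately it combines with the $\zeta_r(\pi)$-terms of Corollary~\ref{Wequation_ER} in the BPZ--Cardy derivation). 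Apart from this point, the argument is the doubly connected analogue of Proposition~11.2 in \cite{MR3052311}.
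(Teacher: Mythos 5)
Your computation is correct, but note that the paper itself supplies no proof of this proposition --- it is imported verbatim by the citation to Proposition~11.2 of \cite{MR3052311} --- and your mode-algebra argument (deriving $L_{-2}V=i(q+b)J_{-2}V-\tfrac12 J_{-1}^2V$ and $L_{-1}^2V=iqJ_{-2}V-q^2J_{-1}^2V$ from the Heisenberg relations and the anomaly $[L_m,J_n]=-nJ_{m+n}+ib\,m(m+1)\delta_{m+n,0}$, then matching coefficients) is essentially the argument of that reference. Your closing concern about the modular constant $\zeta_r(\pi)/2\pi$ is moot: since $L_n$ is defined through $T=-\tfrac12 J\ast J+ib\,\pa J$ and the contour decomposition of $(J\ast J)(\zeta)V(z)$ into modes is exact, that constant is already absorbed into the field $J_{-1}^2V=J\ast(J\ast V)$, whose coefficients cancel identically in $L_{-2}V-\tfrac{1}{2q^2}L_{-1}^2V$, so no separate bookkeeping through $A_{(b)}$ is required.
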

Suppose that the real parameters $a$ and $b$ are related to the SLE parameter $\kappa$ as 
$$
a=\sqrt{2/\kappa}, \qquad b=a(\kappa/4-1).
$$
In the sequel, we sometimes use the following Kac labeling of the conformal weights or dimensions:
$$
h_{r,s}=\frac{(r\kappa -4s)^2-(\kappa -4 )^2}{16\kappa}. 
$$

For a divisor $\bfs{\tau}= \sum_{j=1}^N \tau_j \cdot z_j$ satisfying the neutrality condition
\begin{equation} \label{NC ins}
	a+\int \bfs{\tau} =0,
\end{equation}
we define the one-leg operator
\begin{equation}\label{Psi}
	\Psi (z) \big(\equiv \Psi_{\bfs{\beta}}(z, \bfs{z})\equiv \Psi_{\bfs{\beta}}[ a \cdot z+ \bfs{\tau} ] \, \big) := \OO_{\bfs\beta} [ a\cdot z +\tfrac12 \bfs{\tau} \, , \, \tfrac12 \bfs{\tau} ].
\end{equation}
Notice that $\Psi$ is a scalar field with respect to the each node $q_j$ of the background charge $\bfs\beta$. On the other hand, by \eqref{Cou dimension}, the conformal dimensions of $\Psi$ with respect to $z$ and $z_j$ are given by 	
\begin{equation} \label{oneleg dim}
	h_{z}=\tfrac{6-\kappa}{2\kappa}=h_{1,2},
	\qquad 
	h_{j}=h_{*j}=\tfrac18 \tau_j^2-\tfrac12 \tau_j b.
\end{equation}
It follows from \eqref{vertex beta} and \eqref{Psi} that 
\begin{equation} \label{Partition}
\E \Psi(z)=\frac{ C_{(b)} [a \cdot z+\tfrac12 (\bfs{\tau}+ \bfs{\beta})\,,\, \tfrac12 (\bfs{\tau}+ \bfs{\beta}) ] }{ C_{(b)}{[\tfrac12 \bfs{\beta}\,,\, \tfrac12 \bfs{\beta}]} } .
\end{equation} 
From now on, we call 
$ Z_{\bfs{\beta}} :=\E \Psi_{ \bfs \beta }$ the \emph{partition function} following the terminology in CFT literature, see e.g., \cite[Section 11.3]{MR1424041}.
Set 
$\Lambda_{\bfs{\beta}} \equiv \Lambda_{\bfs{\beta} }(z,\bfs{z} ):= \kappa \, \pa_{z} \log Z_{\bfs{\beta}}.
$
By \eqref{Partition} and \eqref{Coulomb ftn}, the field $\Lambda_{\bfs{\beta}}$ is evaluated in the identity chart of $\CC_r$ as
\begin{equation} \label{Lambda bulk}
	\Lambda_{\bfs{\beta}}= \sqrt{\dfrac{\kappa}{2}} 
	\Big[ \sum \dfrac{\beta_j}{2} \Big( H_\chi(r, z-z_j)+H_\chi(r,z-\bar{z}_j) \Big)
+ \sum \dfrac{\tau_j}{2} \Big( H_\chi(r, z-q_j)+H_\chi(r,z-\bar{q}_j) \Big) \Big].
\end{equation}
Now we derive the level two degeneracy equation for $\Psi$. 

\begin{lem} \label{psi current}
	The one-leg operator $\Psi$ is a current primary field with charges $q = a, q_*=0$ at $z$, i.e.,	$$
	J_0 \Psi=-ia\Psi, \;\;\; J_0\overline{\Psi}=0,\;\;\; J_n\Psi=J_n \overline{\Psi}=0 \qquad (n\ge1). 
	$$
\end{lem}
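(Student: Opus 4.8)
The plan is to identify the current generators $J_n\Psi$ and $J_n\overline{\Psi}$ with the coefficients of $(\zeta-z)^{-n-1}$ in the operator product expansions of $J(\zeta)\Psi(z)$ and $J(\zeta)\overline{\Psi}(z)$, and then to show that the only singularity present is a simple pole whose residue carries the charge $a$. Since $\Psi_{\bfs{\beta}}$ is a differential lying in $\FF(A_{(b)},\bar A_{(b)})$ by Proposition~\ref{WOPE:V_ER}, it suffices to work in the identity chart of $\CC_r$, where $J=J_{(b)}=J_{(0)}=\pa\Phi_{(0)}$.

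First I would expand $J(\zeta)\Psi(z)$ by Wick's calculus. Writing $\Psi=C_{(b)}[\bfs{\sigma},\bfs{\sigma_*}]\,e^{\odot i\Phi_{(0)}[\bfs{\sigma},\bfs{\sigma_*}]}$ with $\sigma(z)=a$ and $\sigma_*(z)=0$ at the marked point $z$, a single contraction of $J(\zeta)$ against the exponent produces
$$
J(\zeta)\,\Psi(z)=\E\big[J(\zeta)\,i\Phi_{(0)}[\bfs{\sigma},\bfs{\sigma_*}]\big]\,\Psi(z)+R(\zeta,z),
$$
where $R$ is the Wick-ordered remainder, regular as $\zeta\to z$. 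The contraction function is a sum over the charges, and the only term singular at $z$ is the one located at $z$, namely $ia\,\E[J(\zeta)\Phiplus_{(0)}(z)]$. By \eqref{formal ++ cor} and the fact that $\Theta$ (resp.\ $\tilde\Theta$) has a simple zero at the origin, one has $\E[J(\zeta)\Phiplus_{(0)}(z)]\sim -1/(\zeta-z)$, so this term contributes $-ia/(\zeta-z)$. Hence $J(\zeta)\Psi(z)\sim -ia\,\Psi(z)/(\zeta-z)$ is a simple pole, and reading off coefficients gives $J_0\Psi=-ia\Psi$ together with $J_n\Psi=0$ for all $n\ge 1$.

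For the conjugate I would use $\overline{\Phiplus_{(0)}}=\Phiminus_{(0)}$ to rewrite the Wick exponent of $\overline{\Psi}$ as $i\Phi_{(0)}[\bfs{\sigma_*},\bfs{\sigma}]$, so that the charge seen by the holomorphic current $J$ at $z$, namely the coefficient of $\Phiplus_{(0)}(z)$, equals $i\sigma_*(z)=0$. The only remaining contribution at $z$ comes from $\Phiminus_{(0)}(z)$ through $\E[J(\zeta)\Phiminus_{(0)}(z)]$, governed by \eqref{formal +- cor}; this contraction involves $\zeta-\bar z$ and is therefore regular as $\zeta\to z$. Consequently $J(\zeta)\overline{\Psi}(z)$ has no singular part, which yields $J_0\overline{\Psi}=0$ and $J_n\overline{\Psi}=0$ for $n\ge 1$. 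The charges are thus $q=a$ and $q_*=0$.

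The step that demands the most care is the localization of the singularity at $z$ in the presence of the insertions defining $\Psi$: the force-point charges sit at $z_j\ne z$ and, for the ER boundary condition, their images at $\bar z_j$, so they produce no pole at $\zeta=z$; but the ER Coulomb gas factor $C_{(b)}$ must be renormalized at the boundary, where $\Theta(z_j-\bar z_j)$ degenerates, and one must check that this renormalization, being independent of $\zeta$, leaves the OPE in $\zeta$ near $z$ unaffected. The crux is verifying that the reflected contraction $\E[J(\zeta)\Phiminus_{(0)}(z)]$ genuinely stays regular at $\zeta=z$, equivalently that all the charge at $z$ is holomorphic ($\sigma_*(z)=0$); this is precisely what forces $q_*=0$ and hence $J_0\overline{\Psi}=0$.
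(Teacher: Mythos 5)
Your proof is correct and follows essentially the same route as the paper's: both reduce the statement to the singular OPE of $J_{(0)}(\zeta)$ against $\Psi$ and $\overline{\Psi}$ in the identity chart, expand by Wick's formula so that the only term singular at $z$ is the contraction $ia\,\E[J_{(0)}(\zeta)\Phiplus_{(0)}(z)]\sim -ia/(\zeta-z)$ (via \eqref{formal ++ cor} and \eqref{h-zero}), and observe that the reflected contraction governing $\overline{\Psi}$ is regular because the antiholomorphic charge at $z$ vanishes. The paper's proof is simply a terser version of the same computation.
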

\begin{proof} 
	To show the above equations, it is enough to show that as $\zeta \to z$,
	$$
	J_{(0)}(\zeta)\Psi(z) \sim -ia\frac{\Psi(z)}{\zeta-z}, \qquad  J_{(0)}(\zeta)\overline{\Psi}(z)\sim 0
	$$
	hold in the identity chart. By Wick's formula, we have
	\begin{align*}
		J_{(0)}(\zeta) \Psi(z) &= J_{(0)}(\zeta) \odot \Psi(z) + ia\E [J_{(0)}(\zeta)\Phiplus_{(0)}(z) ]\Psi(z)
	\\
	&	- \sum \frac{\tau_j}{2}\E [J_{(0)}(\zeta) \ti{\Phi}_{(0)}(z_j) ]\Psi(z)-\sum \frac{\beta_j}{2}\E [J_{(0)}(\zeta) \ti{\Phi}_{(0)}(q_j) ]\Psi(z),
	\end{align*} 
	where $\ti{\Phi}_{(0)}$ is the dual boson in \eqref{dual boson}. It is also easy to see that a similar expression holds for $\overline{\Psi}$.
	Now lemma follows from \eqref{formal ++ cor} and \eqref{h-zero}.
\end{proof}

By Proposition \ref{level2}, we immediately obtain the following level two degeneracy equation of $\Psi$. 

\begin{prop} \label{level2_ER}
	If $2a(a+b)=1$, then
	$$
	T_{\bfs\beta}\ast_{z} \Psi=\frac{1}{2a^2}\pa^2_{z}\Psi(z).
	$$
\end{prop}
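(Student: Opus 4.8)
The plan is to obtain the identity as an immediate consequence of the general level two degeneracy relation in Proposition~\ref{level2}, once $\Psi$ has been recognized as a current primary field. First I would record that Lemma~\ref{psi current} identifies $\Psi \equiv \Psi_{\bfs{\beta}}(z,\bfs{z})$ as a current primary field with charges $q=a$ and $q_*=0$ at $z$, and that $\Psi$ is moreover a Virasoro primary differential (of dimension $h_{1,2}$ in the $z$-variable). The numerical hypothesis $2a(a+b)=1$ is precisely the condition $2q(b+q)=1$ of Proposition~\ref{level2} specialized to $q=a$, so that proposition applies verbatim and yields
$$
L_{-2}\Psi = \frac{1}{2a^2}\, L_{-1}^2 \Psi.
$$
It then remains only to translate both sides from Virasoro modes into the OPE product and the derivative appearing in the statement.

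For the left-hand side I would unwind the definition $L_n\Psi = \frac{1}{2\pi i}\oint_{(z)}(\zeta-z)^{n+1}T(\zeta)\Psi(z)\,d\zeta$. Expanding the OPE as $T(\zeta)\Psi(z)=\sum_m (T\ast_m \Psi)(z)\,(\zeta-z)^m$ and extracting residues, one reads off the general relation $L_n\Psi = T\ast_{-n-2}\Psi$; in particular $L_{-2}\Psi = T\ast_0\Psi = T\ast_z\Psi$, which is exactly the quantity on the left of the claimed identity, while $L_{-1}\Psi = T\ast_{-1}\Psi$.

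The only genuinely delicate point is the identification $L_{-1}^2\Psi = \pa_z^2\Psi$. Since $\Psi$ is Virasoro primary, Proposition~\ref{V.P.F.} gives $L_{-1}\Psi=\pa_z\Psi$; but $\pa_z\Psi$ is no longer primary, so this relation cannot simply be reapplied to the second factor. I would instead establish that $L_{-1}Y=\pa Y$ holds for \emph{every} field $Y\in\FF_{(b)}$. By the mode computation above, $L_{-1}Y = T\ast_{-1}Y$; and in the identity chart of $\CC_r$ the difference $T-A$ is a non-random constant by Proposition~\ref{Virasoro2_ER}, so its OPE with $Y$ contributes only non-negative powers of $(\zeta-z)$, whence $T\ast_{-1}Y = A\ast_{-1}Y$. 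Finally, Ward's identity $\LL_v^+ Y = \frac{1}{2\pi i}\oint_{(z)}vA\,Y$ evaluated at the constant vector field $v\equiv 1$ reads $A\ast_{-1}Y = \LL_1^+ Y = \pa Y$, since all the $v'$, $v''$, $v'''$ corrections in the Lie derivative formulas vanish for a constant field. Applying this twice gives $L_{-1}^2\Psi = L_{-1}(\pa_z\Psi)=\pa_z^2\Psi$, and substituting into the displayed relation produces $T\ast_z\Psi = \frac{1}{2a^2}\pa_z^2\Psi$. The main obstacle is thus not any computation but the bookkeeping needed to verify that $L_{-1}$ coincides with $\pa$ on the non-primary field $\pa_z\Psi$, which the reduction $T\ast_{-1}=A\ast_{-1}=\pa$ settles cleanly.
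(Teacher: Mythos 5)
Your proof is correct and follows essentially the same route as the paper, which simply invokes Lemma~\ref{psi current} (that $\Psi$ is current primary with charges $q=a$, $q_*=0$) together with Proposition~\ref{level2} and treats the identifications $L_{-2}\Psi=T\ast_z\Psi$ and $L_{-1}^2\Psi=\pa_z^2\Psi$ as immediate. The extra bookkeeping you supply --- $L_n=T\ast_{-n-2}$ and the reduction $T\ast_{-1}=A\ast_{-1}=\pa$ on all of $\FF_{(b)}$ via Ward's identity with $v\equiv 1$ --- is the standard justification from \cite{MR3052311} that the paper leaves implicit.
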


We are now ready to show BPZ equation, Theorem~\ref{BPZ_ER}.
\begin{proof}[Proof of Theorem~\ref{BPZ_ER}]	
	By Proposition \ref{level2_ER}, we have
	$$
	\frac{1}{a^2} \pa^2_{z} \E \Psi(z) \XX = 2 \E ( T_{\bfs\beta}\ast_{z}\Psi ) \XX. 
	$$
	Then it follows from Corollary~\ref{Wequation_ER} that 
	\begin{align*}
		2 \E ( T_{\bfs\beta}\ast_{z}\Psi ) \XX 
		= 2 \E ( A_{\bfs\beta}\ast_{z}\Psi ) \XX + 2\E T_{\bfs\beta} (z) \, \E \Psi(z) \XX,
	\end{align*}
	which completes the proof. 
\end{proof}

We now discuss how the one-leg operator $\Psi$ acts on Fock space fields in $D_r$ as a boundary condition changing operator. Given marked boundary points $p \in \R$ and $ \{ q_j \}, \{\xi_j\}\in \pa \CC_r$, set $\bfs\beta=\sum \beta_j \cdot q_j$ and $\bfs{\tau}= \sum \tau_j \cdot \xi_j$. By abuse of notation, for $z_j \in \CC_r$, we sometimes use the same symbol $\bfs{\tau}$ for $\sum\tau_j \cdot z_j$. 
From now on, we write $w_z=w(z)$, $w_p:=w(p)$, $w_j:=w(\xi_j)$ and $\ti{w}_j:=w(q_j)$, where $w: D_r \to \CC_r$ is a conformal transformation. 
The insertion of field $\Psi/\E \Psi$ produces an operator $\XX \mapsto \wh{\XX} $ on Fock space fields. It is given by 
$$ 
\pa\XX \mapsto \pa\wh{\XX},\qquad	\bar{\pa}\XX \mapsto \bar{\pa}\wh{\XX}, \qquad \alpha\XX+\beta\YY \mapsto \alpha\wh{\XX}+\beta\wh{\YY},\qquad	\XX \odot \YY \mapsto \wh{\XX}\odot \wh{\YY}
$$ 
and by the formula:
\begin{align*}
\wh{\Phi}_{ \bfs{\beta} }(z)=\Phi_{ \bfs{\beta} }(z)+2a \arg \Theta_\chi(w_p-w_z)&+\sum \tau_j \arg \Big\{ \Theta_\chi(w_{j}-w_z)\Theta_\chi(\bar{w}_{j}-w_z) \Big \}.
\end{align*}

When $\kappa=4$, this insertion procedure produces specific height gap of GFF, which gives rise to the coupling with SLE($4,\Lambda$). For instance, in \cite{MR2651436}, Hagendorf, Bernard and Bauer studied coupling of GFF with Dirichlet boundary condition and SLE($4,\Lambda$) with one force point $q=\xi_1$ using the one-leg operator $\Psi = \mathcal{O}[a\cdot p -a\cdot q,\bfs{0}]$. 
In the case of multiple force points, the (multivalued) harmonic function $\wh{\varphi}:= \E \wh{\Phi}$ in $\bar{\CC}_r \sm \{p,\xi_1,\cdots, \xi_N\}$ has piecewise Dirichlet boundary conditions having additional jump of $2\pi a$ at $p$ and $2\pi \tau_j$ at $\xi_j$'s. Due to the neutrality condition, we require that all jumps should add up to $0$. This type of boundary condition of GFF was studied by Izyurov and Kyt\"{o}l\"{a} \cite{MR3010393}. 

As in Subsection~\ref{Main result sub}, we let 
$$
\wh{\E}[\mathcal X]=
\displaystyle \E [ e^{\odot ia \Phi_{(0)}^+(p)-\sum \frac{\tau_j}{2} \ti{\Phi}_{(0)}(\xi_j) } \mathcal X ].
$$
Then it is easy to check that $\wh{\E}[\XX]=\E[\wh{\XX}]$ for all $\XX\in \FF_{\bfs\beta}$.

The operator $\XX \mapsto \wh{\XX}$ acting on $\FF_{(b)}$ plays a role in imposing background charge $a \cdot p+\bfs\tau$. 
To be precise, by letting $\bfs \beta= a \cdot p + \bfs \tau,$ we have
\begin{equation} \label{insertion bcm gen}
 \wh{\XX} \in \FF_{ \bfs \beta }, \qquad \text{if } \XX \in \FF_{(b)}.   
\end{equation}
For a deeper discussion of the relation between the insertion procedure and the background charge modifications, we refer the reader to \cite{KM}. 

\begin{egs*}  
It is instructive to compare the following examples with those in Subsection~\ref{Subsec b.c.m.}. 
\begin{itemize}
 \item The current field $\wh{J}$ is a PS-form of order $ib$. We have 
	$$
	\wh{J}(z)=J(z)+\frac{ia w'_z}{2} H_\chi(w_p-w_z) +\frac{iw'_z}{4}\sum \tau_j \Big ( H_\chi(w_{j}-w_z)+H_\chi(\bar{w}_j-w_z) \Big ).
	$$
	\item The Virasoro field $\wh{T}$ is an S-form of order $c/12$. 
	We have 
	\begin{align*}
		\wh{T}(z)&= -\frac{1}{2} \wh{J} \ast \wh{J}+ ib \pa \wh{J}
		= T-(\wh{\jmath}-\jmath)J_{(0)}-\frac{1}{2} ( \wh{\jmath}^2-\jmath^2 )+ib \pa (\wh{\jmath}-\jmath )
		\\
		&= A_{(0)}-\wh{\jmath}J_{(0)}+ib\pa J_{(0)}+\frac{c}{12}S_w+(w'_z)^2 \E T-\frac{1}{2} ( \wh{\jmath}^2-\jmath^2 )+ib \pa(\wh{\jmath}-\jmath),
	\end{align*}
	where $\jmath:=\E J$ and $\wh{\jmath}:=\E \wh{J}$. By Proposition~\ref{Virasoro2_ER}, its evaluation in the identity chart of $\CC_r$ is given as 
	\begin{align*}
	\E \wh{T}(z)&=\frac18 \Big[a H_\chi(p-z) +\frac{1}{2}\sum \tau_j \Big ( H_\chi(\xi_j-z)+H_\chi(\bar{\xi}_j-z) \Big ) \Big]^2
	\\
	&+\frac{b}{2} \Big[ a H'_\chi(p-z) +\frac{1}{2}\sum \tau_j \Big ( H'_\chi(\xi_j-z)+H'_\chi(\bar{\xi}_j-z) \Big ) \Big]+\dfrac{\zeta_r(\pi)}{2\pi}-\frac{1}{4(r+\chi)} .
	\end{align*}
	\item The multi-vertex field $\wh{\OO}{[\bfs{\sigma},\bfs{\sigma_*}]}$ whose nodes do not intersect with $\mathcal{S}_\Psi$ is a differential with conformal dimension $(h_j,h_{*j})$. We have 
	\begin{align*}
	\E \wh{\OO}{[\bfs{\sigma},\bfs{\sigma_*}]}&=	C_{(b)}{[\bfs{\sigma},\bfs{\sigma_*}]} \E V^\odot [a\cdot p+\bfs{\tau}/2,\bfs{\tau}/2 ] V^\odot[\bfs{{\sigma}},\bfs{{\sigma_*}}] 
	\\
	&=C_{(b)}{[\bfs{\sigma},\bfs{\sigma_*}]} e^{ -\E \Phi_{(0)} [a\cdot p+\bfs{\tau}/2,\bfs{\tau}/2 ] \E \Phi_{(0)}[\bfs{{\sigma}},\bfs{{\sigma_*}}] }
	\end{align*} 
	and its evaluation in the identity chart of $\CC_r$ is given as (up to a multiplicative constant) 
 \begin{align*}
 \E \wh{\OO} &=\Theta'_\chi(0)^{\sum \frac{1}{2}( \sigma_j^2+\sigma_{*j}^2 ) } \prod \Theta_\chi(p-z_j)^{a\sigma_j} \Theta_\chi(p-\bar{z}_j)^{a \sigma_{*j} } \Theta_\chi(z_j-\bar{z}_j)^{\sigma_{j} \sigma_{*j}}
 \\
 &\times \prod_{ j,k} ( \Theta_\chi(\xi_j-z_k) \Theta_\chi(\bar{\xi}_j-z_k) )^{\tau_j \sigma_k/2 } ( \Theta_\chi(\xi_j-\bar{z}_k) \Theta_\chi(\bar{\xi}_j-\bar{z}_k) )^{ \tau_j \sigma_{*k}/2 } 
 \\
 &\times \prod_{j < k} \Theta_\chi(z_j-z_k)^{\sigma_j\sigma_k}\Theta_\chi(\bar{z}_{j}-z_k)^{\sigma_{*j}\sigma_k} \Theta_\chi(z_j-\bar{z}_k)^{\sigma_j\sigma_{*k}}\Theta_\chi(\bar{z}_j-\bar{z}_k)^{\sigma_{*j}\sigma_{*k}}.
 \end{align*}
\end{itemize}

\end{egs*}

Suppose $\bfs \beta$ is a background charge with $\textup{supp }\bfs \beta=\{ q_k \}$ ($q_k\in D$). As discussed in Subsection~\ref{Ward's identities_ER}, we need the following residue form of Ward's identity at $q_k$ to complete the proof of Theorem~\ref{Thm_Ward}. For reader's convenience, we borrow its proof from \cite{KM}. 

\begin{lem} \label{Lem_Ward beta node}
For any $\XX_{ \bfs \beta } \in \FF_{ \bfs \beta }$ with nodes outside $\textup{supp }\bfs \beta=\{ q_k \}$ and a vector field $v$ holomorphic in a neighborhood of each of $q_k$, we have
\begin{equation}
\frac{1}{2\pi i} \oint_{(q_k)} v \, \E A_{\bfs \beta } \XX_{\bfs \beta } =\E \LL_v^+(q_k) \XX_{\bfs \beta }. 
\end{equation}
\end{lem}
\begin{proof}
Since $\XX_{\bfs \beta }$ is a scalar with respect to $q_k$, we have 
$$
\E \LL_v^+(q_k) \XX_{\bfs \beta }= v(q_k) \pa_{q_k}\E \XX_{\bfs \beta }.
$$
Moreover, it follows from $\E \XX_{\bfs \beta }= \E [ e^{ \odot i \Phi[ \bfs{\beta}/2, \bfs{\beta}/2 ]  } \XX_{  \bfs 0 } ]$ that
\begin{equation}
\pa_{q_k} \E \XX_{\bfs \beta }= \frac{i \beta_k}{2} \E \XX_{ \bfs 0 } J(q_k) \odot e^{ \odot i \Phi[ \bfs{\beta}/2, \bfs{\beta}/2 ]  }. 
\end{equation}
On the other hand, by the residue calculus using \eqref{jmath beta}, we have
$$
\frac{1}{2\pi i} \oint_{(q_k)} v \, \E A_{\bfs \beta } \XX_{\bfs \beta }= \frac{i \beta_k }{2} \,v(q_k) \E J(q_k) \XX_{\bfs \beta } . 
$$
Thus all we need to show is 
\begin{equation}\label{Ward beta node eq}
\E J(q_k) \XX_{\bfs \beta }=  \E \XX_{ \bfs 0 } J(q_k) \odot e^{ \odot i \Phi[ \bfs{\beta}/2, \bfs{\beta}/2 ]  }. 
\end{equation}

By Wick's calculus and \eqref{insertion bcm}, we have that for $q_k' \not= q_k$, 
$$
J(q_k') \odot e^{ \odot i \Phi[ \bfs{\beta}/2, \bfs{\beta}/2 ]  } =  J(q_k') e^{ \odot i \Phi[ \bfs{\beta}/2, \bfs{\beta}/2 ]  } + ( \jmath(q_k') - \jmath_{ \bfs \beta }(q_k') ) e^{ \odot i \Phi[ \bfs{\beta}/2, \bfs{\beta}/2 ]  }. 
$$
Therefore by \eqref{insertion bcm gen}, we have 
\begin{align*}
\E \XX_{ \bfs 0 } J(q_k') \odot e^{ \odot i \Phi[ \bfs{\beta}/2, \bfs{\beta}/2 ]  } &= \E \XX_{ \bfs 0 }  J(q_k') e^{ \odot i \Phi[ \bfs{\beta}/2, \bfs{\beta}/2 ]  }+    ( \jmath(q_k') - \jmath_{ \bfs \beta }(q_k') ) \E \XX_{ \bfs 0 }  e^{ \odot i \Phi[ \bfs{\beta}/2, \bfs{\beta}/2 ]  } 
\\
&= \E \XX_{ \bfs \beta }  J_{ \bfs \beta }(q_k')- \jmath_{ \bfs \beta }(q_k')  \E \XX_{ \bfs \beta } = \E J(q_k') \XX_{ \bfs \beta }. 
\end{align*}
Now \eqref{Ward beta node eq} follows by taking the limit $q_k' \to q_k.$
\end{proof}

\subsection{Null-vector equations} \label{Subsec_NVE}

Throughout this subsection, until further notice, we consider the case that all marked points are on the outer boundary component. 
This allows us to simply consider the one-leg operator of the form $\Psi(z) = \OO_{\bfs\beta} [a\cdot p +\bfs{\tau}\,,\, \bfs{0}]$.
By \eqref{Partition}, the associated partition function 
$$ Z(p) \big(\equiv Z_{ \bfs{\beta} }(p,\bfs \xi) \equiv  Z_{ \bfs{\beta} }[a\cdot p+\bfs{\tau}] \big):=\E \Psi (p)
$$ 
is evaluated in the cylinder $\CC_r$ as
\begin{equation}
	Z=
		\displaystyle C_{\bfs{\tau} } \prod_j \Theta_\chi(r,p-\xi_j) ^{a \tau_j} \prod_j \Theta_\chi(r,p-q_j) ^{a \beta_j} \prod_{j < k} \Theta_\chi(r,\xi_j-\xi_k)^{\tau_j \tau_k} \prod_{j, k} \Theta_\chi(r,\xi_j-q_k)^{\tau_j \beta_k}, 
\end{equation}
where $C_{ \bfs{\tau} }=\Theta'(r,0)^{\frac{a^2}{2}+\sum \frac{\tau_j^2}{2}}$. It also follows from the neutrality condition that 
\begin{align} \label{Partition gen}
	\begin{split}
		Z&=
		\displaystyle C_{\bfs{\tau} } 	 \exp\Big( -\frac{ (ap+\int z \,d\bfs{\tau} +\int z \,d\bfs{\beta})^2-(\int z \,d\bfs{\beta})^2 }{4(r+\chi)} \Big) 
		\\
		&\times \prod_j \Theta(r,p-\xi_j) ^{a \tau_j} \prod_j \Theta(r,p-q_j) ^{a \beta_j} 
		 \prod_{j < k} \Theta(r,\xi_j-\xi_k)^{\tau_j \tau_k} \prod_{j, k} \Theta(r,\xi_j-q_k)^{\tau_j \beta_k}.
	\end{split}
\end{align}
We write $\lambda_p$, $\lambda_j$ for the (holomorphic) conformal dimension of $\Psi$ with respect to $p$ and $\xi_j$. Then by \eqref{Cou dimension}, we have
\begin{equation} \label{Partition gen dim}
	\lambda_p=\frac{6-\kappa}{2\kappa},\qquad \lambda_j=\frac12 \beta_j^2-\beta_j b.
\end{equation} 

It is worth to pointing out that by the neutrality condition, the sum of all the exponents (of theta functions) in the expression \eqref{Partition gen} vanishes, i.e.,
\begin{align*}
\frac{a^2}{2}+\sum_j \frac{\tau_j^2}{2}+\sum_j a\tau_j+\sum_k a \beta_k + \sum_{j<k} \tau_j \tau_k+\sum_{j,k} \tau_j \beta_k
=\frac12 \Big( a+\sum_j \tau_j+\sum_k \beta_k \Big)^2-\Big( \sum_k \beta_k \Big)^2=0.	
\end{align*}
Therefore it follows from asymptotic behaviors of theta function (see e.g., \cite[pp.65-69]{MR808396})
\begin{equation} \label{degen theta}
	\lim\limits_{r\to \infty} e^{r/4} \Theta(r,x) =2\sin\Big(\frac{x}{2}\Big), \qquad \lim\limits_{r\to \infty}e^{r/4}\Theta'(r,0)=1
\end{equation}
that the partition function $Z$ has a non-trivial limit as $r \to \infty$. Indeed, the partition functions degenerate to (up to a multiplicative constant)
\begin{align}
	\begin{split}
		\label{degen gen}
		Z_\infty:=\lim\limits_{r\to \infty} Z&= \prod_{j} \sin ^{a \tau_j}\Big( \frac{p-\xi_j}{2} \Big) \prod_{j} \sin ^{a \beta_j}\Big( \frac{p-q_j}{2} \Big) 
		\\
		&\times \prod_{j < k} \sin ^{\tau_j \tau_k} \Big( \frac{\xi_j-\xi_k}{2} \Big) \prod_{j,k} \sin ^{\tau_j \beta_k} \Big( \frac{\xi_j-q_k}{2} \Big).
	\end{split}
\end{align}

As a consequence of BPZ equation (Theorem~\ref{BPZ_ER}), we obtain a general form of the null-vector equation for the partition function 
$Z$. Here and subsequently, $\pa$ and $'$ stand for the differentiation with respect to $p$. 
\begin{cor} \label{pre screen null}
	The partition function $Z$ satisfies 
	\begin{align}
		\begin{split}
			\pa_r Z =\frac{\kappa}{2} \pa^2 Z&+ \sum_j \Big( \lambda_j H'(r,p-\xi_j)-H(r,p-\xi_j)\pa_{\xi_j} \Big) Z
			-\sum_{j} H(r,p-q_j)\pa_{q_j}Z +C(r,\bfs{q}) Z,
		\end{split}
	\end{align}
	where
	\begin{align} \label{null vec const}
		\begin{split}
			C(r,\bfs{q})
			&=-\dfrac{1}{16}\Big[ \sum \beta_k \Big\{ H_\chi(r,q_k-p)+H_\chi(r,\bar{q}_k-p) \Big\} \Big]^2
			\\
			&-\dfrac{b}{2} \, \sum \beta_k \Big\{ H'_\chi(r,q_k-p)+H'_\chi(r,\bar{q}_k-p) \Big\}-\frac{6}{\kappa} \frac{\zeta_r(\pi)}{\pi}+\frac{1}{2(r+\chi)}. 
		\end{split}
	\end{align}
\end{cor}
\begin{proof}
	By applying trivial string $\XX \equiv 1$ to Theorem~\ref{BPZ_ER}, we have 
	$$
	\pa_r Z=\frac{\kappa}{2} \pa^2 Z-\LL_{v_{p}}^+Z +C(r,\q) Z, \qquad 	C(r,\q)=-2h_{1,2} \frac{\zeta_r(\pi)}{\pi}-2\E T_{\bfs\beta}.
	$$ 
 Now corollary follows from Proposition~\ref{AT beta} and the fact that 
	$$\LL_{v_{p}}^+Z = \sum \Big( v_p(\xi_j)\pa_{\xi_j}+\lambda_j v'_p(\xi_j) \Big) Z+\sum v_p(q_j) \pa_{q_j} Z.$$
\end{proof}

\begin{eg*}
	Let us consider the simplest case that 
	$\bfs\beta=\bfs 0$ and $\bfs\tau=-a\cdot q$. Write
	$\Psi_*:= \mathcal{O}[a\cdot p -a\cdot q,\bfs{0}]$. 
	By \eqref{Partition gen}, we have 
	\begin{equation}\label{k=4}
	 Z_*:=\E \Psi_*= \Theta'_\chi(r,0)^{\frac{2}{\kappa}}\Theta_\chi(r,p-q) ^{-\frac{2}{\kappa}}. 
	\end{equation}
	For $\chi=0$ and $\kappa=4$, the SLE associated with the partition function \eqref{k=4} was studied in \cite{MR3010393,MR2651436}. 
	
	By translation invariance, we have $\pa_q Z_*= -Z'_*$. Also by \eqref{Partition gen dim}, the conformal dimension $\lambda_q$ of $\Psi_*$ with respect to $q$ is given as 
	$$\lambda_q=\frac12 a^2+ab= \frac{1}{2}-\frac{1}{\kappa}.$$
	Therefore it follows from Corollary~\ref{pre screen null} that 
	$$
	\pa_r Z_*= \frac{\kappa}{2} Z_*''+H Z_*'+\Big(\frac{1}{2}-\frac{1}{\kappa} \Big) H' Z_*+C(r) Z_*, \qquad C(r):=-\frac{6}{\kappa} \frac{\zeta_r(\pi)}{\pi}+\frac{1}{2(r+\chi)}.
	$$
	Notice that for $\kappa=4$, this corresponds to the null-vector equation \eqref{Lawler Zhan null}.
\end{eg*}

\subsection{BPZ-Cardy equations} \label{Main Theorem_sub} 

In this subsection we prove BPZ-Cardy type equation, a key ingredient to connect CFT with SLE theory, and complete the proof of Theorem~\ref{main}. 

Fix a marked point $p=\xi$ on the outer boundary. Recall the definitions
$$ Z_{\bfs{\beta} } (z,\bfs{z}) :=\E \Psi_{\bfs{\beta}}(z, \bfs{z}), \qquad \Lambda_{\bfs{\beta} }(z,\bfs{z} ):= \kappa \,  \pa_{z} \log Z_{\bfs{\beta}} (z,\bfs{z}).$$ 
For a Fock space functional $\mathcal X$ whose nodes do not intersect marked boundary points, set
$$
R_{z}:=\wh{\E}_{z}[\XX]:=\frac{\E[\Psi_{\bfs{\beta}}(z, \bfs{z})\XX]}{ Z_{\bfs{\beta} } (z,\bfs{z}) }, \qquad 	R_{\xi}:=\wh{\E}_{\xi}[\XX]=
		\displaystyle \E\Big[e^{\odot ia \Phiplus_{(0)}(\xi)-\sum \frac{\tau_j}{2} \ti{\Phi}_{(0)}(\xi_j)}\XX\Big]. 
$$
Let us denote 
$ \check{\LL}_{v_z} := \LL_{v_z}(\bar{D}_r \sm ( \{ z \} \cup \mathcal S_\Psi ) )$, i.e., $\check{\LL}_{v_z} \Psi Y(z)X:= \Psi Y(z)\check{\LL}_{v_z}X$ if the nodes of $\XX$ do not intersect $z$ and $\mathcal S_\Psi$.

Now we prove the following BPZ-Cardy equation.
\begin{prop}\label{BPZ-C_ER}
	For any $\XX \in \mathcal F_{\bfs{\beta}}$, in the $\mathcal C_r$-uniformization, we have		
	$$
	\frac{1}{a^2}\pa^2_{\xi} R_{\xi}+\Lambda_{ \bfs{\beta} } (\xi,\bfs{\xi})\pa_{\xi} R_{\xi}= \check{\LL}_{v_{\xi}} R_{\xi}+ \Big( \sum v_{\xi}(\xi_j) \pa_{\xi_j}+\bar{v}_{\xi}(\xi_j)\bar{\pa}_{\xi_j} \Big) R_{\xi}+\pa_rR_{\xi}.
	$$
Here $\pa_{\xi}=\pa+\bar{\pa}$ is the operator of differentiation with respect to the real variable $\xi$ whereas $\pa_{\xi_j}$ denotes the one with respect to the complex variable $\xi_j$.
\end{prop}
\begin{proof}	Let us write $\bfs{\zeta}=S_{\XX}$. Since $\Psi(z)$ is a holomorphic differential with respect to $z$, we rewrite Theorem~\ref{BPZ_ER} as
	\begin{align*}
		\frac{1}{a^2}\pa^2_{z} \Big( R_{z} Z_{\bfs{\beta} } (z,\bfs{z}) \Big) &= \Big( \LL_{v_{z}}^+ +\LL_{v_{\bar{z}}}^- \Big)(\bfs{\zeta},\bfs{z},\q) R_{z} Z_{\bfs{\beta} } (z,\bfs{z}) 
		+\Big( \pa_r+ 2h_{1,2} \frac{\zeta_r(\pi)}{\pi}+2\E T_{\bfs\beta}(z) \Big) R_{z} Z_{\bfs{\beta} } (z,\bfs{z}). 
	\end{align*} 
	By applying the trivial string $\mathcal X \equiv 1$ to the above, we have
	\begin{align*}
			\frac{1}{a^2}\pa^2_{z} Z_{\bfs{\beta} } (z,\bfs{z})&=\Big( \LL_{v_{z}}^+ +
		\LL_{ v_{ \bar{z} } }^- \Big)(\bfs{\zeta},\bfs{z},\q) Z_{\bfs{\beta} } (z,\bfs{z})
		+\Big( \pa_r+ 2h_{1,2} \frac{\zeta_r(\pi)}{\pi}+2\E T_{\bfs\beta}(z) \Big) Z_{\bfs{\beta} } (z,\bfs{z}). 
	\end{align*}
	Subtracting the above two equations, we obtain
	$$
	\frac{1}{a^2}\pa^2_{z} R_{z}+\Lambda_{ \bfs{\beta} }(z,\bfs{z})\pa_{z} R_{z}=\Big( \LL_{v_{z}}^+ +\LL_{v_{\bar{z}}}^- \Big)(\bfs{\zeta},\bfs{z},\q) R_{z}+\pa_rR_{z}.
	$$
	We now take the limit $z \to \xi$ and $z_j \to \xi_j$. Since $\pa_{\xi}=\pa+\bar{\pa}$, we obtain 
	$$
	\frac{1}{a^2}\pa^2_{\xi} R_{\xi}+\Lambda_{\bfs{\beta} } (\xi,\bfs{\xi})\pa_{\xi} R_{\xi}=\LL_{v_{\xi}}(\bfs{\zeta},\bfs{q})R_{\xi}+\LL_{v_{\xi}}(\bfs{\xi})R_{\xi}+\pa_rR_{\xi}.
	$$
	Now proposition follows from the fact $R_\xi$ is a scalar field with respect to each $\xi_j$.
\end{proof}	

Now we are ready to prove Theorem~\ref{main}. 

\begin{proof}[Proof of Theorem~\ref{main}] 
	Let us denote $R_{\xi}=\wh{\E}_{\xi}[\XX]$. Then 
	$$
	M_t=m \Big(r-t,\xi_t,\ti{g}_t(\bfs{\xi}) \Big), \qquad m(r,\xi,\bfs{\xi},t )=\Big( R(r,\xi,\bfs{\xi}) \| \ti{g}_t^{-1}\Big).
	$$ 
	Hence, by It\^{o}'s formula, we have
	\begin{align*}
		dM_t = -\pa_r m\, dt+ \pa_\xi m \, d\xi_t+ \pa^2_\xi m \frac{d\langle\xi\rangle_t}{2}
		+ ( L_t R\| \ti{g_t}^{-1})\,dt
		+ \sum_{j=1}^N \Big[ \pa_{\xi_j}m \, d\xi_j(t) + \bar{\pa}_{\xi_j}m \, d \overline{\xi_j(t)}\Big],
	\end{align*}
	where 
	$$
	L_t:= \frac{d}{ds} \Big|_{s=0} \Big( R (r-t, \xi_t, \ti{g}_t(\bfs{\xi})) \| \ti{g}_{t+s}^{-1} \Big).
	$$ 
	Let $f_{s,t}= \ti{g}_{t+s}^{-1} \circ \ti{g}_t^{-1} $. Then since the time-dependent flow $f_{s,t}$ satisfies the ODE
	$$
	\frac{d}{ds} f_{s,t} = -v_{\xi_{t+s}}f_{s,t}, \qquad v_{\xi}(z)=H(r,\xi-z),
	$$
	we obtain 
	$$
	f_{s,t}=\text{id}-s \, v_{\xi_t}+o(s).
	$$ 
	From the fact that the fields in $\mathcal{F}_{\bfs{\beta}}$ depends smoothly on local charts, it follows
	$$
	L_t= - \Big( \check{ \mathcal L}_{v_{\xi_t}} R_\xi \| \ti{g}_t^{-1} \Big).
	$$
	By the Loewner's equation $d\xi_j(t)=-v_{\xi_t}(\xi_j(t))$, it follows from Proposition~\ref{BPZ-C_ER} that the drift term of $dM_t$ vanishes. This completes the proof of theorem.
\end{proof}

\begin{eg*}
	The simplest examples of SLE martingale-observables are the 1-point function $M(z)$ and the 2-point function $N(z_1,z_2)$ of the bosonic field $\wh{\Phi}_{(b)}$ when $\bfs{\beta}=\bfs 0$. 
 For instance, when $\chi=\infty$, for all $\kappa$, 
	$$
	dM_t(z)=-\sqrt{2}\,\Im\, H (w_t(z))\,dB_t+ \Big( a+\sum \tau_j \Big) \Im \, \Big( \frac{H^2}{2}+H' \Big) (w_t(z) )\, dt.
	$$
	Due to the neutrality condition \eqref{NC ins}, $M_t$ is driftless. 
	For the $2$-point function, one can easily show that
	\begin{align*}
	dN_t(z_1,z_2)&=	-\sqrt{2} \, \Big( M_t(z_1) \, \Im\,H(w_t(z_2))+ M_t(z_2) \, \Im\,H( w_t(z_1)) \Big) \, dB_t
		\\
		& +\Big( a+\sum \tau_j \Big) \Big \{ M_t(z_1)\Big( \frac{H^2}{2}+H' \Big)(w_t(z_2))+ M_t(z_2)\Big( \frac{H^2}{2}+H' \Big)(w_t(z_1)) \Big \}\, dt.
	\end{align*}
	Thus the drifts disappear again by the neutrality condition. We remark that this is equivalent to Hadarmard's variational formula. 
 In \cite{MR3010393}, Izyurov and Kyt\"{o}l\"{a} studied these types of martingale-observables for $\kappa=4$ with Dirichlet boundary condition. 
\end{eg*}

\begin{eg*}
Let us consider the case that there is only one force point $q$ on $\R_r$. By the neutrality condition, we impose the symmetric charge $(-a/2,-a/2)$ at $q$. 

By \eqref{Lambda bulk}, the drift function $\Lambda$ is given by 
\begin{equation} \label{Lambda_pq cro}
\Lambda(p,q)=H_I(r,p-\Re\, q)+\frac{p-\Re \, q}{r+\chi}. 
\end{equation}
Thus the driving process $\xi_t$ is given by 
\begin{equation}
d\xi_t =\sqrt{\kappa}\,dB_t+\Big( H_I(r-t, \xi_t-\Re \,\tilde{g}_t(q) ) +\frac{\xi_t- \Re\,\tilde{g}_t(q) }{r+\chi-t} \Big)\,dt,\qquad \xi_{0}=p. 
\end{equation} 
Note that in the degenerate limit $r \to \infty$, the associated SLE($\kappa,\Lambda$) converges to the law of radial SLE$(\kappa)$. We remark that when $\kappa=4$, the SLE($4,\Lambda$) corresponds to the scaling limit of level lines of \emph{compactified} GFF (cf. \cite{MR3010393}), see Figure~\ref{Fig_cGFF}.
\begin{figure}[h!]
	\begin{center}
		\includegraphics[width=0.8\textwidth]{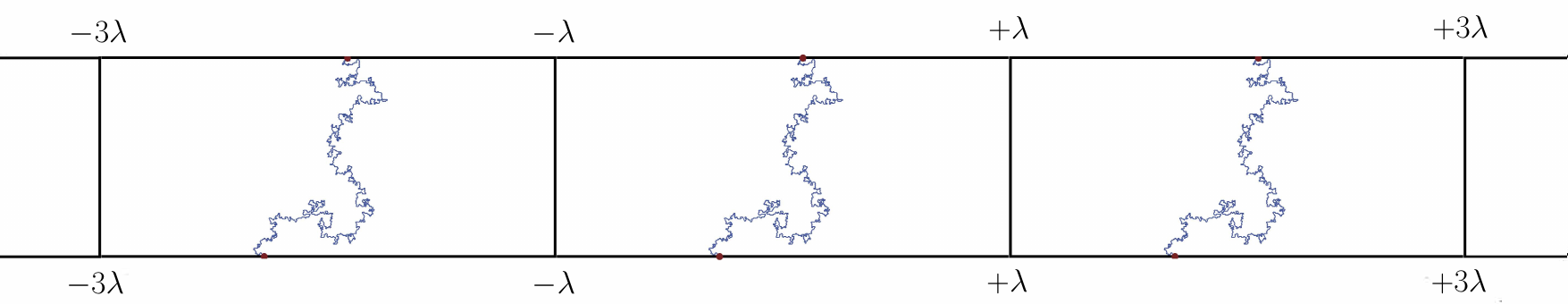}
	\end{center}
\caption{Level lines of compactified GFF}\label{Fig_cGFF}
\end{figure}

Using the bosonic observables, one can show the following version of the left-passage probability of SLE trace. For this, recall that $\ti{\Theta}:=\Theta_{0}$. 

\begin{prop}\label{Prop_LPP SLE4}
For $\chi=0$ let $\eta$ be the trace of SLE($\kappa,\Lambda$), where $\Lambda$ is given by \eqref{Lambda_pq cro}. Then $\eta$ a.s. ends at the force point $q$. 
Furthermore, when $\kappa=4$, the probability $\P(z)$ that $z \in \S_r$ is left to $\eta$ is given by 
\begin{equation} \label{LPP}
\P(z)=\frac{1}{\pi} \Big( \arg \tilde{\Theta}(r,z-p)- \arg \tilde{\Theta}(r,z-q) \Big).
\end{equation}
\end{prop}
\begin{proof}
By Loewner's equation, the angle-difference process $X_t:=\xi_t-\Re \, \tilde{g}_t(q) $ satisfies
\begin{equation}
	dX_t=\sqrt{\kappa}\,dB_t +\frac{X_t\,dt}{r+\chi-t}, \qquad X_0=p-\Re\,q. 
\end{equation} 
Therefore the process $X_t$ is realized as a Brownian bridge starting from $X_0$, which reaches $0$ at time $t=r+\chi$, see e.g., \cite{MR2001996}. Thus when $\chi=0$, we have $\lim_{t \uparrow r} X_t=0$, which means $\eta$ a.s. ends at the force point $q$.

For $\kappa=4$, by Theorem~\ref{main}, the bosonic observable
$$M_t(z):=\frac{1}{\pi} \Big( \arg \tilde{\Theta}(r-t,w_t(z))- \arg \tilde{\Theta}(r-t,w_t(z)-w_t(q)) \Big)$$ is a (bounded) martingale. On the other hand, note that $M=M_0$ is a harmonic function satisfying the boundary condition 
$$
M(z)= 
\begin{cases}
	0 &\text{if}\quad z \in (0,p)\cup (ir,q),
	\\
	1 &\text{if}\quad z \in (p,2\pi) \cup (q,2\pi+ir).
\end{cases}
$$
Let us write $E$ the event that $z$ is left to $\eta$. Then we have
$$
\lim_{t \uparrow r} M_t(z)=\begin{cases}
	0 &\text{on }E,
	\\
	1 &\text{otherwise}.
\end{cases}
$$
Then proposition follows from optional stopping theorem. 
\end{proof}

\end{eg*}

\section{Further generalizations}\label{Section_further generalizations}

In this section we discuss several methods to extend the class of SLE martingale-observables based on some intrinsic ideas from CFT. 

\subsection{Non-atomic background charges} \label{Subsec_NVE msr}

In \cite{MR3729653}, Powell and Wu studied GFF with general Dirichlet boundary conditions and its coupling with SLE in a simply connected domain, see also a recent work \cite{lupu2021level}. 
Intending to account for bosonic fields with such boundary conditions from the viewpoint of CFT, we implement general measure-valued background charges beyond the linear combination of the atomic masses. 

Let $\check{\bfs\beta}$ be a (signed) measure on $\R_r$ with total mass $-2b$ and set 
$\bfs{\beta}:=2b \cdot q+\check{\bfs\beta}$ where $q \in \R$.
We write $\bfs\beta_n$ for its discrete counterpart given by 
$$
\bfs\beta_n:=2b \cdot q + \sum_{k=1}^n \frac{\beta_k}{n} \cdot q_k, \qquad (\sum \beta_k=-2b ).
$$
Set $\bfs \tau= -a \cdot q$ and define the one-leg operator $\Psi_{ \bfs\beta }$ as the continuum limit of its discrete counterpart, i.e., 
$$
\Psi_{ \bfs\beta }[a\cdot p-a \cdot q]:=\lim_{ n \to \infty } \Psi_{ \bfs\beta_n }[a\cdot p-a \cdot q].
$$ 
Let us also write 
$$
Z_{ \bfs \beta }:= \E \Psi_{ \bfs\beta }[a\cdot p-a \cdot q], \qquad Z_{ \bfs \beta_n }:= \E \Psi_{ \bfs\beta_n }[a\cdot p-a \cdot q].
$$
Denoting $q_k=s_k+ir$, by \eqref{Z beta n} below, we have 
\begin{align}
	\begin{split} \label{Z msr beta}
		Z_{ \bfs \beta }
		&=\Theta'(0)^{\frac12} \Theta(p-q)^{1-\frac{6}{\kappa} } \exp\Big( a \int \log \frac{\Theta_I(p-s)}{\Theta_I(q-s)} \,d \check{\bfs\beta}(s) \Big) 
		\\
		&\times \exp\Big[ \Big( 1-\frac{6}{k} \Big) \frac{(p-q)^2}{4(r+\chi)}+ a \frac{p-q}{4(r+\chi)} \int (p+q-2s)\, \,d \check{\bfs\beta}(s) \Big] 
	\end{split}
\end{align}
in the identity chart of $\CC_r$. In the sequel, without loss of generality, let $p+q=0$. 

We now derive the null-vector equation for $Z_{\bfs \beta}$. 

\begin{prop}\label{Prop_NVEnonatom}
The partition function $Z_{\bfs \beta}$ satisfies
\begin{equation}
	\pa_r Z_{ \bfs \beta }=\frac{\kappa}{2} \pa_p^2 Z_{ \bfs \beta }+H(p-q) \pa_p Z_{ \bfs \beta } +h_{1,2} H'(p-q) Z_{ \bfs \beta } +F_{ \bfs\beta } Z_{ \bfs \beta },
\end{equation}
where 
\begin{equation}
	F_{ \bfs\beta }
		= -2\Big( \E T_{ \bfs\beta }(p)+h_{1,2} \frac{\zeta_r(\pi) }{\pi} \Big) 
	- \frac{a}{2} \int \Big( H(p-q)- H_I(p-s)\Big) \Big( H_I(p-s)-H_I(q-s)+\frac{p-q}{r+\chi} \Big) \, d\check{\bfs\beta}(s) .
\end{equation}
Here 
	\begin{align*}
	\begin{split}
		\E T_{ \bfs{\beta} }(p)&= \dfrac{\zeta_r(\pi)}{2\pi}-\frac{1}{4(r+\chi)}
		+b^2\Big[ H'(p-q)+\frac12 \Big(H(p-q)+\frac{p-q}{r+\chi}\Big)^2 \Big]
		\\
		&+\dfrac{1}{8}\Big[ \int \Big(H_I(p-s)+\frac{p-s}{r+\chi}\Big) \, d\check{\bfs\beta}(s) \Big]^2+\dfrac{b}{2} \, \int H'_I(p-s) \, d\check{\bfs\beta}(s). 
	\end{split}
\end{align*}
\end{prop}
\begin{proof}
	By \eqref{Partition gen}, we have (up to a multiplicative constant)
	\begin{equation} \label{Z beta n}
	Z_{ \bfs \beta_n }	=\Theta(p-q)^{1-\frac{6}{\kappa} } \prod_{k=1}^n \Big( \frac{ \Theta_I(p-s_k)}{ \Theta_I(q-s_k)} \Big)^{ \frac{a\beta_k}{n} }	 \exp\Big[ \Big( 1-\frac{6}{k} \Big) \frac{(p-q)^2}{4(r+\chi)}+ a \sum_{ k=1 }^n \frac{\beta_k}{n} \frac{(p-q)(p+q-2s_k)}{4(r+\chi)} \Big]. 
	\end{equation}
By Theorem~\ref{BPZ_ER}, the partition function $Z_{ \bfs \beta_n }$ satisfies the following form of the null-vector equation
	\begin{align*}
		\pa_r Z_{ \bfs \beta_n }&=\frac{\kappa}{2} \pa_p^2 Z_{ \bfs \beta_n }-H(p-q) \pa_q Z_{ \bfs \beta_n } +h_{1,2} H'(p-q) Z_{ \bfs \beta_n } 
		\\
		&-2\Big( \E T_{ \bfs\beta_n }(p)+h_{1,2} \frac{\zeta_r(\pi) }{\pi} \Big) Z_{ \bfs \beta_n }- \sum H_I(p-s_k) \pa_{s_k}Z_{ \bfs \beta_n }+ i( \pa_{ q_k }-\bp_{ q_k } ) Z_{ \bfs \beta_n }. 
	\end{align*}
	It follows from the translation invariance
	$( \pa_p +\pa_q+\sum \pa_{s_k} ) Z_{ \bfs \beta_n }=0$
	that this equation is rewritten as 
	\begin{align*}
		\pa_r Z_{ \bfs \beta_n }&=\frac{\kappa}{2} \pa_p^2 Z_{ \bfs \beta_n }+H(p-q) \pa_p Z_{ \bfs \beta_n } +h_{1,2} H'(p-q) Z_{ \bfs \beta_n } 
		\\
		&-2\Big( \E T_{ \bfs\beta_n }(p)+h_{1,2} \frac{\zeta_r(\pi) }{\pi} \Big) Z_{ \bfs \beta_n }
		+ \sum \Big( H(p-q)- H_I(p-s_k)\Big) \pa_{s_k} Z_{ \bfs \beta_n }+i( \pa_{ q_k }-\bp_{ q_k } ) Z_{ \bfs \beta_n }.
	\end{align*}
	
	Note that by \eqref{Z beta n} we have 
	$$
	\frac{\pa_{ q_k } Z_{ \bfs \beta_n }} {Z_{ \bfs \beta_n }} =-\frac{a}{2} \frac{\beta_k }{n} \Big( H_I(p-s_k)-H_I(q-s_k)+\frac{p-q}{r+\chi} \Big).
	$$
	This gives rise to
	$$
	\pa_r Z_{ \bfs \beta_n }=\frac{\kappa}{2} \pa_p^2 Z_{ \bfs \beta_n }+H(p-q) \pa_p Z_{ \bfs \beta_n } +h_{1,2} H'(p-q) Z_{ \bfs \beta_n } +F_{ \bfs\beta_n } Z_{ \bfs \beta_n },
	$$
	where 
	\begin{equation}
	 	F_{ \bfs\beta_n }
		= -2\Big( \E T_{ \bfs\beta_n }(p)+h_{1,2} \frac{\zeta_r(\pi) }{\pi} \Big) 
		- \frac{a}{2} \sum \frac{\beta_k}{n} \Big( H(p-q)- H_I(p-s_k)\Big) \Big( H_I(p-s_k)-H_I(q-s_k)+\frac{p-q}{r+\chi} \Big).
	\end{equation}
	Furthermore, by Proposition~\ref{AT beta}, we have
	\begin{align*}
		\begin{split}
			\E T_{ \bfs{\beta}_n }(p)&= \dfrac{\zeta_r(\pi)}{2\pi}-\frac{1}{4(r+\chi)}
			+b^2\Big[ H'(p-q)+\frac12 \Big(H(p-q)+\frac{p-q}{r+\chi}\Big)^2 \Big]
			\\
			&+\dfrac{1}{8}\Big[ \sum \frac{\beta_k}{n} \Big(H_I(p-s_k)+\frac{p-s_k}{r+\chi}\Big) \Big]^2+\dfrac{b}{2} \, \sum \frac{\beta_k}{n} H'_I(p-s_k). 
		\end{split}
	\end{align*}
Now proposition follows by letting $n \to \infty$. 
\end{proof}

\begin{eg*}
	Let $\check{\bfs\beta}$ be the uniform (signed) measure on $[-\pi,\pi]$ with total mass $-2b$. In this case, by \eqref{Z msr beta} and the periodicity \eqref{theta I per} of $\Theta_I(r,\cdot)$, we have 
	\begin{equation}
		Z_{ \bfs \beta }=\Theta'(0)^{\frac12} \Theta(p-q)^{1-\frac{6}{\kappa} } \exp\Big[ \Big( 1-\frac{6}{k} \Big) \frac{(p-q)^2}{4(r+\chi)} \Big]. 
	\end{equation} 
Note also that we have 
	\begin{align*}
		\begin{split}
			\E T_{ \bfs{\beta} }(p)&= \dfrac{\zeta_r(\pi)}{2\pi}-\frac{1}{4(r+\chi)}
			+b^2 \Big( H'+\frac{H^2}{2} \Big)(p-q) 
		+b^2\Big( \frac{p-q}{r+\chi} H(p-q) +\frac58 \frac{(p-q)^2}{(r+\chi)^2} \Big).
		\end{split}
	\end{align*}
	On the other hand, it follows from \eqref{add H} that 
	\begin{align*}
		&\quad \frac{1}{2\pi}\int_{-\pi}^\pi \Big( H(p-q)- H_I(p-s)\Big) \Big( H_I(p-s)-H_I(q-s)+\frac{p-q}{r+\chi} \Big)\,ds
		\\
		&=\frac{p-q}{r+\chi}\,H(p-q)-\frac{1}{2\pi}\int_{-\pi}^\pi H_I(p-s) \Big( H_I(p-s)-H_I(q-s) \Big)\,ds
		\\
		&=\frac{p-q}{r+\chi}\,H(p-q)+\Big(H'+\frac{ H^2}{2}\Big)(p-q)+\frac{6}{\pi}\zeta_r(\pi).
	\end{align*}
	Therefore we obtain (up to an additive constant)
	\begin{align*}
		F_{\bfs \beta}=\Big( 2-\frac{\kappa}{2} \Big)\Big( 1-\frac{6}{\kappa} \Big) \Big[ \Big( \frac{H'}{2}+\frac{H^2}{4} \Big)(p-q) +\frac{p-q}{2(r+\chi)}\,H(p-q)+ \frac{(p-q)^2}{4(r+\chi)^2} \Big]. 
	\end{align*}
\end{eg*}

\ms
\noindent 
\begin{eg*}[GFF with general boundary data]
Here we focus on the case when $b=\chi=0$. Namely, we consider the Dirichlet boundary condition and $\bfs{\beta}$ is a measure defined on the upper boundary component of $\CC_r$ satisfying $\int \bfs{\beta}=0$. Then the associated bosonic field $\E \wh{\Phi}_{ \bfs\beta }$ satisfies the boundary condition
	\begin{equation} \label{Bosonic_bc gen}
	\E \wh{\Phi}_{ \bfs\beta }(z)= 
	\begin{cases}
		\hspace{4.5em}-\lambda &\text{if}\quad z \in (0,p)\cup (q,2\pi),
		\\
		\hspace{4.5em}+\lambda &\text{if}\quad z \in (p,q),
		\\
		\displaystyle \Big(1-\frac{2}{a} \int_{0}^{\Re\,z} \,d\bfs{\beta}(s) \Big)\lambda &\text{if}\quad z \in \R_r,
	\end{cases}
\end{equation}
where $\lambda=a \pi =\pi/\sqrt{2}.$ 
Notice that the neutrality condition $\int \bfs{\beta}=0$ is equivalent to the fact that the harmonic function $\E \wh{\Phi}_{ \bfs\beta }$ is well defined in the cylinder. 
To see \eqref{Bosonic_bc gen}, note that if $\bfs\beta$ is a linear combination of atomic measure of the form
$\bfs{\beta}=\sum_{k=1}^n \frac{\beta_k}{n} \cdot \delta_{q_k}$, the correlation function of the bosonic observable has piecewise Dirichlet boundary condition having additional jump of $2\pi a$ at $p$, $-2\pi a$ at $q$ and $2\pi \beta_k/n$ at $q_k$'s. 

For a general background charge $\bfs{\beta}$, we have (up to a multiplicative constant)
\begin{align} 
	\begin{split}
		Z_{ \bfs \beta }(p,q)&=\Theta(p-q)^{-\frac12} \exp\Big( -\frac{(p-q+\sqrt{2} \int s \, d\bfs{\beta}(s) )^2}{8r} \Big) \exp \Big( \frac{1}{\sqrt{2}} \int \log \Big( \frac{\Theta_I(p-s)}{\Theta_I(q-s)} \Big) \, d\bfs{\beta} (s) \Big).
	\end{split}
\end{align}
By Proposition~\ref{Prop_NVEnonatom}, the null-vector equation for $Z_{ \bfs \beta }$ is given by 
	\begin{equation}
	\pa_r Z_{ \bfs \beta }=2 \,\pa_p^2 Z_{ \bfs \beta }+H(p-q) \pa_p Z_{ \bfs \beta } +\frac14 H'(p-q) Z_{ \bfs \beta } +F_{ \bfs\beta } Z_{ \bfs \beta },
\end{equation}
where 
\begin{align*}
	\begin{split}
		F_{ \bfs\beta }
		&= -\frac{3}{2}\frac{\zeta_r(\pi)}{\pi}+\frac{1}{2r}-\frac14 \Big[ \int \Big(H_I(p-s)+\frac{p-s}{r}\Big) \, d\bfs\beta(s) \Big]^2
		\\
		&+ \frac{1}{2\sqrt{2}} \int H_I(p-s) \Big( H_I(p-s)-H_I(q-s)+\frac{p-q}{r} \Big) \, d\bfs\beta(s) .
	\end{split}
\end{align*}
Moreover, by \eqref{add H}, one can also write $	F_{ \bfs\beta }$ as 
\begin{align*}
	\begin{split}
		F_{ \bfs\beta }
		&= -\frac{3}{2}\frac{\zeta_r(\pi)}{\pi}+\frac{1}{2r}-\frac14 \Big[ \int \Big(H_I(p-s)+\frac{p-s}{r}\Big) \, d\bfs\beta(s) \Big]^2
		\\
		&- \frac{1}{2\sqrt{2}} \int \Big( H_I'-\frac{H_I^2}{2} \Big)(p-s) +\Big( H_I'+\frac{H_I^2}{2} \Big)(q-s) \, d\bfs\beta(s)
	+\frac{1}{2\sqrt{2}} \frac{p-q}{r} \int H_I(p-s)\,d\bfs{\beta}(s).
	\end{split}
\end{align*}
\end{eg*}
\begin{rmk*}
	Note that if $\bfs{\beta}=\beta \cdot q_1-\beta \cdot q_2$ for some $q_1, q_2 \in \R_r$ satisfying $q_2=q_1+2\pi$, the boundary condition \eqref{Bosonic_bc gen} reduces to \eqref{non-zero Diri b.c.} below.
	Also it follows from the periodicity of $H(r,\cdot)$ that
	$$
	Z(p,q)=\Theta'(0)^{ \frac12 } \Theta(r,x)^{-\frac12}\exp\Big( -\frac{ x^2-2\mu x }{ 8r } \Big), \qquad \mu=2\sqrt{2}\beta \pi
	$$
	and 
	$$
	F_{ \bfs{\beta} }(p,q)=-\frac{3}{2}\frac{\zeta_r(\pi)}{\pi}+\frac{1}{2r}-\Big( \frac{\beta \pi}{r} \Big)^2.
	$$
\end{rmk*}

\subsection{Periodization}

In this section we present the implementation of weighted summation of the chiral conformal fields. 
We refer to \cite{MR896667} for a similar idea (under the name of \emph{chiral bosonization}) on compact Riemann surfaces. 
This allows us to construct some important examples of SLE martingale-observables in the next section. 

For a suitable weight function $\omega: \mathbb{Z}\to \R_{+}$, let us define
$$
\Psi_{\bfs\beta}^\omega(p,\bfs{\xi}):=\sum_{ n \in \mathbb{Z} } \omega(n) \Psi_{\bfs\beta}(p+2n\pi,\bfs{\xi})
$$ 
and write 
$$
Z_{\bfs\beta}^\omega(p,\bfs{\xi}):=\E \Psi_{\bfs\beta}^\omega(p,\bfs{\xi}), \qquad \Lambda^\omega(r,p,\bfs{\xi}):=\kappa \, \pa_p \log Z_{\bfs\beta}^\omega(p,\bfs{\xi}). 
$$
We remark that in \cite{MR3334276}, Zhan used the weight function 
$$\omega(n):=e^{-\frac{2\pi s_0}{\kappa} n}, \qquad (s_0 \in \R)
$$ 
to construct partition functions having rotational periodicity.

\begin{eg*}
Let us consider the case that all $\xi_j$'s and $q_k$'s are on $\R_r$. Then up to a multiplicative constant, 
\begin{align*} 
	\begin{split}
		Z_{ \bfs{\beta} }(p,\bfs{\xi})&=	 \exp\Big( -\frac{ (ap+\sum_j \tau_j \,\Re\, \xi_j +\sum_k \beta_k \, \Re\, q_k )^2 }{4(r+\chi)} \Big) 
		\\
		&\times \prod_j \Theta_I(r,p-\Re \, \xi_j) ^{a \tau_j} \prod_j \Theta_I(r,p-\Re\, q_j) ^{a \beta_j} 
	 \prod_{j < k} \Theta(r,\xi_j-\xi_k)^{\tau_j \tau_k} \prod_{j, k} \Theta(r,\xi_j-q_k)^{\tau_j \beta_k}.
	\end{split}
\end{align*}
Note that for $\chi < \infty$, the partition function $Z_{ \bfs{\beta} }(p,\bfs{\xi})$ is not $2\pi$-periodic with respect to the space variable $p$. 
Set $\omega(n)\equiv 1$. By \eqref{Theta_I Gauss}, we have
\begin{align*} 
	\begin{split}
		Z_{ \bfs{\beta} }^\omega(p,\bfs{\xi})&=	 \Theta_I\Big(\frac{\kappa}{2}(r+\chi), p+\sum_j \frac{\tau_j}{a} \,\Re\, \xi_j +\sum_k \frac{\beta_k}{a} \, \Re\, q_k +\pi \Big)
		\\
		&\times \prod_j \Theta_I(r,p-\Re \, \xi_j) ^{a \tau_j} \prod_k \Theta_I(r,p-\Re\, q_k) ^{a \beta_k} 
	 \prod_{j < k} \Theta(r,\xi_j-\xi_k)^{\tau_j \tau_k} \prod_{j, k} \Theta(r,\xi_j-q_k)^{\tau_j \beta_k}
	\end{split}
\end{align*}
and 
\begin{align} 
	\begin{split}
		\Lambda_{ \bfs{\beta} }^\omega(p,\bfs{\xi})&=	 \frac{\kappa}{2}H_I\Big(\frac{\kappa}{2}(r+\chi), p+\sum_j \frac{\tau_j}{a} \,\Re\, \xi_j +\sum_k \frac{\beta_k}{a} \, \Re\, q_k +\pi \Big)
		\\
		&+\sqrt{\frac{\kappa}{2}}\Big[ \sum_j \tau_j H_I(r,p-\Re \, \xi_j)+ \sum_k \beta_k H_I(r,p-\Re\, q_k) \Big]. 
	\end{split}
\end{align}
Then it follows from the periodicity of $H_I$ that $	\Lambda_{ \bfs{\beta} }^\omega$ is well-defined in the cylinder $\CC_r$.
\end{eg*}

We now prove Theorem~\ref{main_period}. 

\begin{proof}[Proof of Theorem~\ref{main_period}] 
Let $\XX$ be a string of fields in the OPE family $\FF_{\bfs\beta}$ and set 
$$
R_\xi=\wh{\E} \XX:= \frac{\E \Psi^\omega_{ \bfs{\beta} }(\xi , \bfs{\xi})\XX }{ \E \Psi^\omega_{ \bfs{\beta} }(\xi , \bfs{\xi}) }.
$$ 
By the argument presented in the proof of Theorem~\ref{main}, it suffices to show the following form of BPZ-Cardy equation
\begin{equation} \label{BPZ_C_period}
	\frac{1	}{a^2}\pa_{\xi}^2R_\xi+ 	\Lambda_{ \bfs{\beta} }^\omega \pa_{\xi}R_\xi
	= \LL_{v_{\xi}} R_\xi +\pa_r R_\xi.
\end{equation}

By Theorem~\ref{BPZ_ER}, we have
	\begin{align*}
	\frac{1}{a^2}\pa^2_{\xi}\E \Psi_{\bfs{\beta}}(\xi , \bfs{\xi}) \XX &= \Big( \LL_{v_{\xi}}+\pa_r+2h_{1,2} \frac{\zeta_r(\pi)}{\pi}+2\E T_{\bfs\beta}(\xi) \Big) \E \Psi_{\bfs{\beta}}(\xi , \bfs{\xi}) \XX.
\end{align*}
Therefore we obtain 
	\begin{align*}
		\begin{split}
	\frac{1}{a^2}\pa^2_{\xi}\E \Psi_{\bfs{\beta}}(\xi , \bfs{\xi}) \XX &= \Big( \LL_{v_{\xi}}+\pa_r+2h_{1,2} \frac{\zeta_r(\pi)}{\pi} \Big) \E \Psi_{\bfs{\beta}}^\omega(\xi , \bfs{\xi}) \XX
	+2\sum_{n \in \mathbb Z} \E T_{\bfs\beta}(\xi+2n\pi) \E \Psi_{\bfs{\beta}}(\xi+2n\pi , \bfs{\xi}) \XX. 
			\end{split}
\end{align*}
By Proposition~\ref{AT beta} and the neutrality condition $\int \bfs \beta=0$,
	\begin{align}
	\begin{split}
		\E T_{ \bfs{\beta} }(\xi)
		&=\dfrac{1}{32}\Big[ \sum \beta_k \Big\{ H(r,q_k-\xi)+H(r,\bar{q}_k-\xi)+2 \frac{\Re\,q_k}{r+\chi} \Big\} \Big]^2
		\\
		&+\dfrac{b}{4} \, \sum \beta_k \Big\{ H'(r,q_k-\xi)+H'(r,\bar{q}_k-\xi) \Big\}
	+ \dfrac{\zeta_r(\pi)}{2\pi}-\frac{1}{4(r+\chi)}. 
	\end{split}
\end{align}
Thus we have for any $n \in \mathbb{Z}$,
\begin{equation}
	\E T_{ \bfs{\beta} }(\xi+2n\pi)=	\E T_{ \bfs{\beta} }(\xi),
\end{equation}
which leads to 
	\begin{align}
	\frac{1}{a^2}\pa^2_{\xi}R_\xi Z^\omega_{\bfs{\beta}}(\xi , \bfs{\xi}) &= \Big( \LL_{v_{\xi}}+\pa_r+2h_{1,2} \frac{\zeta_r(\pi)}{\pi}+2\E T_{\bfs\beta}(\xi) \Big) R_\xi Z^\omega_{\bfs{\beta}}(\xi , \bfs{\xi}).
\end{align}
By applying the trivial string $\mathcal X \equiv 1$ to the above, we have the null-vector equation 
\begin{align}
	\frac{1}{a^2}\pa^2_{\xi} Z^\omega_{\bfs{\beta}}(\xi , \bfs{\xi})&=\Big( \LL_{v_{\xi}}+\pa_r+2h_{1,2} \frac{\zeta_r(\pi)}{\pi}+2\E T_{\bfs\beta}(\xi) \Big) Z^\omega_{\bfs{\beta}}(\xi , \bfs{\xi}).
\end{align}
Subtracting the above two equations, we obtain \eqref{BPZ_C_period}, which completes the proof.

\end{proof}

\subsection{Screening} \label{Screening_sub}

From now on, we focus on an SLE$(\kappa,\Lambda)$ starting from $p$ with one force point $q$. 
In this subsection we discuss the screening method to construct family of solutions to the null-vector equation
\begin{equation} 	\label{Lawler-Zhan PDE}
	\displaystyle \pa_r Z = \frac{\kappa}{2} Z'' + H Z' + \Big( \frac{3}{\kappa}-\frac{1}{2} \Big)H' Z +C(r) Z
\end{equation}
for the chordal type annulus partition function and prove Theorem~\ref{main_SLE Z}. The screening is a well-known method in CFT to find solutions of the Knizhnik-Zamolodchikov type equations, see e.g., \cite{MR2016311,MR1629472}.

In general, it has been turned out that the screening method is useful particularly in the theory of multiple SLEs. For instance, in \cite{MR2253875}, Dub\'{e}dat found Euler integral representations for solutions to a system of PDEs characterizing commuting SLEs in $\H$ by means of screening. We refer the reader to \cite{kytola2016pure, MR3922531, MR4019203,MR3296159} and references therein for recent studies on solutions to such PDE system and the geometric properties of associated multiple SLEs. 
 
For $q_1,q_2 \in \C$, we set 
\begin{equation} \label{tau beta}
\bfs\beta= \beta \cdot q_1- \beta \cdot q_2.
\end{equation}
Here we assume that $q_1, q_2$ are two different fibers of a marked point in annulus satisfying $q_2=q_1 + 2\pi$. 
For $p,q \in \R$ and $\zeta \in \C$, write 
\begin{equation}
\Psi_\beta^\sharp(p,q,\zeta):= \OO_{\bfs\beta}[ a \cdot p+a \cdot q-2a \cdot \zeta,\,\bfs{0}], \qquad Z_\beta^{\sharp}:=\E \Psi_\beta^{\sharp}. 
\end{equation} 
Then the conformal dimension of $\Psi^{\sharp}$ at $p,q$ and $\zeta$ are given by 
\begin{equation}\label{pre screen dim}
\lambda_p=\lambda_q=\frac{6-\kappa}{2\kappa}, \qquad \lambda_\zeta=1.
\end{equation}
By \eqref{Partition gen}, we have 
\begin{align*}
\begin{split}
Z^\sharp_{\beta }(p,q,\zeta)&=\Theta'(0)^{\frac{6}{\kappa}} \Theta_\chi(p-q)^{\frac{2}{\kappa}} \Theta_\chi(p-\zeta)^{-\frac{4}{\kappa} } \Theta_\chi(q-\zeta)^{-\frac{4}{\kappa} } 
 \Big| \frac{ \Theta_\chi(p-z_0) }{ \Theta_\chi(p-z_1) } \frac{ \Theta_\chi(q-z_0) }{ \Theta_\chi(q-z_1) } \frac{ \Theta_\chi(\zeta-z_1)^2 }{ \Theta_\chi(\zeta-z_0)^2 } \Big|^{a\beta}.
\end{split}
\end{align*}
In particular since $q_2-q_1 = 2\pi$ and \eqref{theta per}, this expression reduces to 
(up to a multiplicative constant)
\begin{align}\label{pre Z}
\begin{split}
Z^\sharp_{\beta }(p,q,\zeta)&=\Theta'(0)^{\frac{6}{\kappa}} \exp\Big( \frac{\beta^2 \pi^2}{r+\chi} \Big)\Theta(p-q)^{\frac{2}{\kappa}}
\Theta(p-\zeta)^{-\frac{4}{\kappa} } \Theta(\zeta-q)^{-\frac{4}{\kappa} } \exp\Big( - \frac{ (\Sigma-2\beta \pi )^2 }{ 4(r+\chi) } \Big),
\end{split}
\end{align}
where $\Sigma=a ( p+q-2\zeta )$. 
For generic $\kappa>0$, we need to specify a principal branch (as a function of $\zeta$) in the expression \eqref{pre Z}. In what follows, we use the branch cut $[-\infty,q] \cup [p,\infty]$.

For a closed contour $\gamma$, set
\begin{equation} \label{Psi beta}
	\Psi_\beta \equiv \Psi_\beta(p,q):= C(\kappa) \oint_{\gamma} \Psi_\beta^\sharp(p,q,\zeta)\, d\zeta,
\end{equation}
where the normalization constant $C(\kappa)$ is given by
\begin{equation} \label{scr normal const}
	C(\kappa)=\sin^{-2} \Big(\frac{4\pi}{\kappa}\Big) \, \frac{1}{4\,\Gamma(1-4/\kappa)}.
\end{equation} 
Here $\Gamma$ is the Gamma function. It is easy to see that $C(\kappa)$ has a simple pole at $\kappa$ if and only if $4/\kappa$ is a positive integer. By \eqref{pre Z}, the partition function 
\begin{equation} \label{scr Z}
Z_\beta \equiv Z_\beta(r,p-q):=\E \Psi_\beta = C(\kappa) \oint_{\gamma} Z_\beta^\sharp(p,q,\zeta)\,d\zeta
\end{equation}
is evaluated as (up to a multiplicative constant)
\begin{align}\label{Z_beta} 
\begin{split}
Z_\beta&= \Theta(p-q)^{\frac{2}{\kappa}} \oint_\gamma \Theta(p-\zeta)^{-\frac{4}{\kappa} } \Theta(q-\zeta)^{-\frac{4}{\kappa} } \exp\Big( - \frac{ (\Sigma-2\beta \pi )^2 }{ 4(r+\chi) } \Big) \, d\zeta.
\end{split}
\end{align}
\begin{rmk*}
	In general, partition functions $Z_\beta(r,\cdot)$ do not have periodic property with respect to $2\pi$ except the case $\chi=\infty$ (i.e., ER b.c.). On the other hand, for $\chi=0$ (i.e., Dirichlet b.c.) $Z_\beta$ has following rotational periodicity with respect to $2ir$. 
	\begin{equation} \label{2ir rot period}
	Z_\beta(r,x+2ir)= \exp\Big( a (2\beta-a) \pi i \Big) Z_\beta(r,x).
	\end{equation}
\end{rmk*}

We denote by $\Pi_1 \equiv \Pi_1(\C \sm \{p,q \}, \zeta_0)$ the fundamental group of $\C \sm \{ p,q \}$ with base point $\zeta_0$. 
If $4/\kappa$ is not a positive integer, in order to make the partition function $Z_\beta$ single-valued, the homotopy class $[\gamma]$ should reside in the commutator subgroup of the $\Pi_1$. Moreover, this specific choice \eqref{scr normal const} of $C(\kappa)$ makes the expression \eqref{scr Z} non-trivial for all values of $\kappa >0$ after removing possible singularities. We will explain this in Proposition~\ref{screen null} below. 

The simplest example of such $\gamma$ is the Pochhammer contour $\mathcal{P}(p,q)$ entwining $p$ and $q$.
See Figure~\ref{Pochh} for the description of $\mathcal{P}(p,q)$, where we indicate the branch cut as a dotted line. 
It is an elementary but remarkable property that the winding number of $\mathcal{P}(p,q)$ with respect to each point $p,q$ is zero. This property allows us to define $Z^{\sharp}(p,q,\cdot)$ as a single-valued analytic function on the contour. 

Indeed, the Pochhammer contour is the typical example of path contained in the class of \emph{first twisted homology} or \emph{loaded cycle}, see \cite{yoshida2013hypergeometric,MR2016311}. In other words, for any (single-valued) function $G(\zeta)$ defined on $\mathcal{P}(p,q)$, we have
\begin{equation} \label{boundary Poch}
	\oint_{\mathcal{P}(p,q)} \pa_\zeta G(\zeta) \, d\zeta =0 .
\end{equation}

We now present the following version of null-vector equations, which immediately leads to the first assertion of Theorem~\ref{main_SLE Z}.
\begin{prop} \label{screen null}
	For any $\kappa>0$, $Z_\beta(r,\cdot)$ is a (non-trivial) real-valued solution to 
	\begin{equation} 	\label{Z PDE_cho}
		\displaystyle \pa_r Z = \frac{\kappa}{2} Z'' + H Z' + \Big( \frac{3}{\kappa}-\frac{1}{2} \Big)H' Z +C(r) Z,
	\end{equation}
	where $C(r)$ is given as 
		\begin{align} \label{null vec const1}
	\begin{split}
	C(r)&=-\frac{6}{\kappa} \frac{\zeta_r(\pi)}{\pi}+\frac{1}{2(r+\chi)}-\Big( \frac{\beta \pi}{r+\chi} \Big)^2. 
	\end{split}
	\end{align}

\end{prop}
\begin{proof}
	Recall that $Z_\beta^{\sharp}(p,q,\zeta)=\E \Psi_\beta^{\sharp}(p,q,\zeta)$. Combining Corollary~\ref{pre screen null} with \eqref{pre screen dim}, 
	\begin{align*}
		\pa_r Z_\beta^\sharp&= \frac{\kappa}{2} \pa^2 Z_\beta^\sharp+\Big[\Big(\frac{3}{\kappa}-\frac{1}{2}\Big) H'+H'(p-\zeta) \Big]Z^\sharp_\beta
		-\Big( H\pa_q Z^\sharp_{\beta }+H(p-\zeta) \pa_{\zeta}Z^\sharp_\beta \Big)+C(r)Z^\sharp_\beta.
	\end{align*}
	Here we simplify the term $C(r,\bfs{q})$ in \eqref{null vec const} by the ($2\pi$-) periodicity of $H(r,\cdot)$ and the fact $q_2-q_1=2\pi$. 
	
	Observe from the evaluation \eqref{pre Z} that
	$(\pa+\pa_{q}+\pa_{\zeta}) Z^\sharp(p,q,\zeta)=0$. 
	Therefore the above equation is rewritten as 
	\begin{align}\label{pre screen null vec}
		\begin{split}
			\pa_r Z^\sharp_\beta &= \frac{\kappa}{2} \pa^2 Z^\sharp_\beta + H \pa Z^\sharp_\beta + \Big( \frac{3}{\kappa}-\frac{1}{2} \Big)H' Z^\sharp_\beta+C(r) Z^\sharp_\beta+\pa_{\zeta}F(\zeta),
		\end{split}
	\end{align}
	where 
	$$
	F(\zeta):= H(p-q) \,Z^\sharp_\beta- H(p-\zeta)Z^\sharp_\beta. 
	$$
	By integrating \eqref{pre screen null vec} with respect to screening variable $\zeta$ along $\mathcal{P}(p,q)$, the desired null-vector equation \eqref{Z PDE_cho} follows from \eqref{boundary Poch}.
	
	It remains to verify the real-valuedness and non-triviality of $Z_\beta(r,\cdot)$. For $\kappa >4$, the expression \eqref{scr Z} can be decomposed as
	\begin{align} \label{Pochh int decom}
		Z_\beta&= C(\kappa)\sum_{j=1}^{4} \int_{\gamma_j} Z_\beta^\sharp(p,q,\zeta)\,d\zeta,
	\end{align}
	where the path $\gamma_j$'s are given as in Figure~\ref{Decomp Pochh} below. 
	\begin{figure}[h]
		\begin{center}
			\includegraphics[width=5.0in]{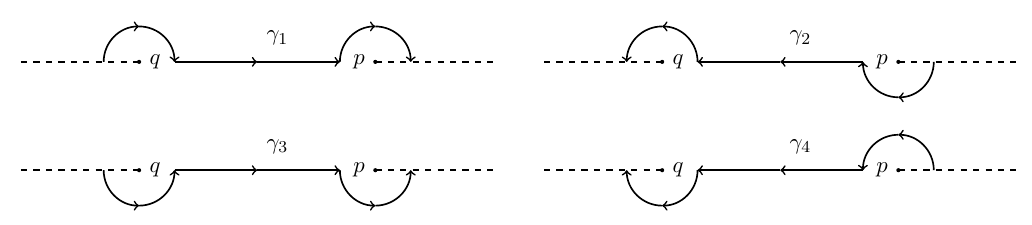}
		\end{center}
		\caption{A contour deformation}\label{Decomp Pochh}
	\end{figure}
	
	We denote by $f(\cdot)$ the principal branch of $Z^\sharp_\beta(p,q,\cdot)$ defined on $\C \setminus ( [-\infty,q] \cup [p,\infty] )$. Then one can express $Z_\beta^\sharp(p,q,\cdot)$ on $\gamma_j$ as
	$$
	Z^\sharp_\beta(p,q,\zeta)=
	\begin{cases}
	f(\zeta) & \textrm{if}\quad \zeta \in \gamma_1,
	\smallskip 
	\\
	f(\zeta)\, e^{-2\pi i \frac{4}{\kappa}\pi} & \textrm{if}\quad \zeta \in \gamma_2,
	\smallskip 
	\\
	f(\zeta) & \textrm{if}\quad \zeta \in \gamma_3,
\smallskip 
	\\
	f(\zeta)\, e^{2\pi i \frac{4}{\kappa}\pi} & \textrm{if}\quad \zeta \in \gamma_4.
	\end{cases}
	$$
	Therefore the Pochhammer contour integration \eqref{Pochh int decom} coincides with the following Euler type integral 
	\begin{equation} \label{Euler integral}
		Z_\beta(r,p-q)=\frac{1}{\Gamma(1-4/\kappa)} \int_{q}^{p} Z_\beta^\sharp(p,q,t) \, dt. 
	\end{equation}
	The integrability condition is satisfied both at $p$ and $q$ if and only if $\kappa >4$. Note that the integrand $Z_\beta^\sharp(p,q,\cdot)$ is a positive function since $\Theta_\chi(r,\cdot)$ is a non-negative function in the interval $[0,2\pi]$ vanishing only at $0,2\pi$. Therefore the positivity of $Z_\beta(r,\cdot)$ for $\kappa>4$ also follows. 
	
	We remark that as a function of $\kappa$, $Z_\beta$ can be interpreted as an analytic continuation of the right-hand side of \eqref{Euler integral}, which implies the real-valuedness of $Z_\beta$ for $\kappa \le 4$. 
	Now it remains to check the non-triviality of $Z_\beta$.
	By \eqref{degen gen}, we have (up to a multiplicative constant)
	\begin{align*}
		\begin{split}
			Z_\infty(p-q)&:=\lim\limits_{r\to \infty} Z_\beta(r,p-q)
		=C(\kappa) \sin^{\frac{2}{\kappa}}\Big( \frac{p-q}{2} \Big) \oint_{\mathcal{P}(p,q)} \sin^{-\frac{4}{\kappa}} \Big( \frac{p-\zeta}{2} \Big) \sin^{-\frac{4}{\kappa}}\Big( \frac{\zeta-q}{2} \Big)\, d\zeta.
		\end{split}
	\end{align*}
	Therefore for $\kappa >4$, 
	\begin{align*}
		Z_\infty(p-q)&=\frac{1}{\Gamma(1-4/\kappa)} \sin^{\frac{2}{\kappa}}\Big( \frac{p-q}{2} \Big) \int_{q}^{p} \sin^{-\frac{4}{\kappa}}\Big( \frac{p-t}{2} \Big) \sin^{-\frac{4}{\kappa}}\Big( \frac{t-q}{2} \Big) \, dt.
	\end{align*}
	Recall that the regularized hypergeometric function
	$$
	{}_2\textbf{F}_1(a,b;c;z):=\frac{ {}_2F_1(a,b;c;z) }{\Gamma(c)}
	$$
	is an entire function of $a,b,c$. 
	Then by \cite[Eq.(15.6.1)]{olver2010nist}, one can express $Z_\infty$ in terms of $	{}_2\textbf{F}_1$ as follows: 
	\begin{equation} \label{Z deg}
		Z_\infty(x)=\cos^{\frac{2}{\kappa}-1} \Big( \frac{x}{4} \Big) \sin^{1-\frac{6}{\kappa}} \Big( \frac{x}{4} \Big) {}_2\textbf{F}_1\Big(\frac{1}{2}, 1-\frac{4}{\kappa}; \frac{3}{2}-\frac{4}{\kappa}, -\tan^2\Big(\frac{x}{4}\Big) \Big).
	\end{equation}
	Therefore by the identity theorem, \eqref{Z deg} holds not only for $\kappa >4$ but also for $\kappa\le4$. 
	Thus we conclude that $Z_\beta(r,\cdot)$ is a non-trivial function for all $\kappa >0$. 
\end{proof}

\begin{rmk*}
	We emphasize that not only the partition functions $Z_\beta(r,x)$, but also their derivatives with respect to the ``variables'' $\beta$ or $\chi$
	\begin{equation}\label{Z deri}
	\pa_\beta^m \pa_\chi^n Z_\beta(r,x) \qquad (m,n\in \mathbb{Z}_+)
	\end{equation}
	satisfy the null-vector equations \eqref{Lawler-Zhan PDE}. 
	The partition functions of the form \eqref{Z deri} can be constructed from \eqref{Z_beta} by taking proper non-degenerate limits. 
\end{rmk*}

\begin{eg*} As a consequence of Proposition~\ref{screen null}, it is easy to observe that $Z_\infty$ in \eqref{Z deg} solves following ordinary differential equation:
\begin{equation} \label{null vec deg}
	0 = \frac{\kappa}{2} Z''_\infty + \cot\Big( \frac{x}{2}\Big) Z'_\infty + \Big( \frac{3}{\kappa}-\frac{1}{2} \Big)\cot\Big( \frac{x}{2}\Big)' Z_\infty -\frac{1}{2\kappa} Z_\infty.
\end{equation}
The ordinary differential equation \eqref{null vec deg} was introduced by Zhan as a commutation relation for the radial SLE($\kappa,\Lambda$) process, see \cite[Section 4.5]{MR3334276}. 
In particular for the values of $\kappa$ such that $4/\kappa$ is a positive integer, the expression \eqref{Z deg} is simplified and expressed in terms of trigonometric functions, see e.g., \cite[Section 15.4]{olver2010nist}. See for instance Table~\ref{table partition deg} below, where we write $\sin_2(x):=\sin(x/2)$ and $\cot_2(x)=\cot(x/2)$ to lighten notations.
\end{eg*}

\begin{table}[h!] \renewcommand{\arraystretch}{2}
	\caption{ Partition functions $Z_\infty$ } \label{table partition deg}
	\begin{center}
		\begin{tabular}{c|c}
			\toprule 
			$\kappa$ & $Z_\infty(\cdot)$ \\
			\midrule
			$4$ &$\sin_2^{-1/2}$ \\ 
			$2$ &$\sin_2^{-1} \cot_2$ \\
			$4/3$ & $\sin_2^{-3/2} \Big(3\cot_2^2-2\cot'_2+\dfrac{1}{3} \Big) $ \\
			$1$ & $\sin_2^{-2} \Big(4\cot_2^3-6\cot_2\cot'_2+\cot''_2+\cot_2 \Big) $ \\
			\bottomrule
		\end{tabular}
	\end{center}
\end{table}

\ms
\begin{eg*}[Some particular solutions] Observe that if $4/\kappa$ is a positive integer, $Z_\beta^\sharp(p,q,\cdot)$ is a well-defined meromorphic function in $\C$ having poles only at $p$ and $q$. Therefore one can choose sufficiently small circle around $p$ as an integration contour in \eqref{scr Z}. In this case, it is easy to calculate $Z\equiv Z_\beta$ by residue calculus. We present some particular solutions (up to a multiplicative constant) in Table~\ref{table partition} for ER boundary condition with $\beta=0$.
\end{eg*}

\begin{table}[h!] \renewcommand{\arraystretch}{2}
	\caption{Partition functions $Z$} \label{table partition}
	\begin{center}
		\begin{tabular}{c|c}
			\toprule 
			$\kappa$ & $Z(r,\cdot)$ \\
			\midrule
			$4$ & $\Theta^{-1/2}$ \\ 
			$2$ & $\Theta^{-1} H$ \\
			$4/3$ & $\Theta^{-3/2} \Big(3H^2-2H'+4\dfrac{\zeta_r(\pi)}{\pi} \Big) $\\
			$1$ & $\Theta^{-2} \Big(4H^3-6HH'+H''+12 \dfrac{\zeta_r(\pi)}{\pi} H \Big) $ \\
			\bottomrule
		\end{tabular}
	\end{center}
\end{table}

Now we present the proof of Theorem~\ref{main_SLE Z}. Recall that
$$\Psi_\beta \equiv \Psi_\beta(\xi,q):= C(\kappa) \oint_{\mathcal{P}(\xi,q)} \Psi_\beta^\sharp(\xi,q,\zeta)\, d\zeta. $$
By definition, $\E\Psi=Z$ and $\Lambda=\kappa (\log Z)'$. 

\begin{proof}[Proof of Theorem~\ref{main_SLE Z}]
	By Proposition~\ref{screen null}, it remains to prove the second assertion. For any string $\XX$ of fields in the OPE family $\FF_{\bfs\beta}$, let 
	$$R_\xi=\wh{\E} \XX:= \frac{\E\Psi_\beta(\xi,q) \XX}{ \E\Psi_\beta(\xi,q) }.$$ 
	Note that all we need to show is the following version of BPZ-Cardy equation
	\begin{equation} \label{BPZ_C_scr}
		\frac{1	}{a^2}\pa_{\xi}^2R_\xi+ \Lambda \pa_{\xi}R_\xi
		= \LL_{v_{\xi}} R_\xi +\pa_r R_\xi.
	\end{equation}
	By Theorem~\ref{BPZ_ER}, we have
	\begin{align}
		\begin{split}
			\label{scr BPZ}
			\frac{1	}{a^2	}\pa_{\xi}^2 \E[ \Psi_\beta^\sharp(\xi,q,\zeta) \XX] &
			=\LL_{v_{\xi} }(q, S_\XX )\E [\Psi_\beta^\sharp(\xi,q,\zeta) \XX]+\LL_{v_{\xi} }(\zeta )\E [\Psi_\beta^\sharp(\xi,q,\zeta) \XX]
			\\
			&+ (\pa_r-C(r) ) \E[\Psi_\beta^\sharp(\xi,q,\zeta) \XX],
		\end{split}
	\end{align}
	where $C(r)$ is given as \eqref{null vec const1}. Note that since $\Psi_\beta^\sharp$ is a $(1,0)$-differential with respect to $\zeta$, we have
	\begin{equation*}
		\LL_{v_{\xi} }(\zeta )\E \Big[\Psi_\beta^\sharp(\xi,q,\zeta) \XX \Big]=\pa_\zeta \Big[		v_{\xi}(\zeta) \E [\Psi_\beta^\sharp(\xi,q,\zeta) \XX]\Big].
	\end{equation*}
	Therefore by \eqref{boundary Poch}, 
	$$ \oint_{\mathcal{P}(p,q)} \LL_{v_{p} }(\zeta )\E [\Psi_\beta^\sharp(p,q,\zeta) \XX]\, d\zeta=0.$$
	Integrating \eqref{scr BPZ} with respect to $\zeta$ along $\mathcal{P}(p,q)$, we obtain
	\begin{equation}\label{gen screen}
		\frac{1	}{a^2} \pa_{\xi}^2\Big(Z_\beta R_\xi \Big)=\LL_{v_{\xi}} \Big(Z_\beta R_\xi\Big)+ (\pa_r-C(r) ) \Big(Z_\beta R_\xi \Big).
	\end{equation}
	Combining \eqref{gen screen} with Proposition~\ref{screen null} we conclude \eqref{BPZ_C_scr}, which completes the proof.
\end{proof}

\begin{rmk*}
	For $\kappa=4$, let $\gamma$ be a contour around $q$ not encircling $p$ and write
	$$
	\Psi=\OO[a\cdot p-a\cdot q],\qquad \Psi_\gamma=\int_{\gamma} \OO[a\cdot p+a\cdot q-2a\cdot \zeta] \, d\zeta. 
	$$
	Then one can easily see that $\E \Psi=\E \Psi_\gamma=\Theta_\chi^{-\frac12}$ up to a multiplicative constant.  
	Thus the insertion of such one-leg operators produces martingale-observables for same SLE$(4,\Lambda)$, where $\Lambda=-H_\chi$. 
	However, we emphasize that as operators $\XX \to \wh{\XX}$, they are not the same in general. 

	For instance, let us assume that $\gamma$ encircles the node $z$. Then for ER boundary condition, we have 
	$$
	\frac{\E J(z) \Psi(p,q) }{\E \Psi(p,q)}=X(z):=w_z'\Big(H(w_p-w_z)-H(w_q-w_z)\Big),
	$$
	whereas 
	$$
	\frac{\E J(z) \Psi_\gamma(p,q) }{\E \Psi_\gamma(p,q)}=X(z)+Y(z), \qquad Y(z):=\frac{w_z' \,\Theta'(0) \Theta(w_p-w_q)}{ \Theta(w_p-w_z) \Theta(w_q-w_z) }.
	$$
	Combining Theorems~\ref{main} and ~\ref{main_SLE Z}, the non-random field $Y(z)$ is also a martingale-observable for SLE$(4,\Lambda)$.
\end{rmk*}

\section{Examples of martingale-observables} \label{Section_Examples}

This section is devoted to applying the theory in previous sections to construct martingale-observables associated with the scaling limits of lattice models. As a consequence, some examples are indicated as well.

\subsection{Bosonic observables} Since Schramm and Sheffield discovered the relation between level lines of discrete GFF and SLE$(4)$ type curves \cite{MR2486487}, it has played an important role in the study of random conformal geometries, see e.g., \cite{MR2525778}. 
A key ingredient for the coupling of GFF and SLE in the continumm level is the associated bosonic observables. In this spirit, Izyurov and Kyt\"{o}l\"{a} proposed a general framework to develop Schramm-Sheffield's coupling relations in various conformal setups including doubly connected domains with Dirichlet or Dirichlet-Neumann mixed boundary conditions with several force points \cite{MR3010393}. See also \cite{katori2020gaussian,MR3825935} and references therein for further examples of GFF/SLE couplings in different conformal setups.

For $\kappa=4$, let us consider the one-leg operator $\Psi_{ \beta }$ of the form
\begin{equation}\label{Psi k=4}
\Psi_{ \beta }(p,q):=\OO_{\bfs\beta}[ a\cdot p-a \cdot q ],
\end{equation} 
where $\bfs\beta$ is given as \eqref{tau beta}.
For the chordal case that $p,q\in \R$, we have 
\begin{equation}\label{k=4 Z}
Z_{\beta}(r,x)=\Theta(r,x)^{-\frac12}\exp\Big( -\frac{ (x-\mu)^2 }{ 8(r+\chi) } \Big), \qquad \mu=2\beta \pi/a.
\end{equation}
Here we write $p=x$ and $q=0$.
\begin{rmk*}
For the Dirichlet boundary condition (i.e., $\chi=0$), such GFFs correspond to those studied in \cite{MR3010393,MR2651436}. The partition functions \eqref{k=4 Z} are also presented in \cite[Section 8.1]{MR3334276} as a specific solutions for the null-vector equation with $\kappa=4$. 
Here one may find a simple geometric interpretation of $\beta \in \R$ since the bosonic field $\wh{\Phi}_{ \bfs\beta }$ has a piecewise Dirichlet boundary condition: 
\begin{equation} \label{non-zero Diri b.c.}
\E \wh{\Phi}_{ \bfs\beta }(z)= 
\begin{cases}
\hspace{1.5em}-\lambda &\text{if}\quad z \in (0,p)\cup (q,2\pi),
\\
\hspace{1.5em}+\lambda &\text{if}\quad z \in (p,q),
\\
(1-\mu/\pi)\lambda &\text{if}\quad z \in \R_r,
\end{cases}
\end{equation}
where $\lambda=a\pi=\pi/\sqrt{2}$.
\end{rmk*}
For SLE($4,\Lambda$) processes associated with partition functions of the form \eqref{k=4 Z}, we obtain their inner circle hitting probabilities using basic properties of Brownian motions. 
\begin{prop}
Let $\eta$ be the trace of $\SLE(4,\Lambda)$ whose partition function is given as \eqref{k=4 Z}. Then the probability $\P_{\chi}^\mu(x)$ that $\eta$ hits the inner boundary component is given by 
\begin{align} \label{Prob:hitting}
\begin{split}
\P_{\chi}^\mu(x)&= \frac{1}{4\pi} \sqrt{ \frac{r+\chi}{\chi} } e^{\frac{(x-\mu)^2}{8(r+\chi) } }
\int_{0}^{2\pi} e^{ -\frac{(s-\mu)^2}{\sqrt{8\chi}}}\Big[ \Theta_I\Big( \frac{r}{2}, \frac{x-s}{2}+\pi \Big)-\Theta_I\Big( \frac{r}{2}, \frac{x+s}{2}+\pi \Big) \Big] \, ds.
\end{split}
\end{align} 
\end{prop}
\begin{proof}
By definition and \eqref{k=4 Z}, the driving process $\xi_t$ of $\SLE(4,\Lambda)$ is given by
$$
d\xi_t= 2\,dB_t-\Big( H(r-t,\xi_t-q_t )+\frac{\xi_t-q_t-\mu}{r+\chi-t} \Big)\, dt, \qquad q_t:=\tilde{g}_t(q).
$$
We denote by $X_t:=\xi_t-q_t$ the angle difference process. Then $\P_{\chi}^\mu(x)$ is the probability that $X_t$ stays in the interval $[0,2\pi]$ up to time $r$. 

On the other hand, due to Loewner's equation, $X_t$ satisfies the  following SDE:
$$
dX_t:=2\,dB_t-\frac{X_t-\mu}{r+\chi-t} \, dt, \qquad X_0=x.
$$
Therefore the process $X_t$ is identified to a Brownian bridge starting from $x$, which reaches $\mu$ at time $t=r+\chi$, see e.g., \cite{MR2001996}. Thus we obtain 
\begin{align*}
\P_{\chi}^\mu(x)&=\Pr \Big\{ 0 < \min_{0 \le s \le r} B(s) < \max_{0 \le s \le r} B(s) < \pi \Big| B(0)=\frac{x}{2}, B(r+\chi)=\frac{\mu}{2} \Big\}
\\
&=\sqrt{2\pi(r+\chi)}\exp\Big( \frac{(\mu/2-x/2)^2}{2(r+\chi)} \Big) \int_{0}^{\pi} \frac{ 1 }{\sqrt{2\pi\chi}} \exp\Big( -\frac{(z-\mu/2)^2}{2\chi} \Big)\, d\sigma(z),
\end{align*}
where $d\sigma$ is given as 
\begin{align*}
d\sigma(z)&=\Pr \Big\{ 0 < \min_{0 \le s \le r} B(s) < \max_{0 \le s \le r} B(s) < \pi, B(r)\in dz \Big| B(0)=\frac{x}{2} \Big\}
\\
&=\frac{1}{\sqrt{2\pi r}} \sum_{n \in \mathbb{Z}} \Big[ \exp\Big(-\frac{(x-2z+4n\pi)^2}{2r}\Big)-\exp\Big(-\frac{(x+2z+4n\pi)^2}{2r}\Big) \Big]\,dz.
\end{align*}
Now proposition follows from \eqref{Theta_I Gauss}.
\end{proof}

\begin{rmk*}
Note that for ER boundary condition ($\chi=\infty$), the angle difference process $X_t$ is simply a Brownian motion with speed $2$. In this case \eqref{Prob:hitting} is given as 
$$
\P_{\infty}^\mu(x)=\frac{1}{4\pi} \int_{0}^{2\pi} \Big[\Theta_I\Big( \frac{r}{2}, \frac{x-s}{2}+\pi \Big)-\Theta_I\Big( \frac{r}{2}, \frac{x+s}{2}+\pi \Big) \Big]\, ds,
$$
which corresponds to \cite[Theorem 7.45]{morters2010brownian}. 
On the other hand, for Dirichlet boundary condition ($\chi=0$), it follows from the Gaussian approximation of the Dirac delta measure 
$$
\frac{1}{\sqrt{8 \chi \pi}} e^{ -\frac{(s-\mu)^2}{\sqrt{8\chi}}} \to \delta_\mu(s),
$$
that 
$$
\P_{0}^\mu(x)=\mathbf{1}_{(0,2\pi)}(\mu) \cdot \sqrt{ \frac{r}{2\pi} }\, e^{ \frac{(x-\mu)^2}{8r} } \Big[ \Theta_I\Big( \frac{r}{2}, \frac{x-\mu}{2}+\pi \Big)-\Theta_I\Big( \frac{r}{2}, \frac{x+\mu}{2}+\pi \Big) \Big].
$$
Observe here that the requirement $\mu \in (0,2\pi)$ for the positive probability is equivalent to the fact that the boundary value of $\wh{\Phi}_{ \bfs{\beta} }$ on the inner circle is between two heights on the outer circle, see \eqref{non-zero Diri b.c.}.

	\begin{figure}[h!]
	\begin{center}
	\begin{subfigure}[h]{0.48\textwidth}
		\centering 
			\includegraphics[width=0.8\textwidth]{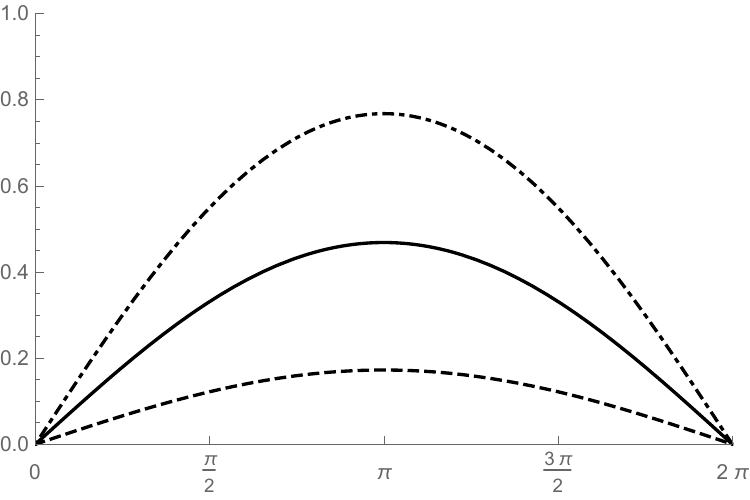}
		\caption{$\P_{\infty}^\mu(x)$}
	\end{subfigure}
 	\begin{subfigure}[h]{0.48\textwidth}
 		\centering 
 	\includegraphics[width=0.8\textwidth]{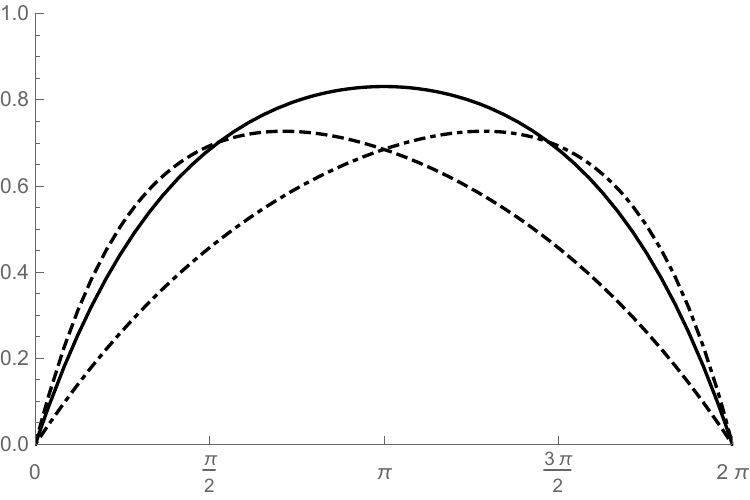}
 	\caption{$\P_{0}^\mu(x)$}
 \end{subfigure}
	\end{center}
	\caption{ (A): The plot displays $\P_{\infty}^\mu(x)$ with modulus $r=1$ (dot-dashed line), $r=2$ (full line) and $r=4$ (dashed line). (B): The plot displays $\P_{0}^\mu(x)$ with $r=2$ for $\mu=\pi/2$ (dot-dashed line), $\mu=\pi$ (full line) and $\mu=3\pi/2$ (dashed line).}
\end{figure}
\end{rmk*}

\begin{eg*}
For $\chi=0$, set 
$$
\OO^{1,n}:=\OO[a \cdot q- a \cdot q_n], \qquad \OO^{2,n}:=\OO[a \cdot q- a \cdot q_n-2\bfs{\beta}],
$$
where $q_n:=q-4n\pi$. Then one can check that after renormalization, the vertex fields $\wh{\OO}^{1,n}$, $\wh{\OO}^{2,n}$ have conformal dimension $0$ and 
\begin{align}\label{HBB MO}
\begin{split}
M^{1,n}:=\E \wh{\OO}^{1,n}&= \exp\Big( \frac{(x-\mu)^2}{8r} \Big) \exp\Big( -\frac{ (x-4\pi n-\mu)^2 }{ 8r } \Big), 
\\
M^{2,n}:=\E \wh{\OO}^{2,n}&= \exp\Big( \frac{(x-\mu)^2}{8r} \Big) \exp\Big( -\frac{ (x-4\pi n+\mu)^2 }{ 8r } \Big). 
\end{split}
\end{align}
Here notice that by \eqref{Theta_I Gauss}, we have 
$$
\P_{0}^\mu(x)=\sum_{n \in \mathbb{Z}} ( M^{1,n}-M^{2,n} ).
$$
Such martingale-observables \eqref{HBB MO} are utilized in \cite{MR2651436} to calculate $\P_{0}^\mu(x)$. 
\end{eg*}

\begin{rmk*}
For the weight function $\omega(n):= (-1)^{-\frac{n}{2}}$, one can easily observe from \eqref{Theta_I Gauss} that (up to a multiplicative constant)
$$
Z_\beta^\omega(r,x)=\Theta(r,x)^{-\frac12} \Theta_I(2r+2\chi,x-\mu+\pi).
$$
In particular, for $\chi=0$ and $\beta=a/2$, the SLE($4,\Lambda$) associated with this partition function is coupled with GFF having specific height gap on the outer boundary component which obeys Neumann condition on the inner one, see \cite{MR3010393,MR2651436}. 
\end{rmk*}

\ms
\begin{eg*}[Crossing case] Let us consider the case that $p \in \R$, $q+ir \in \R_r$. Then the partition function associated to the one-leg operator \eqref{Psi k=4} is given as 
$$
Z_\beta(r,x) = \Theta_I(r,x)^{-\frac12} \exp\Big( -\frac{ (x-\mu)^2 }{ 8(r+\chi) } \Big), \qquad \mu=2\beta \pi/a.
$$
In particular, if $\chi=\beta=0$, each partition function $Z^n_\beta(x):=Z_\beta(x+2n\pi)$ corresponds to the crossing type SLE$(4,\Lambda)$ connecting two marked points with prescribed winding number, see \cite[Section 6.3]{MR3334276}. Moreover the $2\pi$-periodic partition function 
\begin{equation}
Z_\beta^\omega(r,x):=\Theta_I(r,x)^{-\frac12 } \Theta_I(2r,x+\pi), \qquad \omega(n)\equiv 1
\end{equation}
agrees with Zhan's Feynman-Kac expression of the crossing type annulus partition function, see \cite[Section 8.1]{MR3334276}.
\end{eg*}

\subsection{LERW observables} In this subsection we present a way of constructing martingale-observables for variants of continuum loop-erased random walk (LERW). In the physics literature, a version of CFT constructed from sympletic fermions has been proposed for LERW, see e.g., \cite{majumdar1992exact,bauer2008lerw,caracciolo2004fermionic} and references therein.

In a doubly connected domain, one may consider variants of LERW by imposing boundary conditions on the inner boundary components. 
For instance, let us consider a simple random walk on the lattice approximation of the annulus started from an interior vertex near the initial boundary point conditioned to hit a boundary vertex near the target point. 
We assume that the random walk continues when it hits the inner circle, and stops when it visits the outer one. 
Then after the loop-erasing procedure, the resulting curve is the LERW connecting two marked points on the outer boundary component, obeying the Neumann condition on the inner one. 

For generic LERW, it is well known that the associated partition function $Z$ is expressed as 
\begin{equation}\label{Z LERW Green}
Z(p,q)=\frac{\pa^2}{\pa n_\zeta \pa n_z}\Big|_{\zeta=p, z=q} G(\zeta,z),
\end{equation}
see e.g., \cite[Section 4.5]{zhan2004random}, \cite[Section 9]{dubedat2007commutation} and \cite{makarov2010off}. Here the Green's function $G$ obeys the boundary condition of LERW. 

We now present a way of constructing LERW partition functions from the methods introduced in the previous sections. Then by Theorems~\ref{main_period} and ~\ref{main_SLE Z}, the way of constructing the associated martingale-observables also follows. 

Recall that for $4/\kappa \in \mathbb{Z}_+$, one can choose the integration contour $\gamma$ as a circle encircles $p$ or/and $q$ in the expression of $Z^\omega_\beta$. 
With such a choice of $\gamma$, by residue calculus, we obtain following two-parameter family of solutions $Z_\beta(r,x) \equiv Z_{\beta,\chi}(r,x)$ to null-vector equation \eqref{Lawler Zhan null} for $\kappa=2$:
\begin{equation} \label{Z k=2 nw}
	Z_\beta(r,x)=\Theta(r,x)^{-1} \exp\Big( -\frac{(x-2\beta \pi)^2}{4(r+\chi)} \Big) \Big( H(r,x)+\dfrac{x-2\beta \pi}{r+\chi} \Big) .
\end{equation}
Using the weight functions $\omega(n)= 1, (-1)^n$ and \eqref{Theta_I Gauss}, we obtain following family of solutions having $2\pi$-(anti) periodicity:
\begin{equation} \label{Z k=2 gen}
Z_\beta^\omega(r,x)=
\begin{cases}
\dfrac{ \Theta(r+\chi, x+\pi-2\beta \pi ) }{ \Theta(r,x)} \Big( H(r,x)-H(r+\chi,x+\pi-2\beta \pi) \Big), &\text{if}\quad \omega(n)= 1,
\vspace{0.5em}
\\
\dfrac{ \Theta_I(r+\chi, x+\pi-2\beta \pi ) }{ \Theta(r,x)} \Big( H(r,x)-H_I(r+\chi,x+\pi-2\beta \pi) \Big) &\text{if}\quad \omega(n)= (-1)^n.
\end{cases}
\end{equation} 
We emphasize here that the $2\pi$-periodic partition functions $Z_\beta^\omega$ are positive for all $\beta$. Let us also point out that such partition functions for $\beta \in \{ 0, b\}$ with $\chi=0$ correspond to those found by Zhan in \cite[Section 8.2]{MR3334276}.

\subsubsection{LERW partition functions}
Let us now focus on the case with $\chi=0$. By \eqref{2ir rot period}, such $2\pi$-periodic partition functions further satisfy rotational periodicities with respect to $2ir$;
\begin{equation} 
Z^\omega_\beta(r,x+2ir)= \exp\Big( (2\beta-1) \pi i \Big) Z^\omega_\beta(r,x).
\end{equation}

\medskip 

\noindent $\bullet$ \emph{LERW with Dirichlet/Riemann-Hilbert mixed boundary conditions.}
Let $\omega(n)=1$. Then by \eqref{Z k=2 gen}, we have
\begin{equation} \label{Z LERW RH}
Z_\beta^\omega(r,x)=\dfrac{ \Theta(r,x+\pi-2\beta \pi ) }{ \Theta(r,x)} \Big( H(r,x)-H(r,x+\pi-2\beta \pi) \Big).
\end{equation}
The partition function \eqref{Z LERW RH} describes the continuous LERW with \emph{Riemann-Hilbert} boundary condition, i.e., the requirement that the derivative along the oblique direction (indexed by $\beta$) vanishes. This follows from \eqref{Z LERW Green} and the representation of the Green's function satisfying Riemann-Hilbert boundary condition on the inner boundary component, see Appendix~\ref{Appendix_RH}.

It is well known that Riemann-Hilbert b.c. interpolates Neumann and ER conditions, see e.g., \cite{MR3706737}.
For $\beta=0$, we have an alternative expression 
\begin{equation} \label{Z LERW N}
Z(r,x)=H'(2r,x)-H_I'(2r,x).
\end{equation}
On the other hand, when $\beta \to b$, after normalization, we have 
\begin{equation} \label{Z LERW ER}
Z(r,x)=H'(r,x). 
\end{equation}
These partition functions \eqref{Z LERW N} and \eqref{Z LERW ER} correspond to the partition functions of LERW with Neumann and ER b.c., respectively. 

\medskip 

\noindent $\bullet$ \emph{LERW with Dirichlet/Dirichlet boundary condition.} 
Let $\beta=b$ and $\omega(n)=n$, i.e., 
\begin{equation} \label{LERW_D Psi}
\Psi_\beta^\omega(x):=\sum_{n \in \mathbb{Z}} n \Psi_{ b }(x+2n\pi).
\end{equation}
Then it is straightforward to check that 
\begin{equation}\label{Z LERW D}
Z_\beta^\omega(r,x)=\ti{H}'(r,x)=H'(r,x)+\frac{1}{r}.
\end{equation}
This corresponds to the LERW partition function with Dirichlet boundary condition, which appears in \cite{zhan2008scaling}.

Let us pause here to briefly explain the derivation of \eqref{Z LERW D}.
Note that by \eqref{Z k=2 nw}, 
$$
\E \Psi_{ b }(r,x)= \Theta(r,x)^{-1} \exp\Big( -\frac{(x+ \pi)^2}{4r} \Big)
\Big( H(r,x)+\dfrac{x+ \pi}{r} \Big).
$$
Therefore we have
\begin{align*}
Z_\beta^\omega(r,x)&=\frac{H(r,x)}{\Theta(r,x)} \sum_{n \in \mathbb{Z}} (-1)^n \, n \, \exp\Big(-\frac{(x+\pi+2n\pi)^2}{ 4r } \Big)
\\
&+\frac{1}{\Theta(r,x)} \sum_{n \in \mathbb{Z}} (-1)^n \, n \, \Big(\frac{x+\pi+2n\pi}{r}\Big)\exp\Big(-\frac{(x+\pi+2n\pi)^2}{ 4r } \Big).
\end{align*}
It follows from \eqref{Theta_I Gauss} that 
$$
\sum_{n \in \mathbb{Z}} (-1)^n \, n \, \exp\Big(-\frac{(x+\pi+2n\pi)^2}{ 4r } \Big)=\frac{r}{\pi}\sqrt{\frac{r}{\pi}} \Big( \Theta'(r,x)+\frac{x+\pi}{2r}\Theta(r,x) \Big).
$$
Combining this identity with \eqref{H Theta} and \eqref{heat eq. Theta}, we obtain (up to a multiplicative constant)
\begin{align*}
Z_\beta^\omega(r,x)&=H(r,x)\Big( \frac{\Theta'(r,x)}{\Theta(r,x)}+\frac{x+\pi}{2r} \Big)
-\Big( 2\frac{\Theta''(r,x)}{\Theta(r,x)}+\frac1r +\frac{x+\pi}{r}\frac{\Theta'(r,x)}{\Theta(r,x)} \Big)
\\
&=\Big(\frac12 H^2(r,x)+\frac{x+\pi}{2r}H(r,x)\Big)-\Big(H'(r,x)+\frac12 H^2(r,x)+\frac1r+\frac{x+\pi}{2r}H(r,x)\Big),
\end{align*}
which leads to \eqref{Z LERW D}. 

\begin{rmk*}
 In parallel, one can construct such partition functions for the crossing case that $p \in \R$ and $q \in \R_r$. In particular, for $\beta=b, \chi=0$ with the choice of weight function $\omega(n)=n$, we obtain 
	$Z_\beta^\omega(r,x)=H_I'(r,x)+1/r$, 
	which corresponds to Zhan's Feynmann-Kac solution, see \cite[Section 8.2]{MR3334276}. 
\end{rmk*}

\noindent $\bullet$ \emph{LERW aiming at a side arc.} Due to the special feature $a=1/a$, if we place the charge $a$ at the ``target point" $q$ as usual, the node $q$ can also be utilized as a screening variable.
This fact allows us to describe the one-leg operator for LERW aiming at a marked side arc. 

For a subset $I$ of boundary components ($I \cap p = \emptyset$), set 
\begin{equation}
\Psi(p,I):=\frac{1}{|I|}\int_{I} \Psi_b^\omega (p,q) \, dq, \qquad \omega(n)=n. 
\end{equation}
Then the associated partition function $Z(p,I)=\E \Psi(p,I)$ is given by
\begin{equation} \label{LERW}
Z(p,I):=\frac{1}{|I|}\int_{I} H'(r,p-q) \, dq+\frac1r. 
\end{equation}
This corresponds to the partition function of continuous LERW with a target $I$, see \cite{zhan2008scaling}. 

In particular, when $I=\mathbb{R}_r$, it describes LERW starting from $p$, which stops whenever it hits the inner boundary component. By the periodicity, the associated partition function is given by $Z=\frac1r$.
This can be realized as a partition function for standard SLE$(2)$, see \cite{zhan2008scaling,zhan2004stochastic}.

\subsubsection{Martingale-observables} Let $\Psi$ be the one-leg operator associated with variants of LERWs described above. We present some import martingale-observables. 

\medskip 

\noindent $\bullet$ \emph{Generalized Poisson kernel.} For simplicity we only consider the case that target set is a boundary point $q$. Set 
$\OO:=\OO[a\cdot z-a \cdot q]$. 
Let us define 
\begin{equation}
M(z):=\frac{1}{2i} \int^z_{\bar{z}} \wh{\E} \OO(\zeta)\, d\zeta, \qquad \wh{\E} \OO(z)=\frac{ w'(z) Z(r,w(z)-w(p)) }{ w'(q) Z(r,w(q)-w(p)) }. 
\end{equation}
The scalar field $M$ is called a generalized Poisson kernel. 
For the Dirichlet boundary condition, Zhan considered the following bounded martingale (cf. \eqref{Z LERW D})
\begin{equation}
M_t(z)=\frac{1}{w_t'(q)} \frac{\Im \, \tilde{H}(r-t,w_t(z)) }{\tilde{H}'(r-t,w_t(q) )}
\end{equation}
to show the convergence of LERW, see \cite{zhan2008scaling,zhan2004random}. 

\medskip 

\noindent $\bullet$ \emph{Ending point distribution of standard SLE(2).}
Set $I=[q_1+ir,q_2+ir] \subset \mathbb{R}_r$. Let us write 
$\Psi$ for the one-leg operator associated with the standard SLE(2). 
Set 
$$
\OO=\frac{1}{2\pi}\int_I \OO[a\cdot z-a \cdot q] \, dq.
$$ 
Then the martingale-observable 
$$
M(z)=\wh{\E} \OO = \frac{ \E \Psi \OO }{ \E \Psi }=\frac{1}{2\pi}\int_{q_1}^{q_2} \Big(r H_I'(r,p-q)+1\Big) \, dq
$$
yields the probability that standard SLE(2) ends at $I$, see \cite{zhan2006some}.

\subsection{Critical Ising observables} 

In this subsection we focus on the case $\kappa=3$. 
As in Subsection~\ref{Subsec_NVE msr}, let us denote by $\check{\bfs\beta}$ the (signed) uniform measure on $[ir-\pi,ir+\pi]$ with total mass $-2b$ and let $\bfs \beta= 2b \cdot q+\check{\bfs\beta}$.
Then we have 
$$
Z_{\bfs\beta}(r,x)=\E \Psi_{ \bfs{\beta} }[a\cdot p-a\cdot q]= \Theta(r,x)^{-1} \exp\Big( -\frac{x^2}{4r} \Big), \qquad x=p-q.
$$
The null-vector equation for $Z_{\bfs\beta}$ is given as 
\begin{equation} \label{NVE pre k3}
\pa_r Z_{ \bfs \beta }=\frac{3}{2} Z_{ \bfs \beta }''+H Z_{ \bfs \beta }' +\frac12 H' Z_{ \bfs \beta } +F_{ \bfs\beta } Z_{ \bfs \beta }, 
\end{equation}
where 
\begin{equation}
F_{\bfs \beta}(r,x)=-\Big( \frac{H'}{2}+\frac{H^2}{4} \Big)(r,x) -\frac{x}{2r}\,H(r,x)- \frac{x^2}{4r^2} . 
\end{equation}

Using the weight $\omega(n)=(-1)^n$, set 
\begin{equation}
\Psi(x):=\sum_{ n \in \mathbb{Z} } (-1)^n \Psi_{ \bfs{\beta} }[a\cdot (p+2n\pi)-a\cdot q].
\end{equation}
Then by \eqref{Theta_I Gauss}, we have
\begin{equation} \label{Z Ising}
Z(r,x):=\E \Psi(x)=\sum_{ n \in \mathbb{Z} } (-1)^n Z_{\bfs\beta}(r,x+2n\pi)=\sqrt{\frac{r}{\pi}} \, \frac{\Theta_I(r,x+\pi)}{\Theta(r,x)} .
\end{equation}
This corresponds to the partition function for the critical Ising interface studied by Izyurov \cite{MR3602850}. Therefore the insertion of the field $\Psi$ provides martingale-observables for the continuum limit of the interface curve. 

Since the interface curve satisfies reversibility, it follows from Izyurov's convergence result \cite{MR3602850} and Zhan's theory on commuting annulus SLE that the partition function \eqref{Z Ising} satisfies the null-vector equation \eqref{Lawler Zhan null} with $\kappa=3$. 
Indeed this also follows from the martingale property of parafermionic observable in the continuum limit. 
See \cite{izyurov2020multiple} for verification of BPZ type equations of the partition function in this manner. 

Here we present a field theoretical method to show that $Z$ satisfies \eqref{Lawler Zhan null}.

\begin{prop}
The partition function $Z$ in \eqref{Z Ising} satisfies the null-vector equation \eqref{Lawler Zhan null} with $\kappa=3$.
\end{prop}
\begin{proof}
By \eqref{NVE pre k3}, all we need to show is 
\begin{equation} \label{Z Ising NVE e}
\frac{1}{Z(r,x)}\sum_{n \in \mathbb{Z}} (-1)^n F_{\bfs\beta}(x+2n\pi)Z_{\bfs\beta}(r,x+2n\pi) 
\end{equation}
is a constant depending only on the modular parameter $r$.
By differentiating \eqref{Theta_I Gauss}, we have 
$$
\sum_{n\in \mathbb{Z}} \frac{x+2n\pi}{ 2r } \exp\Big( -\frac{(x+2n\pi)^2}{4r} \Big)= -\sqrt{\frac{r}{\pi}} \Theta_I'(r,x+\pi) 
$$
and 
$$
\sum_{n\in \mathbb{Z}} \frac{(x+2n\pi)^2}{ 4r^2 } \exp\Big( -\frac{(x+2n\pi)^2}{4r} \Big)= \sqrt{\frac{r}{\pi}} \Big(\Theta_I''(r,x+\pi)+\frac{1}{2r}\Theta_I(r,x+\pi)\Big). 
$$
Using these identities, we obtain that \eqref{Z Ising NVE e}  simplifies to 
$$
\frac12 H(r,x)H_I(r,x+\pi)-\Big( \frac{H_I'}{2}+\frac{H_I^2}{4} \Big)(r,x+\pi)-\Big( \frac{H_I'}{2}+\frac{H_I^2}{4} \Big)(r,x+\pi)-\frac{1}{2r}.
$$
Now the proposition follows from \eqref{add H}. 
\end{proof}

\begin{eg*}
Let $\OO_{ \bfs{\beta} }:=\OO_{ \bfs{\beta} }[ (2b-a)\cdot z-(2b-a)\cdot q ]$. 
Then the vertex observable $M(z):=\wh{\E} \OO_{ \bfs{\beta} }$ provides the local martingale
\begin{equation}
M_t(z)= \frac{ w_t'(z)^{\frac12} Z(r-t,w_t(z) ) }{ w_t'(q)^{\frac12} Z(r-t,w_t(q)) }.
\end{equation}
This corresponds to the parafermionic observable utilized in \cite{MR3602850}. 

\end{eg*}

Using a similar idea in previous sections, one can construct further partition functions $Z$ of the form
$$
\frac{\Theta(r,x+\pi-2\beta \pi)}{\Theta(r,x)}, \qquad \frac{\Theta_I(r,x+\pi-2\beta \pi)}{\Theta(r,x)},
$$
which satisfy the null-vector equation \eqref{Lawler Zhan null} and $2ir$-rotational periodicity.
The former has $2\pi$ periodicity, and the latter has $2\pi$-(anti) periodicity. Moreover, the method to construct the chordal type of partition functions in this section can be applied to the crossing case as well.

\subsection{Annulus SLE with one marked point}

One can use the implementation of the measure-valued charges to construct martingale-observables for annulus SLE with one marked point. A typical example of a such process is called the standard annulus SLE($\kappa$) \cite{zhan2004random} with driving process $\xi_t=\sqrt{\kappa}\,B_t$. 

For $k \in \{ 1,\ldots, n \}$, let $q_k:=q-\pi+2\pi \,\frac{k}{n}$, where $q \in \R_r$. We then consider 
\begin{equation}
\Psi_n(p)=\OO \Big[ a\cdot p- \sum_{ k=1 }^n \frac{a}{2n}\cdot q_k, - \sum_{ k=1 }^n \frac{a}{2n}\cdot q_k \Big].
\end{equation}
By \eqref{Lambda bulk}, it follows that the associated drift function $\Lambda_n$ is given by 
\begin{equation}
\Lambda_n(p)=-\frac{1}{2n} \sum_{ k=1 }^n H_\chi(r,p-q_k)+H_\chi(p-\bar{q}_k)
=-\frac{1}{n} \sum_{ k=1 }^n H_I(r,p-\Re \, q_k)+\frac{\Re \, q_k-p}{r+\chi}.
\end{equation}
We denote
\begin{equation}
\Psi(p):=\lim_{n \to \infty} \Psi_n(p)= \OO \Big[ a\cdot p- \frac{a}{2} \cdot \bfs{q}, - \frac{a}{2} \cdot \bfs{q} \Big], \qquad \Lambda:=\lim_{n \to \infty} \Lambda_n=\frac{\Re \,q-p}{r+\chi},
\end{equation}
where $\bfs{q}$ is the uniform measure on $[q-\pi,q+\pi]$ for $q \in \R_r$. Here we use the $2\pi$-periodicity of $H_I(r,\cdot)$.
By Theorem~\ref{main}, the insertion of the one-leg operator $\Psi(p)$ produces martingale-observables for annulus SLE driven by
\begin{equation}
d\xi_t=\sqrt{\kappa}\,dB_t+\frac{\Re\,q-\xi_t}{r+\chi-t}\,dt,\qquad \xi_0=p.
\end{equation} 
Notice that this corresponds to the law of the Brownian bridge. 
In particular, for the ER b.c. case when $\chi=\infty$ (thus $\Lambda \equiv 0$), it corresponds to the standard SLE$(\kappa)$. On the other hand, for the Dirichlet b.c. case when $\chi=0$, the SLE trace ends at the point $q$. 

\medskip 

\begin{eg*}[Level line of GFF]
In the identity chart of $\CC_r$, the associated bosonic observable $M(z):=\E \wh{\Phi}(z)$ is evaluated as 
\begin{equation}
M(z)=2a \arg \Theta_{\chi}(r,z-p)-a \int \arg \Big\{ \Theta_{\chi}(r,z-\bfs q )\Theta_{\chi}(r,z- \bar{ \bfs q} ) \Big\} \,d\bfs q.
\end{equation}
For the Dirichlet b.c., the boundary value of $M$ has discontinuity $2a \pi$ at $p+2n\pi$ and has linear growth with speed $a$ on $\R_r$. To be precise, $ M(z+2\pi)=M(z)+2\lambda$ and 
\begin{equation} 
M(z)= 
\begin{cases}
\hspace{1.5em}-\lambda &\text{if}\quad z \in (0,p),
\\
\hspace{1.5em}+\lambda &\text{if}\quad z \in (p,2\pi),
\\
\lambda \dfrac{\Re\,(z-q)}{\pi} &\text{if}\quad z \in \R_r,
\end{cases}
\end{equation}
where $\lambda=a\pi=\pi/\sqrt{2}$.
\end{eg*}

\appendix

\section{Representation of Green's function} \label{Appendix_RH}

In this appendix we present an analytic representation of Green's function in a doubly connected domain with various boundary conditions.

For a parameter $\beta \in (-1/2,1/2)$, we say a function $F$ defined in a domain $\bar{D}$ satisfies RH$_\beta$ boundary condition (b.c.) on $l \subset \pa D$ if 
\begin{equation} \label{RH condition}
\Big[ \cos(\beta \pi )\pa_n-\sin(\beta \pi) \pa_\tau \Big] F(\cdot)=0 \hspace{1em} \text{on} \hspace{0.5em} l \subset \pa D.
\end{equation}
Here $\pa_n$ is the (inwards) normal derivative and $\pa_\tau$ is the tangential one. In particular, the case $\beta=0$ corresponds to the Neumann b.c. 

Let us define 
\begin{equation}\label{f_beta d}
f_\beta(r,z):=\frac{\Theta'(r,0)}{\Theta(r,\pi-2\beta \pi)} \frac{\Theta(r,z+\pi-2\beta \pi)}{\Theta(r,z)}. 
\end{equation}
By the quasi-periodicities \eqref{theta per} of the theta function, we have 
\begin{equation} \label{f_beta per}
f_\beta(r,z+2\pi)=f_\beta(r,z), \qquad f_\beta(r,z+2ir)=e^{i(2\beta-1)\pi}f_\beta(r,z).
\end{equation}
Since $\Theta(r,\cdot)$ is an odd function, 
\begin{equation} \label{f_beta asym d}
f_\beta(r,z)=-f_{-\beta}(r,-z).
\end{equation}
On the other hand, $f_\beta$ satisfies 
\begin{equation} \label{f_beta res}
f_\beta(r,z)=\frac{1}{z}+\frac{H(r,\pi-2\beta \pi)}{2}+O(z),\qquad \text{as }z\to 0.
\end{equation}
Thus one can observe that $f_0$ has an alternative expression 
\begin{equation} \label{f_0 d}
f_0(r,z)= \frac12 (H(2r,z)-H_I(2r,z)).
\end{equation}
It is also easy to show that $f_\beta(r,\cdot)$ is a conformal map from the cylinder $\mathcal{C}_r$ to the upper half-plane minus a slit whose argument is $(1/2-\beta)\pi$. 

For given $w \in \CC_r$, a meromorphic function 
\begin{equation}
g_\beta(z):=f_\beta(r,z-\bar{w})-f_\beta(r,z-w)
\end{equation}
satisfies following properties:
\begin{itemize}
	\item $g_\beta(z+2\pi)=g_\beta(z)$, $g_\beta(r,z+2ir)=e^{i(2\beta-1)\pi}g_\beta(r,z)$;
	\item $g_\beta(r,\cdot)$ has simple poles at $w$ (resp., $\bar{w}$) with residue $-1$ (resp., $1$);
	\item $g_\beta(r,\cdot)$ maps $\R$ to $i \R$; 
	\item $g_\beta(r,\cdot)$ maps $\R_r$ to a line passing through 0 with angle $\beta$ with $\R$.
\end{itemize}
All of these properties are immediate consequences of \eqref{f_beta per}, \eqref{f_beta asym d} and \eqref{f_beta res}. For instance, the last property follows from that for $x\in \R$,
\begin{align*}
g_\beta(x+ir)&=\overline{f_\beta(r,x-ir-w)}-f_\beta(r,x+ir-w)
\\
&=e^{i(2\beta-1)\pi}\overline{f_\beta(r,x+ir-w)}-f_\beta(r,x+ir-w)
\\
&=-e^{i\beta \pi} \Big(e^{i\beta \pi} \overline{f_\beta(r,x+ir-w)}+ e^{-i\beta \pi} f_\beta(r,x+ir-w) \Big) \in e^{i\beta \pi} \R.
\end{align*}

We denote by $F_\beta(r,\cdot)$ a primitive of $f_\beta(r,\cdot)$, i.e.,
$\pa_z F_\beta(r,z)=f_\beta(r,z)$. 
To our knowledge, for general $\beta$, there is no known expression for $F_\beta(r,z)$ in terms of well-known special functions. On the other hand, by \eqref{f_0 d}, we have 
\begin{equation} \label{F0}
F_0(r,z)=\log \Big(\frac{\Theta(2r,z)}{\Theta_I(2r,z)}\Big)
\end{equation}
up to an additive constant. 

We now present Green's function $G_\beta$ in $\CC_r$ with zero Dirichlet b.c. on $\R$ which satisfies RH$_\beta$ condition on $\R_r$. The associated (non-symmetric) stochastic process is called \emph{obliquely reflected Brownian motion} (ORBM), see \cite{MR3706737} and references therein. We remark that ORBM gives a geometric interpolation between reflected Brownian motion ($\beta=0$) and ERBM ($\beta \to \pm 1/2$). 

We claim that Green's function $G_\beta$ in $\CC_r$ with zero Dirichlet b.c. on $\R$ which satisfies RH$_\beta$ condition on $\R_r$ is expressed as 
\begin{equation} \label{G RH}
G_\beta(z_1,z_2)=\Re\,[ F_\beta(r,z_1-\bar{z}_2)-F_\beta(r,z_1-z_2) ].
\end{equation}
In particular, by \eqref{F0}, Green's function $G_0$ with Dirichlet-Neumann mixed boundary condition is given by 
\begin{equation} \label{Green}
G_0(z_1,z_2)=\log\Big| \frac{\Theta/\Theta_I(2r,z_1-\bar{z}_2)}{\Theta/\Theta_I(2r,z_1-z_2)} \Big|.
\end{equation}
To show \eqref{G RH}, it suffices to check the followings: 
\begin{itemize}
	\item $G_\beta(z_1,z_2)+\log|z_1-z_2|$ is harmonic function in both variables;
	\item $G_\beta(z_1,z_2)=0$ if $z_1$ or $z_2$ is on $\R$;
	\item $G_\beta(\cdot,z_2)$ satisfies RH$_\beta$ condition on $\R_r$.
\end{itemize}
All of these requirements easily follow from the properties presented above. We remark that $G_\beta$ satisfies the asymmetric relation $G_\beta(z_1,z_2)=G_{-\beta}(z_2,z_1)$. 

\subsection*{Acknowledgements} We wish to express our gratitude to Dapeng Zhan for valuable comments and stimulating discussions.

\bibliographystyle{abbrv}
\bibliography{BKT}
\end{document}